\theoremstyle{plain}
\newtheorem{theorem}      {Theorem}      [section]
\newtheorem*{theorem*}    {Theorem \eqref{thm:appl}}
\newtheorem{proposition}  [theorem]  {Proposition}
\theoremstyle{definition}
\newtheorem{example}      [theorem]  {Example}
\newtheorem{remark}       [theorem]  {Remark}
\def \i{\mbox{${\textnormal{i}}$}}
\def \d{\mbox{${\textnormal{d}}$}}
\def \dz{\mbox{${\stackrel{o}{\textnormal{d}}}$}}
\def \Dz{\mbox{${\stackrel{o}{\Delta}}$}}
\def \nz{\mbox{${\stackrel{o}{\nabla}}$}}
\def \tz{\mbox{${\stackrel{o}{\tau}}$}}
\def \t{\mbox{${\mathbb T}$}}
\def \r{\mbox{${\mathbb R}$}}
\def \s{\mbox{${\mathbb S}$}}
\DeclareMathOperator{\trace}{trace} 
\numberwithin{equation}{section}
\begin{document}

\title[Biharmonic homogeneous polynomial maps between spheres]
{Biharmonic homogeneous polynomial maps between spheres}

\author{Rare\c s Ambrosie, Cezar Oniciuc, Ye-Lin Ou}

\address{Faculty of Mathematics\\ Al. I. Cuza University of Iasi\\
Blvd. Carol I, 11 \\ 700506 Iasi, Romania} \email{rares_ambrosie@yahoo.com}

\address{Faculty of Mathematics\\ Al. I. Cuza University of Iasi\\
Blvd. Carol I, 11 \\ 700506 Iasi, Romania} \email{oniciucc@uaic.ro}

\address{Department of Mathematics\\ Texas A $\&$ M University-Commerce\\
Commerce TX 75429 \\ USA} \email{yelin.ou@tamuc.edu}

\thanks{Y.-L. Ou was supported by a grant from the Simons Foundation ($\#427231$, Ye-Lin Ou), which included a travel fund for a short-visit of the second named author to Texas A $\&$ M University-Commerce in 2018 during which a part of the work was done.}

\thanks{Thanks are due to Paul Baird and Andrea Ratto for useful comments and remarks.}

\subjclass[2010]{53C43, 35G20, 15A63}

\keywords{Biharmonic maps, spherical maps, homogeneous polynomial maps}

\begin{abstract}
In this paper we first prove a characterization formula for biharmonic maps in Euclidean spheres and, as an application, we construct a family of biharmonic maps from a flat $2$-dimensional torus $\mathbb{T}$ into the $3$-dimensional unit Euclidean sphere $\mathbb{S}^3$. Then, for the special case of maps between spheres whose components are given by homogeneous polynomials of the same degree, we find a more specific form for their bitension field. Further, we apply this formula to the case when the degree is $2$, and we obtain the classification of all proper biharmonic quadratic forms from $\s^1$ to $\s^n$, $n \geq 2$, from $\s^m$ to $\s^2$, $m \geq 2$, and from $\s^m$ to $\s^3$, $m \geq 2$.
\end{abstract}

\maketitle

\section{Introduction}

Biharmonic maps represent a natural generalization of the well-known harmonic maps. As suggested by J. Eells and J.H. Sampson in  \cite{ES64,ES65}, or J. Eells and L. Lemaire in \cite{EL83}, biharmonic maps $\phi:M^m\to N^n$ between two Riemannian manifolds are critical points of the bienergy functional
$$
E_2:C^\infty(M,N)\to \mathbb{R}, \qquad E_2(\phi) = \frac{1}{2}\int_{M}\left|\tau(\phi)\right|^2 \ v_g,
$$
where $M$ is compact and $\tau(\phi) = \trace\nabla d\phi$ is the tension field associated to $\phi$. In 1986, G.Y. Jiang proved in  \cite{J86,J86-2} that the biharmonic maps are characterized by the vanishing of their bitension field, where the associated bitension field is given by
$$
\tau_2(\phi) = -\Delta\tau(\phi)-\trace R^N\left(d\phi(\cdot),\tau(\phi)\right)d\phi(\cdot).
$$
The equation $\tau_2(\phi) = 0$ is called the biharmonic equation and it is a forth order semilinear elliptic equation.

Any harmonic map is obviously biharmonic, so we are interested in the study of biharmonic maps which are not harmonic, called proper biharmonic. G.Y. Jiang proved in \cite{J86,J86-2}, using a simple Bochner-Weitzenb$\ddot{o}$ck formula, that if $M$ is compact and $N$ has non-positive sectional curvature, then a biharmonic map $\phi$ from $M$ to $N$ has to be harmonic. Thus, the effort has been focused on the study of biharmonic maps in spaces of positive sectional curvature, in particular in Euclidean spheres, spaces of constant positive sectional curvature. In the case of isometric immersions, many examples and classification results of proper biharmonic submanifolds in Euclidean spheres have been obtained (see, for example, \cite{FO22}, \cite{O12} and \cite{OC20}).

Apart from biharmonic isometric immersions, and also conformal immersions, there has been studied the biharmonicity of maps $\phi:M\rightarrow\s^n$ that are originated from harmonic submersions $\psi$ defined on $M$ and with values in a small hypersphere $\s^{n-1}(r)$ of radius $r$ of $\s^n$, and examples of such proper biharmonic maps were obtained considering $\psi$ as being the classical harmonic Hopf fibration $H:\s^{2(n-1)-1}\rightarrow\s^{n-1}(1/\sqrt{2})$, $n = 3$, $5$, $9$, (see \cite{O03}). Inspired by these examples whose components, as functions in $\r^{n+1}$, are homogeneous polynomials of degree $2$, in this paper we study the biharmonicity of the maps $\phi:\s^m\rightarrow\s^n$ whose components are non-harmonic homogeneous polynomials of degree $k$. We note that the homogeneous polynomial maps between Euclidean spheres are very important in the harmonic map theory because, when they are harmonic as functions between Euclidean spaces, they provide all harmonic maps with constant energy density between the corresponding Euclidean spheres of codimension $1$ (see, for example, \cite{BW03} and \cite{ER93}). Especially when $k = 2$, many interesting results have been obtained in \cite{HMX03}, \cite{L85}, \cite{PR90}, \cite{W04}, \cite{WOY14}, \cite{WZ13} etc.

An interesting problem is to study biharmonic maps in certain classes of maps where it is known that there are no harmonic ones. For example, it well known that there exist no full harmonic maps from $\s^4$ to $\s^5$, such  that their components are homogeneous polynomials of degree $2$. By way of contrasts, using an example of a full harmonic map from $\s^4$ to $\s^4$ and the method in \cite{LO07}, we can construct a full biharmonic map $\phi:\s^4\rightarrow\s^5$ of the above type (see Theorem \ref{t45}).

In the first part of our paper, we give a characterisation formula of the biharmonicity in the most general case of maps in Euclidean spheres (see Theorem \ref{t24}). Then, as an application, we start with the standard minimal Clifford torus in $\s^3$ and, performing a simple transformation of the domain, we get a proper biharmonic map in $\s^3$ (see Proposition \ref{p26}). In the next section, we consider the case of maps between spheres such that each of their components is a homogeneous polynomial of degree $k$, and we compute their bitension field (see Theorem \ref{t3}). Further, we particularize Theorem \ref{t3} to the case $k = 2$, i.e. to the quadratic forms, and we obtain a simpler expression of the bitension field (see Theorem \ref{t4}). In the last part of the paper, using the known result of R. Wood for the characterisation of quadratic forms between spheres (see \cite{W68}), we obtain the classification of all proper biharmonic quadratic forms from $\s^1$ to $\s^n$, $n \geq 2$ (see Theorem \ref{t47}), from $\s^m$ to $\s^2$, $m \geq 2$ (see Theorem \ref{t48}), and from $\s^m$ to $\s^3$, $m \geq 2$ (see Theorem \ref{t4.6}).  In order to obtain Theorems \ref{t48} and \ref{t4.6}, as an alternative method, one can use the known results obtained by S. Chang in \cite{C98}, P.Y.H. Yiu in \cite{Y86} and R. Wood in \cite{W68}, to first determine all quadratic forms from $\s^m$ to $\s^2$ and from $\s^m$ to $\s^3$, and then to determine the biharmonic ones. However, our approach is more direct and simpler. We end the paper with an Open Problem concerning the structure of proper biharmonic quadratic forms between spheres.

\textbf{Conventions.} We use the following sign conventions for the rough Laplacian, that acts on the set $C\left(\phi^{-1}TN\right)$ of all sections of the pull-back bundle $\phi^{-1}TN$, and for the curvature tensor field
$$
\Delta \sigma = -\textnormal{trace}_g\nabla^2\sigma, \quad R(X,Y)Z = \nabla_X\nabla_Y Z - \nabla_Y\nabla_YZ -\nabla_{[X,Y]}Z.
$$
Also, by $\s^m(r)$ we indicate the $m$-dimensional Euclidean sphere of radius $r$. When $r = 1$, we write $\s^m$ instead of $\s^m(1)$.

\section{Biharmonic equations for maps between spheres}

First, we recall the following result

\begin{theorem}[\cite{LO07}]\label{t1}
Let M be a compact manifold and consider $\psi: M \rightarrow \s^n(r/\sqrt{2})$ a nonconstant map, where $\s^n(r/\sqrt{2})$ is a small hypersphere of radius $r/\sqrt{2}$ of $\s^{n+1}(r)$. The map $\phi = \i\circ\psi : M\rightarrow \s^{n+1}(r)$, where $\i$ is the canonical inclusion, is proper biharmonic if and only if $\psi$ is harmonic and the energy density $e(\psi)$ is constant.
\end{theorem}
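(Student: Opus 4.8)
The plan is to set up the geometry of a small hypersphere inside $\s^{n+1}(r)$ explicitly and then compute the tension and bitension fields of $\phi = \i\circ\psi$ directly, exploiting the fact that the second fundamental form of a small hypersphere is parallel. First I would write $\s^n(r/\sqrt2)\subset\s^{n+1}(r)$ as the intersection of $\s^{n+1}(r)$ with an affine hyperplane, so that the unit normal $\eta$ of $\s^n(r/\sqrt2)$ in $\s^{n+1}(r)$ is, up to sign, the position-like vector $\eta = (\sqrt2/r)\,\p - (\text{const vector})$; the key structural facts are that $\s^n(r/\sqrt2)$ is totally umbilical in $\s^{n+1}(r)$ with shape operator $A_\eta = -\lambda\,\id$ for a constant $\lambda$ depending on $r$, and that $\eta$ is parallel in the normal bundle. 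Using the composition law for tension fields, $\tau(\phi) = d\i(\tau(\psi)) + \trace\,\nabla d\i(d\psi,d\psi)$, and the Gauss formula $\nabla d\i(X,Y) = \langle A_\eta X,Y\rangle\,\eta$ applied along $\psi$, I get
\begin{equation*}
\tau(\phi) = d\i(\tau(\psi)) - \lambda\,\bigl(\trace\langle d\psi(\cdot),d\psi(\cdot)\rangle\bigr)\,\eta = d\i(\tau(\psi)) - 2\lambda\, e(\psi)\,\eta .
\end{equation*}
Thus $\phi$ is harmonic iff $\psi$ is harmonic and $e(\psi)\equiv 0$, i.e. $\psi$ constant; since $\psi$ is assumed nonconstant, $\phi$ is never harmonic, which is the content of ``proper'' once biharmonicity is established.

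Next I would insert this expression for $\tau(\phi)$ into Jiang's biharmonic equation $\tau_2(\phi) = -\Delta^\phi\tau(\phi) - \trace R^{\s^{n+1}(r)}(d\phi(\cdot),\tau(\phi))d\phi(\cdot) = 0$. The curvature term is handled by the constant-curvature formula $R^{\s^{n+1}(r)}(X,Y)Z = \tfrac1{r^2}(\langle Y,Z\rangle X - \langle X,Z\rangle Z)$; since $d\phi = d\i\circ d\psi$ is tangent to $\s^n(r/\sqrt2)$ while $\eta$ is normal to it, the $\eta$-part of $\tau(\phi)$ contributes $-\tfrac{2}{r^2}e(\psi)\,d\i(\tau(\psi))$-type terms plus a $\eta$-component proportional to $e(\psi)^2$, and the $d\i(\tau(\psi))$-part contributes the usual $-\tfrac{2}{r^2}e(\psi)\,d\i(\tau(\psi))$ after tracing. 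For the rough-Laplacian term I would split $\Delta^\phi$ according to the decomposition $\phi^{-1}T\s^{n+1}(r) = \psi^{-1}T\s^n(r/\sqrt2)\oplus\mathbb R\,\eta$: because $\eta$ is parallel along $\psi$, $\Delta^\phi(e(\psi)\,\eta)$ splits cleanly as a tangential piece $-2\langle\grad e(\psi),d\i(\cdot)\rangle$-type term, a piece $(\Delta e(\psi))\,\eta$, and a piece coming from $\nabla d\i$ that again produces $\|d\psi\|$-type scalars times $\eta$ — in fact a term proportional to $e(\psi)\cdot 2\lambda e(\psi)$ along $\eta$. Collecting everything, the $\eta$-component of $\tau_2(\phi)$ becomes a single scalar multiple of $e(\psi)$ (after using that $\psi$ maps into the sphere of the stated radius, the numerical coefficients combine to something nonzero), while the tangential component is $-\Delta^\psi(\text{stuff}) - (\text{curvature})$ which organizes into a nonzero constant times $\bigl(\text{harmonicity defect of }\psi\bigr)$ plus a gradient term in $e(\psi)$.

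The upshot I expect is: $\tau_2(\phi) = 0$ forces, from the normal component, $e(\psi) \equiv \text{const}$ (the coefficient of $e(\psi)$ in the $\eta$-direction is a nonzero constant — this is where the precise radius $r/\sqrt2$, equivalently $\lambda$, is used), and then from the tangential component, with $e(\psi)$ constant so its gradient vanishes, one is left with a nonzero multiple of $d\i(\tau(\psi)) = 0$, i.e. $\psi$ harmonic; conversely if $\psi$ is harmonic with constant energy density one checks directly that all surviving terms cancel. The main obstacle will be bookkeeping the rough-Laplacian term $\Delta^\phi$ across the orthogonal splitting of the pullback bundle: one must carefully use that the connection on $\phi^{-1}T\s^{n+1}(r)$ restricted to sections of $\psi^{-1}T\s^n(r/\sqrt2)$ differs from the intrinsic connection by the second fundamental form, and that $\eta$ being parallel kills the cross terms — getting the constants exactly right (so that the coefficient of $e(\psi)$ is genuinely nonzero and not accidentally zero for this radius) is the delicate point, and it is precisely what singles out the radius $r/\sqrt2$. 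Since this is essentially the computation carried out in \cite{LO07}, I would organize the proof to isolate these two scalar identities and verify them, rather than re-deriving the general composition formulae from scratch.
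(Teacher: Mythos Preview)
The paper does not actually prove this theorem; it is quoted from \cite{LO07}. The only information the paper gives about the proof is the remark immediately following it: ``the compactness is needed only for the direct implication.'' That remark is precisely the clue your plan is missing.

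Your framework (decompose $\tau_2(\phi)$ along the orthogonal splitting $\phi^{-1}T\s^{n+1}(r)=\psi^{-1}T\s^n(r/\sqrt2)\oplus\mathbb R\eta$, using that $\s^n(r/\sqrt2)$ is totally umbilical with parallel unit normal) is exactly the right one, and your converse would go through. But two points in the direct implication are wrong.

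First, the role of the radius $r/\sqrt2$. At this radius the umbilical constant satisfies $h^2=1/r^2$, and this makes the quadratic $e(\psi)^2$-terms \emph{cancel}, not survive with a nonzero coefficient. Concretely, if $\psi$ is harmonic with $e(\psi)=c$ constant, one finds $\tau_2(\phi)=4hc^2\bigl(\tfrac{1}{r^2}-h^2\bigr)\eta$; this vanishes exactly at the distinguished radius, which is what singles it out in the converse.

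Second, and more seriously, the $\eta$-component of $\tau_2(\phi)=0$ is not a pointwise constraint of the form ``(nonzero constant)$\cdot e(\psi)=0$''. After the cancellation above it reads
\[
2\,\di\theta^\sharp - |\tau(\psi)|^2 - 2\,\Delta e(\psi)=0,
\]
where $\theta(X)=\langle d\psi(X),\tau(\psi)\rangle$. You cannot read off $e(\psi)\equiv\mathrm{const}$ from this, nor can you then get $\tau(\psi)=0$ from the tangential part (which still contains $-\Delta^\psi\tau(\psi)$ and $d\psi(\theta^\sharp)$, not a bare multiple of $\tau(\psi)$). The actual argument integrates the displayed identity over the compact manifold $M$: the divergence and Laplacian terms integrate to zero, forcing $\int_M|\tau(\psi)|^2=0$, hence $\psi$ is harmonic. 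With $\tau(\psi)=0$ the same identity reduces to $\Delta e(\psi)=0$, so $e(\psi)$ is constant on the compact $M$. This integration step is exactly where compactness enters, and only there --- matching the paper's remark. Your plan, which tries to conclude both facts pointwise and in the opposite order, would not close.
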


\begin{remark}
As we can see from the proof, the compactness is needed only for the direct implication.
\end{remark}

\begin{remark}
When the map $\psi$ is an isometric immersion, i.e. $M$ is a submanifold, or a Riemannian submersion, the result was obtained in \cite{CMO02} and \cite{O03}, respectively.
\end{remark}

As an example of the above result, we can consider the map $\psi: \s^3(\sqrt{2}) \rightarrow \s^2(1/\sqrt{2})$ as being the classical Hopf map and then by composing with the inclusion of $\s^2(1/\sqrt{2})$ into $\s^3$, we obtain that
%$\phi:\s^3(\sqrt{2})\rightarrow\s^3$ is a proper biharmonic map (see Oniciuc-2003)
\begin{equation*}
\phi: \s^3(\sqrt{2})=\left\{(z^1,z^2)\in\mathbb{C}^2 \, : \, \left|z^1\right|^2 + \left|z^2\right|^2 = 2 \right\} \rightarrow \s^3
\end{equation*}
given by
\begin{align}\label{ec1}
\phi(z^1,z^2) =& \frac{1}{2\sqrt{2}} \left(2z^1\overline{z^2}, \left|z^1\right|^2 - \left|z^2\right|^2, 2\right) \\
              =& \frac{1}{2\sqrt{2}} \left(2z^1\overline{z^2}, \left|z^1\right|^2 - \left|z^2\right|^2, \left|z^1\right|^2 + \left|z^2\right|^2\right) \nonumber
\end{align}
is a proper biharmonic map (see \cite{O03}). As homothetic changes of the domain or codomain metrics preserves the harmoncity and biharmonicity, we can assume that $\phi$ maps $\s^3$ into $\s^3$. We also note that the components of $\phi$ are (restrictions of) homogeneous polynomials of degree $2$.

We also recall the following non-existence result for the harmonic maps
\begin{theorem}[\cite{WZ13}]
There are no full harmonic maps from $\s^4$ to $\s^n$, for $n=5$, $6$, whose components are homogeneous polynomials of degree $2$.
\end{theorem}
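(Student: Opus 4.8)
The plan is to reduce the statement to the classification of \emph{quadratic eigenmaps} out of $\s^4$. \emph{Step 1: a non-constant harmonic map between spheres whose components are homogeneous of degree $2$ is automatically an eigenmap.} Write $\phi=F|_{\s^4}$ with $F\colon\r^5\to\r^{n+1}$ homogeneous of degree $2$, $F_\alpha(x)=x^\top A_\alpha x$ with $A_\alpha$ symmetric, and $|F(x)|^2=|x|^4$. Decomposing each $A_\alpha$ as its trace-free part plus $\tfrac{\trace A_\alpha}{5}\,I$ yields, on $\s^4$, $\phi=\phi_0+v$, where the components of $\phi_0$ are degree-$2$ spherical harmonics (eigenfunctions of $\Delta^{\s^4}$ with eigenvalue $\lambda=10$) and $v\in\r^{n+1}$ is constant. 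Using $\tau(\phi)=\tau_{\r^{n+1}}(\phi)+|d\phi|^2\phi$, where $\tau_{\r^{n+1}}(\phi)$ is the Euclidean tension field, together with $\tau_{\r^{n+1}}(\phi)=-\lambda(\phi-v)$, the harmonic map equation $\tau(\phi)=0$ becomes
\[
\bigl(|d\phi|^2-\lambda\bigr)\phi=-\lambda\,v .
\]
If $v\neq0$, this forces $\phi=c\,v$ for some scalar function $c$ on $\s^4$, and then $|\phi|\equiv1$ on the connected domain forces $c$, hence $\phi$, to be constant. So for non-constant $\phi$ one has $v=0$; thus $\phi$ is a quadratic eigenmap whose components are degree-$2$ spherical harmonics on $\s^4$, and $|d\phi|^2\equiv\lambda$.

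\emph{Step 2: there is no full quadratic eigenmap $\s^4\to\s^5$ or $\s^4\to\s^6$.} Fullness means the components of $\phi$, which by Step 1 are degree-$2$ spherical harmonics, are linearly independent in the $14$-dimensional space $\mathcal H_2(\s^4)$ of trace-free quadratic forms on $\r^5$, so in particular $n+1\le 14$. Polarizing the identity $\sum_\alpha(x^\top A_\alpha x)^2=|x|^4$ and using $\trace A_\alpha=0$ produces a rigid algebraic system for the $A_\alpha$; for instance $\sum_\alpha A_\alpha^2=\tfrac72 I$ and, for all $y\in\r^5$,
\[
\sum_\alpha (y^\top A_\alpha y)\,A_\alpha+2\sum_\alpha A_\alpha\,yy^\top A_\alpha=|y|^2 I+2\,yy^\top .
\]
One then invokes the classification of quadratic forms between spheres — R. Wood's characterisation \cite{W68}, equivalently the do Carmo--Wallach description of the moduli of degree-$2$ spherical eigenmaps as the faces of a compact convex body — to conclude that the least $n$ for which a full quadratic eigenmap $\s^4\to\s^n$ exists is at least $7$. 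Combined with Step 1, this gives the theorem.

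Step 1 is routine; the substance, and the main obstacle, is Step 2: one must extract from the polarized identities enough rigidity to show that every full family of trace-free symmetric $5\times5$ matrices satisfying them spans a subspace of dimension at least $7$, i.e. that the relevant convex body has no face making the target $\s^5$ or $\s^6$. As noted in the introduction, an alternative is to combine \cite{W68}, \cite{C98} and \cite{Y86} to write down the explicit list of quadratic forms $\s^4\to\s^5$ and $\s^4\to\s^6$ and then to check directly that, by Step 1, none of them is an eigenmap and hence none is harmonic.
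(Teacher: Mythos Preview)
The paper does not give its own proof of this statement: it is quoted verbatim from \cite{WZ13} and used only as motivation for the biharmonic construction in Theorem~\ref{t45}. So there is nothing in the paper to compare your argument against beyond the brief observation in Remark~\ref{r1} that, for $k=2$, a non-constant $\phi$ is harmonic if and only if $\Dz F=0$.

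Your Step~1 is correct and is exactly that observation, carried out in detail: the decomposition $A_\alpha=A_\alpha^0+\tfrac{\trace A_\alpha}{5}I$ gives $\Dz F=-\lambda v$ with $\lambda=10$, and the harmonic equation then reads $(|d\phi|^2-\lambda)\phi=-\lambda v$; the dichotomy argument forcing $v=0$ (hence all $A_\alpha$ trace-free, hence $\phi$ an eigenmap with constant energy density) is fine.

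Step~2, however, is not a proof but a description of what a proof would have to accomplish. You correctly identify the algebraic constraints ($\sum_\alpha A_\alpha^2=\tfrac72 I$ and the polarized identity), but you do not extract from them the lower bound $n\ge 7$; you simply appeal to ``the classification'' in \cite{W68}, \cite{C98}, \cite{Y86} or to the do Carmo--Wallach convex-body picture. None of those references directly hands you the statement that the minimal full target for a quadratic eigenmap of $\s^4$ is $\s^7$; that is precisely the content of \cite{WZ13}, which carries out the delicate linear-algebraic analysis of the trace-free $5\times5$ system. So as written your proposal reduces the theorem to itself. If you want an independent proof, you must actually perform the rank analysis on the polarized identities (or equivalently locate the relevant boundary faces of the do Carmo--Wallach body) rather than invoke the literature at the crucial step.
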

Here a map $\phi:\s^m\rightarrow\s^n$ is said to be full if its image is not contained in any great sphere.

On the other hand, as presented in \cite{C39} (see also \cite{T10}), it is well known that the gradient of the isoparametric function
\begin{align*}
f(x^1,x^2,x^3,x^4,x^5) =& \frac{1}{3}\left(x^1\right)^3 + \frac{x^1}{2}\left(\left(x^3\right)^2 + \left(x^4\right)^2 - 2\left(x^5\right)^2 - 2\left(x^2\right)^2\right)\\
                        & + \frac{\sqrt{3}}{2}\left(x^2\left(x^3\right)^2 - x^2\left(x^4\right)^2\right) + \sqrt{3} x^3x^4x^5.
\end{align*}
gives, up to a homothethic change of the codomain, a full harmonic map $\psi:\s^4\rightarrow\s^4(1/\sqrt{2})$, with constant energy density, defined by
\begin{align*}
\psi(x^1,x^2,x^3,x^4,x^5) =& \frac{1}{\sqrt{2}}\left( \left(x^1\right)^2 + \frac{1}{2}\left(x^3\right)^2 + \frac{1}{2}\left(x^4\right)^2 - \left(x^5\right)^2 - \left(x^2\right)^2,\right. \\
                           & \left. -2x^1x^2 + \frac{\sqrt{3}}{2}\left(x^3\right)^2 - \frac{\sqrt{3}}{2}\left(x^4\right)^2, x^1x^3 + \sqrt{3}x^2x^3 + \sqrt{3}x^4x^5, \right. \\
                           & \left. x^1x^4 - \sqrt{3}x^2x^4 +  \sqrt{3}x^3x^5, -2x^1x^5 +  \sqrt{3}x^3x^4\right).
\end{align*}
Then, for the biharmonic case, using Theorem  \ref{t1} from above we conclude that
\begin{theorem}\label{t45}
There exist full proper biharmonic maps $\s^4$ to $\s^5$, whose components are homogeneous polynomials of degree $2$.
\end{theorem}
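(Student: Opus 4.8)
The plan is to deduce Theorem \ref{t45} directly from Theorem \ref{t1} applied to the explicit harmonic map $\psi:\s^4\to\s^4(1/\sqrt2)$ constructed just above from the Cartan isoparametric function $f$. First I would record that $\psi$ is indeed harmonic with constant energy density; this is the classical statement (see \cite{C39}, \cite{T10}) that the (suitably normalised) gradient map of a Cartan--M\"unzner isoparametric polynomial of degree $3$ on $\s^4$ is an eigenmap, i.e.\ a harmonic map into a sphere with constant energy density, so each component of $\psi$ is a spherical harmonic of the same degree on $\s^4$ and $\psi$ has constant $e(\psi)$. In particular $\psi$ is nonconstant, and its target is the small hypersphere $\s^4(1/\sqrt2)\subset\s^5(1)$ (after the stated homothety; here $r=1$ in the notation of Theorem \ref{t1}, with $n+1=5$).

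Next I would invoke Theorem \ref{t1} with $M=\s^4$ (compact), $n=4$: since $\psi$ is nonconstant, harmonic, and has constant energy density, the composition $\phi=\i\circ\psi:\s^4\to\s^5$ with the canonical inclusion $\i:\s^4(1/\sqrt2)\hookrightarrow\s^5$ is proper biharmonic. It remains to check the two adjectives in the statement, namely that $\phi$ has components that are homogeneous polynomials of degree $2$ and that $\phi$ is full. The first is immediate by inspection of the formula for $\psi$: each of the five components is a homogeneous quadratic polynomial in $x^1,\dots,x^5$, and composing with the inclusion into $\s^5$ does not change them (it only reinterprets the $5$-tuple as lying in $\r^6$ after the evident identification, or one appends no new coordinate since $\s^4(1/\sqrt2)$ sits in $\s^5$ as a hyperplane section whose defining linear relation is built into the normalisation). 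The fullness of $\phi$ reduces to the fullness of $\psi$ as a map into $\s^4(1/\sqrt2)$: $\psi$ is full exactly because the five quadratic components are linearly independent as functions on $\s^4$, which is precisely the content of the classical statement that $f$ is \emph{not} reducible, i.e.\ the Cartan isoparametric function of degree $3$ genuinely lives in dimension $5$; equivalently, $\psi$ is full because the corresponding isoparametric hypersurface family in $\s^4$ is not a product, so no nontrivial linear combination of the components vanishes identically.

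The main obstacle is not any hard computation but rather pinning down the two elementary facts cleanly: (i) that the stated $\psi$ really is harmonic with constant energy density — here I would either cite \cite{C39}, \cite{T10} for the general gradient-map/isoparametric correspondence, or verify directly that $\Delta^{\s^4}\psi^\alpha=\lambda\,\psi^\alpha$ with a common eigenvalue $\lambda$ and that $\sum_\alpha(\psi^\alpha)^2\equiv 1/2$, $\sum_\alpha|\nabla\psi^\alpha|^2\equiv\cst$; and (ii) the fullness, i.e.\ linear independence of the five quadratics on the sphere, which is a finite linear-algebra check. Everything else is formal: once $\psi$ satisfies the hypotheses of Theorem \ref{t1}, the conclusion that $\phi$ is proper biharmonic is automatic, and the degree-$2$, fullness decorations survive the composition with the inclusion. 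I would therefore present the proof as: ``By \cite{C39}, \cite{T10} the map $\psi$ above is harmonic with constant energy density and is full; applying Theorem \ref{t1} gives that $\phi=\i\circ\psi:\s^4\to\s^5$ is proper biharmonic, its components are homogeneous polynomials of degree $2$ by construction, and it is full since $\psi$ is.''
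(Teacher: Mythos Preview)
Your proposal is correct and follows essentially the same route as the paper: the paper simply applies Theorem \ref{t1} to the explicit Cartan gradient map $\psi:\s^4\to\s^4(1/\sqrt2)$, which it has already recorded (citing \cite{C39}, \cite{T10}) as a full harmonic map with constant energy density.

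One small imprecision worth fixing: your parenthetical claim that composing with the inclusion $\i:\s^4(1/\sqrt2)\hookrightarrow\s^5$ ``appends no new coordinate'' is not quite right. The standard inclusion does append a sixth coordinate equal to the constant $1/\sqrt2$, so $\phi=(\psi^1,\dots,\psi^5,1/\sqrt2)$. On $\s^4$ this constant equals $\tfrac{1}{\sqrt2}\bigl((x^1)^2+\cdots+(x^5)^2\bigr)$, hence it is the restriction of a homogeneous polynomial of degree $2$, and the degree-$2$ claim survives. Likewise, for fullness you need one extra observation beyond the fullness of $\psi$: no nontrivial linear combination of $\psi^1,\dots,\psi^5$ can equal a nonzero constant on $\s^4$, since the $\psi^i$ are harmonic quadratics on $\r^5$ while $|x|^2$ is not. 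With these two clarifications your argument is complete and matches the paper's.
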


Now, let $\phi:(M^m,g)\to\s^n$ be a map and let $\i:\s^n\to \r ^{n+1}$ be the standard isometric embedding of the unit Euclidean sphere into Euclidean space. We consider the composition map
$$
\varphi = \i \circ \phi:(M^m,g)\to \r ^{n+1}.
$$
As usual, we identify locally $\d\phi(X)$ with $\d\varphi(X)$, for any vector fields $X$ tangent to $M$.

We denote $\theta = \left\langle\d \phi, \tau(\phi)\right\rangle = \left\langle\d \varphi, \tau(\varphi)\right\rangle$, i.e. $\theta$ is the $1$-form on $M$ given by
$$
\theta(X) = \left\langle\d \phi(X), \tau(\phi)\right\rangle = \left\langle\d \varphi(X), \tau(\varphi)\right\rangle.
$$

By $\theta^\sharp$ we denote the vector field associated to the $1$-form $\theta$ through the musical isomorphism. With these notations, we can state:

\begin{theorem}\label{t24}
Let $\phi:(M^m,g)\to \s^n$ be a map and let $\i:\s^n\to \r ^{n+1}$ be the standard isometric embedding. Then, $\phi$ is a biharmonic map if and only if the vector function $\varphi = \i \circ \phi:(M^m,g)\to \r ^{n+1}$ solves the following PDE
\begin{equation}\label{eq1}
\tau_2(\varphi) + 2|\d  \varphi|^2 \tau(\varphi) + \left( -\Delta |\d  \varphi|^2 + 2\textnormal{div}\theta^\sharp - |\tau(\varphi)|^2 + 2|\d \varphi|^4 \right ) \varphi + 2\d \varphi\left(\textnormal{grad}|\d \varphi|^2 \right) = 0.
\end{equation}
\end{theorem}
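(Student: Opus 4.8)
The plan is to translate every quantity attached to $\phi:M\to\s^n$ into a statement about the $\r^{n+1}$-valued map $\varphi=\i\circ\phi$, using that $\i$ is a totally umbilic isometric embedding whose second fundamental form is $B(X,Y)=-\langle X,Y\rangle\varphi$ (the position vector $\varphi$ playing the role of the outward unit normal). First I would record the composition formula for the tension field, which gives the decomposition of $\tau(\varphi)$ into its parts tangent and normal to $\s^n$,
\[
\tau(\varphi)=\tau(\phi)-|\d\varphi|^2\varphi ,
\]
and hence $|\d\phi|^2=|\d\varphi|^2$, $\langle\tau(\varphi),\varphi\rangle=-|\d\varphi|^2$, $|\tau(\phi)|^2=|\tau(\varphi)|^2-|\d\varphi|^4$, and $\theta(X)=\langle\d\varphi(X),\tau(\varphi)\rangle$ (because $\d\varphi(X)\perp\varphi$). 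The matching rule for a section $V$ of $\phi^{-1}T\s^n$, viewed as an $\r^{n+1}$-valued map with $\langle V,\varphi\rangle=0$, is $\overline{\nabla}_XV=\nabla^\phi_XV-\langle\d\varphi(X),V\rangle\varphi$, where $\overline{\nabla}$ denotes the (flat) pull-back connection on $\varphi^{-1}T\r^{n+1}$ and $\overline{\Delta}$ the associated rough Laplacian.

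The core step is to rewrite $\Delta\tau(\phi)$, the rough Laplacian on $\phi^{-1}T\s^n$, in terms of $\overline{\Delta}$. Iterating the splitting above with $V=\tau(\phi)$ and tracing over a local orthonormal frame $\{e_i\}$ geodesic at the point, I expect to get
\[
\Delta\tau(\phi)=\overline{\Delta}\tau(\phi)-\Big(|\tau(\phi)|^2+2\sum_i\langle\d\varphi(e_i),\nabla^\phi_{e_i}\tau(\phi)\rangle\Big)\varphi-\d\varphi(\theta^\sharp),
\]
and then eliminate the cross term using $\di\theta^\sharp=|\tau(\varphi)|^2+\sum_i\langle\d\varphi(e_i),\overline{\nabla}_{e_i}\tau(\varphi)\rangle$ together with the tension splitting, which yields $\sum_i\langle\d\varphi(e_i),\nabla^\phi_{e_i}\tau(\phi)\rangle=\di\theta^\sharp-|\tau(\varphi)|^2+|\d\varphi|^4$. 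Next I would compute $\overline{\Delta}\tau(\phi)=\overline{\Delta}\tau(\varphi)+\overline{\Delta}(|\d\varphi|^2\varphi)$: since $\r^{n+1}$ is flat, $\overline{\Delta}\tau(\varphi)=-\tau_2(\varphi)$, while the Leibniz rule (with $\overline{\nabla}_X\varphi=\d\varphi(X)$ and $\sum_i\overline{\nabla}_{e_i}\d\varphi(e_i)=\tau(\varphi)$) gives $\overline{\Delta}(|\d\varphi|^2\varphi)=(\Delta|\d\varphi|^2)\varphi-2\d\varphi(\grad|\d\varphi|^2)-|\d\varphi|^2\tau(\varphi)$.

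Finally I would combine these with Jiang's formula $\tau_2(\phi)=-\Delta\tau(\phi)-\trace R^{\s^n}(\d\phi(\cdot),\tau(\phi))\d\phi(\cdot)$, where the constant-curvature identity $R^{\s^n}(X,Y)Z=\langle Y,Z\rangle X-\langle X,Z\rangle Y$ makes the curvature term equal to $\d\varphi(\theta^\sharp)-|\d\varphi|^2\tau(\varphi)-|\d\varphi|^4\varphi$. Substituting everything, the two occurrences of $\d\varphi(\theta^\sharp)$ cancel, the $\tau(\varphi)$-terms add up to $2|\d\varphi|^2\tau(\varphi)$, and the coefficients of $\varphi$ collect (using $|\tau(\phi)|^2=|\tau(\varphi)|^2-|\d\varphi|^4$) into $-\Delta|\d\varphi|^2+2\di\theta^\sharp-|\tau(\varphi)|^2+2|\d\varphi|^4$; thus $\tau_2(\phi)$ equals the left-hand side of \eqref{eq1}, and $\phi$ is biharmonic precisely when \eqref{eq1} holds. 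I expect the only real difficulty to be the bookkeeping: every differentiation produces new tangential and $\varphi$-normal contributions, and one has to keep straight which Laplacian ($\Delta$ on $\phi^{-1}T\s^n$, $\overline{\Delta}$ on $\varphi^{-1}T\r^{n+1}$, or the scalar Laplacian on functions) and which connection is in play at each stage. No single computation is deep, but the terms proliferate and sign slips are easy; carrying out the whole calculation at a point equipped with a geodesic frame, and systematically grouping terms into "tangent to $\s^n$" and "multiple of $\varphi$", should keep it under control.
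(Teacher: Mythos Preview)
Your proposal is correct and follows essentially the same approach as the paper: both proofs use the totally umbilic embedding $\i:\s^n\hookrightarrow\r^{n+1}$ and the relation $\nabla^\phi_X V=\overline{\nabla}_XV+\langle\d\varphi(X),V\rangle\varphi$ to expand $\Delta^\phi\tau(\phi)$ in terms of the flat connection, then combine with the constant-curvature formula for $R^{\s^n}$. The only difference is organizational: the paper substitutes $\tau(\phi)=\tau(\varphi)+|\d\varphi|^2\varphi$ at the outset and expands $\nabla^\phi_{X_i}\nabla^\phi_{X_i}\tau(\phi)$ directly in terms of $\tau(\varphi)$, whereas you first relate $\Delta\tau(\phi)$ to $\overline{\Delta}\tau(\phi)$ and then split $\overline{\Delta}\tau(\phi)=\overline{\Delta}\tau(\varphi)+\overline{\Delta}(|\d\varphi|^2\varphi)$ as a separate step; the resulting identities and the final collection of terms are identical.
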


\begin{proof}
It is well-known that the standard isometric embedding $\i:\s^n\to \r ^{n+1}$ is a totally umbilical hypersurface with the unit normal vector field $r$. The vector field $r$ associates to any point the corresponding position vector.

Since $\nabla_U^{\r ^{n+1}}r = U$, for any $U\in C(T\r ^{n+1})$, we get that the second fundamental form and the shape operator of $\s ^n$ into $\r ^{m+1}$ are given by
$$
B(X,Y) = -\left\langle X,Y \right\rangle  r\quad\textnormal{and}\quad A_r(X) = A(X) = -X,
$$
for any $X,Y\in C(T\s ^n)$.

Let $\phi:M\to \s ^n$ be a smooth map and $\varphi = \textnormal{i}\circ \phi:M\to \r ^{n+1}$ the composition map.
We note that if $\sigma \in C(\phi^{-1}T\s ^n),$ then di$(\sigma) \in C(\varphi^{-1}T\r ^{n+1})$ and
\begin{equation}\label{eq2}
\nabla_X^\varphi \textnormal{di}(\sigma) = \textnormal{di}\left(\nabla_X^\phi\sigma\right) - \left\langle \d\phi(X),\sigma\right\rangle r\circ\varphi.
\end{equation}

In order to simplify the notation, we will not mention anymore the action of di and we will write $\varphi$ instead of $ r\circ\varphi.$

From the Equation \eqref{eq2} we quickly deduce that
\begin{equation}\label{eq3}
\tau(\varphi) = \tau(\phi) - |\d \phi|^2\varphi = \tau(\phi) - |\d \varphi|^2\varphi.
\end{equation}

In order to compute $\tau_2(\phi)$ in terms of $\varphi$, we consider an arbitrary point $p\in M$ and a geodesic frame field $\left\{X_i\right\}^m_{i=1}$ around $p$. From \eqref{eq2}, around $p$ we have
\begin{equation}\label{eq4}
\nabla^\phi_{X_i} \tau(\phi) = \nabla^\varphi_{X_i}\tau(\phi) + \left\langle \d \phi(X_i), \tau(\phi)\right\rangle \varphi.
\end{equation}

Using the Equation \eqref{eq3}, the Equation \eqref{eq4} becomes
\begin{align}\label{eq5}
\nabla_{X_i}^\phi\tau(\phi) &= \nabla_{X_i}^\varphi\left\{ \tau(\varphi) + |\d \varphi|^2\varphi \right\} + \theta(X_i)\varphi\\
                            &= \nabla_{X_i}^\varphi\tau(\varphi) + \left\{X_i\left(|\d \varphi|^2\right)\right\}\varphi+|\d \varphi|^2 \d \varphi(X_i) + \left\langle X_i, \theta^\sharp\right\rangle\varphi. \nonumber
\end{align}

From Equation \eqref{eq2} with $\sigma = \nabla^\phi_{X_i}\tau(\phi)$, around $p$, we have
\begin{equation}\label{eq6}
\nabla^\phi_{X_i}\nabla^\phi_{X_i}\tau(\phi) = \nabla^\varphi_{X_i} \nabla^\phi_{X_i}\tau(\phi) + \left\langle\d \phi(X_i), \nabla^\phi_{X_i}\tau(\phi)\right\rangle\varphi.
\end{equation}

Replacing the Equation \eqref{eq5} in the Equation \eqref{eq6} we get
\begin{align}\label{eq7}
\nabla^\phi_{X_i}\nabla^\phi_{X_i} \tau(\phi) =& \nabla^\varphi_{X_i} \left\{ \nabla^\varphi_{X_i}\tau(\varphi) + \left( X_i\left(|\d \varphi|^2\right) \right)\varphi + |\d \varphi|^2\d \varphi(X_i)+\left\langle X_i,\theta^\sharp\right\rangle\varphi\right\} \nonumber \\
                                               & + \left\langle\d \phi(X_i), \nabla^\varphi_{X_i}\tau(\varphi) + \left( X_i\left(|\d \varphi|^2\right) \right)\varphi + |\d  \varphi|^2\d \varphi(X_i) + \left\langle X_i,\theta^\sharp\right\rangle\varphi \right\rangle\varphi \\
                                              =& \nabla^\varphi_{X_i}\nabla^\varphi_{X_i}\tau(\varphi) + \left( X_i X_i \left(|\d \varphi|^2\right) \right)\varphi + 2 \left(X_i \left(|\d \varphi|^2\right) \right)\d \varphi(X_i) + |\d \varphi|^2\nabla^\varphi_{X_i}\d \varphi(X_i) \nonumber\\
                                               & + \left( \left\langle\nabla_{X_i}X_i,\theta^\sharp\right\rangle + \left\langle X_i, \nabla_{X_i}\theta^\sharp\right\rangle \right)\varphi + \left\langle X_i,\theta^\sharp\right\rangle\d \varphi(X_i)\nonumber \\
                                               & + \left( X_i\left\langle\d \phi(X_i),\tau(\phi)\right\rangle - \left\langle\nabla^\varphi_{X_i}\d \varphi(X_i),\tau(\varphi)\right\rangle \right)\varphi + |\d \varphi|^2|\d \varphi(X_i)|^2\varphi.\nonumber
\end{align}
Taking the sum in Equation \eqref{eq7}, at $p$ we get
\begin{align*}
-\Delta^\phi\tau(\phi) =& -\Delta^\varphi\tau(\varphi) - \Delta\left(|\d \varphi|^2\right)\varphi + 2\d \varphi\left(\textnormal{grad}\left(|\d \varphi|^2\right)\right) + |\d \varphi|^2\tau(\varphi) \\
                        & + \left(\textnormal{div}\theta^\sharp\right)\varphi + \d \varphi(\theta^\sharp) + \left(\textnormal{div}\theta^\sharp\right)\varphi - |\tau(\varphi)|^2\varphi + |\d \varphi|^4\varphi \\
                       =& \tau_2(\varphi) +|\d \varphi|^2\tau(\varphi)+ \\
                        & + \left\{-\Delta\left(|\d \varphi|^2\right) + 2\left(\textnormal{div}\theta^\sharp\right) - |\tau(\varphi)|^2 + |\d \varphi|^4\right\}\varphi \\
                        & + 2\d \varphi\left(\textnormal{grad}\left(|\d \varphi|^2\right)\right) + \d \varphi(\theta^\sharp).
\end{align*}
Finally, replacing $\tau(\phi)$ and $\Delta^\phi\tau(\phi)$ in the expression of $\tau_2(\phi)$, we get
\begin{align*}
\tau_2(\phi) =& -\Delta^\phi\tau(\phi) - \textnormal{trace}R^{\s^n}\left(\d \phi, \tau(\phi)\right)\d \phi \\
             =& -\Delta^\phi\tau(\phi) - \sum_i\left\{ \left\langle\d \phi(X_i),\tau(\phi)\right\rangle\d \phi(X_i) - \left\langle\d \phi(X_i),\d \phi(X_i)\right\rangle\tau(\phi)\right\}\\
             =& -\Delta^\phi\tau(\phi)-\d \phi(\theta^\sharp) + |\d \varphi|^2\tau(\varphi)+|\d \varphi|^4\varphi.
\end{align*}
We conclude that
\begin{align}\label{ec3}
\tau_2(\phi) =& \tau_2(\varphi) +2|\d \varphi|^2\tau(\varphi)+ \nonumber\\
              & + \left\{-\Delta\left(|\d \varphi|^2\right) + 2\left(\textnormal{div }\theta^\sharp\right) - |\tau(\varphi)|^2 + 2|\d \varphi|^4\right\}\varphi\\
              & + 2\d \varphi\left(\textnormal{grad}\left(|\d \varphi|^2\right)\right),\nonumber
\end{align}
and the conclusion follows.
\end{proof}

\begin{remark}
We note that Equation \eqref{eq1} first appeared in \cite{OY18} and then, a version of this formula was given in \cite{BO19}. A similar form of Equation \eqref{eq1} was obtained in \cite{K08} and \cite{W04} for maps from Euclidean domains in spheres, and in \cite{CMO02} for isometric immersions in Euclidean spheres.
\end{remark}

As an application to Equation \eqref{eq1} we will give the follwing example. Let $\Psi: \s^1\times\s^1\to\s^3$ be a smooth map defined by $$\Psi(s,t)=\frac{1}{\sqrt2}(\cos s, \sin s, \cos t, \sin t).$$ This map is well-defined, harmonic, and
$$
\Psi^*g_{\s^3}=\frac{1}{2}g_\t,
$$
where $\t$ denotes $\s^1\times\s^1$ and $g_\t$ the standard metric, $g_\t = \d s^2 +\d t^2$. In fact, $\Psi$ represents, up to a homothetic change of domain metric, the standard minimal Clifford torus in $\s^3$.

In the following, we will perform a simple transformation of the domain in order to render the harmonic map $\Psi$ into a proper biharmonic map. More precisely, we will compose the map $\Psi$ with a totally geodesic map $G:\t\rightarrow\t$ such that $\Psi\circ G$ will be biharmonic but not harmonic.

Consider the map $G:\t\to\t$ given in standard coordinates by
$$
G(u,v)=(k_1u+k_2v,l_1u+l_2v),
$$
where $k_1,k_2,l_1,l_2 \in \mathbb{Z}$. The map $G$ is well-defined, non-necessarily injective, and totally geodesic. We have
$$
G_u = \d G(\partial_u) =  k_1\partial_s+l_1\partial_t \quad\textnormal{and}\quad G_v = \d G(\partial_v) =  k_2\partial_s+l_2\partial_t.
$$
Thus
\begin{align*}
  &\begin{cases}
    |G_u|^2 & = k_1^2+l_1^2\\
    |G_v|^2 & = k_2^2+l_2^2\\
    \left\langle G_u,G_v\right\rangle & = k_1k_2+l_1l_2.
   \end{cases}
\end{align*}

We note that the metric  $G^*g_\t$ is homothetic to $g_\t$ if and only if $k_1^2+l_1^2 = k_2^2+l_2^2 > 0$ and $k_1k_2+l_1l_2 = 0$, that is the vectors $(k_1,l_1)$ and $(k_2,l_2)$ have the same length and are orthogonal, i.e. $(k_2,l_2) = \pm(-l_1,k_1)$. In particular, if $G^*g_\t$ is homothetic to $g_\t$, then $l_1^2 + l_2^2 = k_1^2 + k_2^2$.

Denote $\phi:=\Psi\circ G$ and $\varphi:=\i\circ\phi:\t\to\r ^4,$
$$
\varphi(u,v) = \frac{1}{\sqrt2}\left(\cos(k_1u + k_2v),\sin(k_1u + k_2v),\cos(l_1u + l_2v),\sin(l_1u + l_2v)\right).
$$

With the above notation, we have that
\begin{proposition}\label{p26}
The map $\phi:\t\to\s^3$ is proper biharmonic if and only if
$$
l_1^2+l_2^2 \neq k_1^2+k_2^2.
$$
\end{proposition}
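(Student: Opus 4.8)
The plan is to apply Theorem \ref{t24}: $\phi$ is biharmonic if and only if the $\r^4$-valued map $\varphi$ solves Equation \eqref{eq1}, so I would simply evaluate every term of \eqref{eq1} on the explicit $\varphi(u,v) = \tfrac{1}{\sqrt2}(\cos a,\sin a,\cos b,\sin b)$, where $a = k_1u+k_2v$ and $b = l_1u+l_2v$. Since $(\t,g_\t)$ is the flat torus ($g_\t = \d u^2 + \d v^2$) and the codomain $\r^4$ is flat, all the relevant covariant derivatives collapse to ordinary partial derivatives: $\tau(\varphi) = \varphi_{uu}+\varphi_{vv}$, $\tau_2(\varphi) = -\Delta\tau(\varphi) = \tau(\varphi)_{uu}+\tau(\varphi)_{vv}$ (the curvature term in $\tau_2(\varphi)$ vanishes because $R^{\r^4}=0$), and $\textnormal{div}\,\theta^\sharp = \partial_u(\theta(\partial_u))+\partial_v(\theta(\partial_v))$. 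I would organize the bookkeeping through the pointwise orthonormal frame $e_1=(-\sin a,\cos a,0,0)$, $e_2=(0,0,-\sin b,\cos b)$, $f_1=(\cos a,\sin a,0,0)$, $f_2=(0,0,\cos b,\sin b)$ along $\varphi$: there $\varphi = \tfrac{1}{\sqrt2}(f_1+f_2)$, and differentiation in $a$ rotates $f_1\mapsto e_1\mapsto -f_1$, similarly for $(e_2,f_2)$ in $b$.

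Carrying this out, with $K := k_1^2+k_2^2$ and $L := l_1^2+l_2^2$, I would find $\varphi_u = \tfrac{1}{\sqrt2}(k_1e_1+l_1e_2)$ and $\varphi_v = \tfrac{1}{\sqrt2}(k_2e_1+l_2e_2)$, hence $|\d\varphi|^2 = \tfrac12(K+L)$ is constant — so the terms $-\Delta|\d\varphi|^2$ and $2\,\d\varphi(\textnormal{grad}|\d\varphi|^2)$ in \eqref{eq1} drop out. Next, $\tau(\varphi) = -\tfrac1{\sqrt2}(Kf_1+Lf_2)$, so $|\tau(\varphi)|^2 = \tfrac12(K^2+L^2)$ is also constant, and $\theta = \langle \d\varphi,\tau(\varphi)\rangle \equiv 0$ because each $e_i$ is orthogonal to each $f_j$; hence the $\textnormal{div}\,\theta^\sharp$ term drops as well. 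Finally $\tau_2(\varphi) = \tfrac1{\sqrt2}(K^2f_1+L^2f_2)$. With these, the scalar coefficient of $\varphi$ in \eqref{eq1} is $-|\tau(\varphi)|^2 + 2|\d\varphi|^4 = KL$, and the whole left-hand side of \eqref{eq1} becomes $\tfrac1{\sqrt2}\big[(K^2-(K+L)K+KL)f_1 + (L^2-(K+L)L+KL)f_2\big] = 0$. Thus \eqref{eq1} is satisfied identically, i.e. $\phi$ is biharmonic for every choice of the integers $k_1,k_2,l_1,l_2$.

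It then remains to decide when $\phi$ is harmonic. Using \eqref{eq3} I would compute $\tau(\phi) = \tau(\varphi) + |\d\varphi|^2\varphi = \tfrac{L-K}{2\sqrt2}(f_1-f_2)$, which vanishes if and only if $K = L$, i.e. $k_1^2+k_2^2 = l_1^2+l_2^2$, since $f_1$ and $f_2$ are everywhere linearly independent. Combining the two facts, $\phi$ is proper biharmonic precisely when $l_1^2+l_2^2 \neq k_1^2+k_2^2$, which is the assertion. I do not expect a genuine obstacle here — the computation is mechanical once the frame is fixed; the only points to be careful about are the sign and normalization conventions (so that the curvature term really disappears and the coefficient of $\varphi$ comes out exactly $KL$), and, conceptually, the fact that $\phi = \Psi\circ G$ need not be harmonic even though $\Psi$ is harmonic and $G$ is totally geodesic, because $G$ is not a local isometry and weights the two circle directions of the target torus unequally.
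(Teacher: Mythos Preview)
Your proposal is correct and follows essentially the same approach as the paper: both verify Equation \eqref{eq1} term by term for the explicit $\varphi$ and then compute $\tau(\phi)$ via \eqref{eq3} to isolate the harmonicity condition. Your bookkeeping through the moving orthonormal frame $\{e_1,e_2,f_1,f_2\}$ is a bit cleaner than the paper's component-by-component computation, and your observation that $\theta\equiv 0$ (because $e_i\perp f_j$) is marginally sharper than the paper's $\textnormal{div}\,\theta^\sharp=0$, but the substance is identical.
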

\begin{proof}
By direct computations we obtain
\begin{align*}
    &\begin{cases}
      |\varphi_u|^2 & = \frac{1}{2}(k_1^2+l_1^2), \\
      |\varphi_v|^2 & = \frac{1}{2}(k_2^2+l_2^2), \\
      \left\langle\varphi_u,\varphi_v\right\rangle & = \frac{1}{2}(k_1k_2+l_1l_2),
    \end{cases} \\
    &|\d \varphi|^2  =  |\varphi_u|^2+|\varphi_u|^2 = \frac{1}{2}(k_1^2+l_1^2+k_2^2+l_2^2).
\end{align*}
Now,
\begin{align*}
\Delta\varphi  =& \frac{1}{\sqrt2}\left((k_1^2+k_2^2)\cos(k_1u+k_2v),(k_1^2+k_2^2)\sin(k_1u+k_2v),\right. \\
                &  \left.(l_1^2+l_2^2)\cos(l_1u+l_2v),(l_1^2+l_2^2)\sin(l_1u+l_2v)\right).
\end{align*}
As a result,
\begin{equation*}
|\Delta\varphi|^2 =\frac{1}{2}\left((k_1^2+k_2^2)^2+(l_1^2+l_2^2)^2\right)
\end{equation*}
Then,
\begin{equation*}
\tau(\varphi)  = \tau(\phi) - |\d \varphi|^2\varphi = \tau(\phi) - \frac{1}{2}(k_1^2+l_1^2+k_2^2+l_2^2)\varphi,
\end{equation*}
and thus
\begin{align*}
\tau(\phi) =& -\frac{1}{\sqrt{2}}\left(\frac{1}{2}(l_1^2+l_2^2-k_1^2-k_2^2)\cos(k_1u+k_2v),\frac{1}{2}(l_1^2+l_2^2-k_1^2-k_2^2)\sin(k_1u+k_2v),\right. \\
            & \left.\frac{1}{2}(k_1^2+k_2^2-l_1^2-l_2^2)\cos(l_1u+l_2v),\frac{1}{2}(k_1^2+k_2^2-l_1^2-l_2^2)\sin(l_1u+l_2v)\right)\\
           =& -\frac{l_1^2+l_2^2-k_1^2-k_2^2}{2\sqrt{2}}\left(\cos(k_1u+k_2v),\sin(k_1u+k_2v),\right. \\
            & \left.-\cos(l_1u+l_2v),-\sin(l_1u+l_2v)\right).
\end{align*}
It follows
$$
|\tau(\phi)|^2 = \frac{\left(l_1^2+l_2^2-k_1^2-k_2^2\right)^2}{4}.
$$
Thus, $\tau(\phi)=0$ if and only if
\begin{equation}\label{eq8}
l_1^2+l_2^2 = k_1^2+k_2^2.
\end{equation}
By direct computations, we obtain
\begin{align*}
\Delta^2\varphi =& \frac{1}{\sqrt2}\left((k_1^2+k_2^2)^2\cos(k_1u+k_2v),(k_1^2+k_2^2)^2\sin(k_1u+k_2v),\right. \\
                 & \left.(l_1^2+l_2^2)^2\cos(l_1u+l_2v),(l_1^2+l_2^2)^2\sin(l_1u+l_2v)\right).
\end{align*}
Then
\begin{align*}
\textnormal{div }\theta^\sharp =& \partial_u\left\langle\varphi_u,-\Delta\varphi\right\rangle + \partial_v\left\langle\varphi_v,-\Delta\varphi\right\rangle\\
                               =& \partial_u\left\langle\varphi_u,\tau(\phi)-|\d \varphi|^2\varphi\right\rangle + \partial_v\left\langle\varphi_v,\tau(\phi)-|\d \varphi|^2\varphi\right\rangle\\
                               =& \partial_u\left(\left\langle\varphi_u,\tau(\phi)\right\rangle - |\d \varphi|^2\left\langle\varphi_u,\varphi\right\rangle\right) + \partial_v\left(\left\langle\varphi_v,\tau(\phi)\right\rangle - |\d \varphi|^2\left\langle\varphi_v,\varphi\right\rangle\right)\\
                               =& 0.
\end{align*}
Next, by replacing in the left hand side of Equation \eqref{eq1} and by direct computation we obtain
\begin{align*}
&\frac{1}{\sqrt2}\left( (k_1^2+k_2^2)^2\cos(k_1u+k_2v),(k_1^2+k_2^2)^2\sin(k_1u+k_2v),\right. \\
&\left.(l_1^2+l_2^2)^2\cos(l_1u+l_2v),(l_1^2+l_2^2)^2\sin(l_1u+l_2v) \right) \\
&- \frac{k_1^2+l_1^2+k_2^2+l_2^2}{\sqrt 2}\left((k_1^2+k_2^2)\cos(k_1u+k_2v),(k_1^2+k_2^2)\sin(k_1u+k_2v),\right.\\
&\left.(l_1^2+l_2^2)\cos(l_1u+l_2v),(l_1^2+l_2^2)\sin(l_1u+l_2v)\right) \\
&+ \left(-\frac{1}{2}\left((k_1^2+k_2^2)^2+(l_1^2+l_2^2)^2\right) + \frac{1}{2}(k_1^2+l_1^2+k_2^2+l_2^2)^2\right)\cdot\\ &\cdot\frac{1}{\sqrt2}\left(\cos(k_1u+k_2v),\sin(k_1u+k_2v),\cos(l_1u+l_2v),\sin(l_1u+l_2v)\right) = 0.
\end{align*}
Therefore the map $\phi$ is biharmonic. Taking into account \eqref{eq8}, we conclude that $l_1^2+l_2^2 \neq k_1^2+k_2^2$ if and only if $\phi$ is proper biharmonic.
\end{proof}
\begin{remark}
We note that if $k_1l_2 - k_2l_1 = \pm 1$ and $l_1^2+l_2^2 \neq k_1^2+k_2^2$, then $G$ is a diffeomorphism and $\phi$ is proper biharmonic..
\end{remark}

\bigskip

\section{Biharmonic homogeneous polynomial maps}

Next, we give another application of Theorem \ref{t24} for a particular class of maps. Consider the diagram below
\bigskip
\begin{center}
  \begin{tikzpicture}
  \matrix (m) [matrix of math nodes,row sep=3em,column sep=4em,minimum width=2em]
  {
     \r^{m+1} & \r^{n+1} \\
     \s^m & \s^n \\};
  \path[-stealth]
    (m-2-1) edge node [left] {$\i$} (m-1-1)
    (m-1-1) edge node [above] {$F$} (m-1-2)
    (m-2-1) edge node [above] {$\varphi$} (m-1-2)
    (m-2-2) edge node [right] {$\i$} (m-1-2)
    (m-2-1) edge node [below] {$\phi$} (m-2-2);
\end{tikzpicture}
\end{center}
where $F:\r ^{m+1}\rightarrow\r ^{n+1}$ is a vector valued function such that each component is a homogeneous polynomial of degree $k$. Such a map $F$ is called form of degree $k$. When $k=2$, we say that $F$ is a quadratic form, and we will keep the same terminology also for the induced map $\phi$. We will always assume that $\phi$ is not constant.
\begin{theorem}\label{t3}
The bitension field of the map $\phi$ is given by
\begin{align}\label{eq9}
\tau_2(\phi) = & \Dz\Dz F + 2\left(mk+2k^2-3k-m+3-\left|\dz F\right|^2\right)\Dz F \nonumber \\
               & + \left( - 2\Dz\left(\left|\dz F\right|^2\right) - 2\left|\nz\dz F\right|^2 + \left|\Dz F\right|^2 \right.\\
               & \left. - 2\left(2mk+6k^2-6k-m+3\right)\left|\dz F\right|^2 + 2\left|\dz F\right|^4 + 4k^2(m+2k-1)\right) \varphi \nonumber \\
               & + 2\dz F\left(\stackrel{o}{\textnormal{grad}}\left(\left|\dz F\right|^2\right)\right), \nonumber
\end{align}
where $\dz$, $\nz$ and $\Dz$ denote operators that act on $\r^{m+1}$.
\end{theorem}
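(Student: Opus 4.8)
The plan is to substitute $\varphi = \i\circ\phi\colon\s^m\to\r^{n+1}$ into Theorem \ref{t24} --- concretely into the identity \eqref{ec3} proved there --- and then to rewrite every intrinsic quantity of $\varphi$ on $\s^m$ in terms of the flat operators $\dz$, $\nz$, $\Dz$ applied to $F$. Two facts drive the translation. Since $F$ is homogeneous of degree $k$, Euler's identity gives $\dz F(\partial_r) = kF$, hence $\dz F(\partial_r)|_{\s^m} = k\varphi$ and, on tangent vectors of $\s^m$, $\dz F = \d\varphi$. Since $\phi(\s^m)\subset\s^n$, the polynomial identity $|F|^2 = |x|^{2k}$ holds on $\r^{m+1}$. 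Writing the flat Laplacian of $\r^{m+1}$ in polar coordinates produces the basic homogeneity formula: for any homogeneous polynomial $h$ of degree $d$, on $\s^m$ one has $\Delta_{\s^m}(h|_{\s^m}) = (\Dz h)|_{\s^m} + d(m+d-1)\,h|_{\s^m}$, in the paper's sign convention on both sides.

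First I would simplify \eqref{ec3} before bringing in $F$. Using $\tau(\varphi) = -\Delta_{\s^m}\varphi$ and $|\varphi|^2 = 1$ (so $\langle\varphi,\Delta_{\s^m}\varphi\rangle = |\d\varphi|^2$ and $\langle\d\varphi(X),\varphi\rangle = 0$), the one-line frame computation $\textnormal{div}\,\theta^\sharp = |\tau(\varphi)|^2 + \langle\nabla\varphi,\nabla\tau(\varphi)\rangle$, and the polarization identity $\langle\nabla u,\nabla v\rangle = \tfrac12(\langle\Delta_{\s^m}u,v\rangle + \langle u,\Delta_{\s^m}v\rangle - \Delta_{\s^m}\langle u,v\rangle)$, one obtains $-\Delta_{\s^m}|\d\varphi|^2 + 2\,\textnormal{div}\,\theta^\sharp = |\tau(\varphi)|^2 - \langle\varphi,\Delta_{\s^m}^2\varphi\rangle$. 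Since $\Delta_{\s^m}^2\varphi = \tau_2(\varphi)$ (the codomain is flat), \eqref{ec3} collapses to
\[
\tau_2(\phi) = \tau_2(\varphi) + 2|\d\varphi|^2\,\tau(\varphi) + \bigl(2|\d\varphi|^4 - \langle\varphi,\tau_2(\varphi)\rangle\bigr)\varphi + 2\,\d\varphi\bigl(\textnormal{grad}|\d\varphi|^2\bigr),
\]
an identity valid for every $\phi\colon M\to\s^n$; its right-hand side is orthogonal to $\varphi$, which is a useful check.

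Then comes the dictionary. From $\dz F(\partial_r)|_{\s^m} = k\varphi$ and $\dz F = \d\varphi$ on $T\s^m$ one gets $|\dz F|^2|_{\s^m} = |\d\varphi|^2 + k^2$. The homogeneity formula with $h = F$ gives $\Delta_{\s^m}\varphi = \Dz F + k(m+k-1)\varphi$, hence $\tau(\varphi) = -\Dz F - k(m+k-1)\varphi$ and, pairing with $\varphi$, $\langle\varphi,\Dz F\rangle = |\d\varphi|^2 - k(m+k-1)$; applying the formula a second time (with $h = \Dz F$, of degree $k-2$) yields $\tau_2(\varphi) = \Delta_{\s^m}^2\varphi = \Dz\Dz F + 2(mk+k^2-3k-m+3)\Dz F + k^2(m+k-1)^2\varphi$. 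The radial/tangential splitting of $\stackrel{o}{\textnormal{grad}}|\dz F|^2$, which is homogeneous of degree $2k-2$, together with $\dz F(\partial_r) = k\varphi$, gives $\d\varphi(\textnormal{grad}|\d\varphi|^2) = \dz F(\stackrel{o}{\textnormal{grad}}|\dz F|^2) - 2k(k-1)|\dz F|^2\varphi$, which already produces the last term of \eqref{eq9}. Finally, applying the flat Laplacian $\sum_i\partial_i^2$ of $\r^{m+1}$ twice to $|F|^2 = |x|^{2k}$, using $\sum_{i,j,\alpha}(\partial_i\partial_jF^\alpha)^2 = |\nz\dz F|^2$, and passing to $\s^m$ by the homogeneity formula, one obtains the scalar identity
\[
\langle\varphi,\Dz\Dz F\rangle = 2k(k-1)(2k+m-1)(2k+m-3) - |\Dz F|^2 + 2\,\Dz\bigl(|\dz F|^2\bigr) + 2|\nz\dz F|^2 \quad\text{on }\s^m,
\]
which is precisely where the terms $-|\Dz F|^2$, $-2\,\Dz(|\dz F|^2)$ and $-2|\nz\dz F|^2$ of \eqref{eq9} enter.

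It remains to substitute the dictionary into the collapsed form of \eqref{ec3} and to collect the coefficients of $\Dz\Dz F$, of $\Dz F$, of $\dz F(\stackrel{o}{\textnormal{grad}}|\dz F|^2)$ and of $\varphi$, which yields \eqref{eq9} after simplifying the polynomials in $m$ and $k$; e.g. the coefficient of $\Dz F$ becomes $2(mk+2k^2-3k-m+3-|\dz F|^2)$ and, inside the $\varphi$-coefficient, the coefficient of $|\dz F|^2$ reduces to $-2(2mk+6k^2-6k-m+3)$ and the constant to $4k^2(m+2k-1)$. The chief difficulty is not any single idea but the sheer bookkeeping combined with keeping two sign conventions rigorously separate: the geometric Laplacian $\Delta_{\s^m} = -\textnormal{trace}\,\nabla^2$ used throughout, versus the analyst's $\sum_i\partial_i^2$ one must use on $\r^{m+1}$ for the polar decomposition. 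The genuinely delicate steps are the double application of the flat Laplacian to $|F|^2 = |x|^{2k}$ --- a sign error there spoils exactly the second-order terms of \eqref{eq9} --- and the final reduction of the constant term; a convenient consistency check is $k=1$, for which $\Dz F = 0$, $|\dz F|^2 = m+1$, $|\nz\dz F|^2 = 0$, and the right-hand side of \eqref{eq9} vanishes, as it must since linear forms induce totally geodesic, hence harmonic, maps.
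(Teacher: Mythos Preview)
Your proposal is correct and, while sharing the same basic dictionary with the paper (the homogeneity formula $\Delta_{\s^m}h = \Dz h + d(m+d-1)h$, Euler's identity $rF = kF$, and the Weitzenb\"ock identity for $|\dz F|^2$), it is organized differently.  The paper keeps \eqref{ec3} as is and computes each of its five ingredients separately: in particular it obtains $\textnormal{div}\,\theta^\sharp$ by a direct geodesic--frame calculation that produces the term $\langle\dz F,\dz\Dz F\rangle$, and only then invokes Weitzenb\"ock.  You instead first collapse \eqref{ec3} via the general identity $-\Delta|\d\varphi|^2 + 2\,\textnormal{div}\,\theta^\sharp - |\tau(\varphi)|^2 = -\langle\varphi,\tau_2(\varphi)\rangle$, valid for any sphere--valued map, so that the only new scalar to compute is $\langle\varphi,\Dz\Dz F\rangle$; you then get this by twice applying the flat Laplacian to the polynomial identity $|F|^2 = |x|^{2k}$ and restricting to $\s^m$.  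Your route is more economical in bookkeeping (fewer intermediate terms to track), and the collapsed form of \eqref{ec3} is a useful standalone identity; the paper's route is more pedestrian but perhaps more transparent about where each piece of \eqref{eq9} originates.  Both paths ultimately feed the same Euler/homogeneity relations into the same formula \eqref{ec3} and arrive at the same polynomial simplifications, so neither is fundamentally more general.
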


\begin{proof}
For simplicity of notation, an arbitrary point $p\in\s^m$ will be denoted by $\overline x$. Now we consider an arbitrary point $\overline x\in\s^m$ and a geodesic frame field $\{X_i\}_{i=1}^m$ around $\overline x$, defined on the open subset $U$ of $\s^m$, $\overline x\in U$.

On U, we have:
\begin{dmath}\label{eq10}
|\d \varphi|^2=\sum_{i=1}^{m}\left|\d \varphi(X_i)\right|^2 = \sum_{i=1}^{m}\left|\dz F(X_i)\right|^2 = \left|\dz F\right|^2-\left|\dz F(r)\right|^2 = \left|\dz F\right|^2-| r F|^2
\end{dmath}
Then, at $\overline x$ we have
\begin{dmath*}
|\d \varphi|^2_{\overline x} = \left|\dz F\right|^2_{\overline x} - \left|(F\circ\gamma)'(1)\right|^2,
\end{dmath*}
where $\gamma(t)= t\overline x = t r(\overline x)$.

On $U$ we have
\begin{dmath}\label{eq11}
\tau(\varphi) = \dz F(\tau(i))+\textnormal{trace}\nz\dz F(\d \i\cdot,\d \i\cdot) = \dz F(-m r)+\tz(F)-(\nz\dz F)(r,r) = - m r F + \tz(F) -  r(r F)+ r F = \tz(F)+(1-m) r F- r(r F).
\end{dmath}
Equivalently, on $U$:
\begin{dmath*}
\Delta\varphi = \Dz F - (1-m)r F + r(r F).
\end{dmath*}

At $\overline x$, for $(\nz\dz F)_{\overline x}( r, r)$ we can give an alternative formula. Indeed, let $\gamma(t) = t \overline x,$ $t\in\r $. Then, $\gamma$ is a geodesic in $\r ^{m+1}$ and
\begin{dmath*}
\left(\nz \dz F\right)_{\overline x}(r,r) = \left(\nz \dz F\right)_{\overline x}\left(\gamma'(1),\gamma'(1)\right) = \gamma'(1)\left\{\gamma'(t)F\right\} - \left(\nz_{\gamma'(1)}\gamma'(t)\right)(F) = \frac{\d ^2}{\d t^2}\Big|_{t=1}\left\{(F\circ\gamma)(t)\right\}.
\end{dmath*}
So, at $\overline x$:
\begin{dmath*}
\tau(\varphi)_{\overline x} = \tz(F)_{\overline x} - m\frac{\d }{\d t}\Big|_{t=1}\left\{F(t\overline x)\right\}-\frac{\d ^2}{\d t^2}\Big|_{t=1}\left\{F(t\overline x)\right\}.
\end{dmath*}

From now on, we will assume that $F$ is a form of degree $k$. Let us prove that the action of the vector field $r$ on $F$ is $rF = kF$, on $\r ^{m+1}$. For $\overline x_0 = 0$ the equality is obvious. Then, at $\overline x\in\r ^{m+1}\backslash\{0\}$ we have
\begin{align*}
(r F)(\overline x) = &  r(\overline x)F \\
                   = & \frac{\d }{\d t}\Big|_{t=1}\left\{F(t\overline x)\right\} = \frac{\d}{\d t}\Big|_{t=1} \left\{t^kF(\overline x)\right\} \\
                   = & kF(\overline x).
\end{align*}
Moreover, this kind of formula holds for the action of $r$ on $\Dz F$, even if $\Dz F$ is not a form of degree $k-2$. Indeed, each component of $\Dz F$ is either a homogeneous polynomial of degree $k-2$ or $0$ and we have
\begin{equation*}
\left(\Dz F\right)(t\overline x) = t^{k-2}\left(\Dz F\right)(\overline x), \quad \forall \overline x\in\r ^{m+1}.
\end{equation*}
Thus,
\begin{equation}\label{eq12}
r\left(\Dz F\right) = (k-2)\Dz F.
\end{equation}

On $\s^m$, from the eqaution \eqref{eq11} we have the following
\begin{dmath}\label{eq13}
\tau(\varphi) = \tz(F) + (1-m)k\varphi - k^2\varphi = \tz(F) - k(m+k-1)\varphi.
\end{dmath}
Thus, on $\s^m$:
\begin{equation}\label{eq14}
\Delta\varphi = \Dz F + k(m+k-1)\varphi.
\end{equation}
On $\s^m$, the Equation \eqref{eq10} becomes
\begin{equation}\label{eq15}
|\d \varphi|^2 = \left|\dz F\right|^2 - k^2.
\end{equation}
Next, on $\s^m$ we have:
\begin{dmath*}
\textnormal{grad}\left(|\d \varphi|^2\right) = \textnormal{grad}\left(\left|\dz F\right|^2\right) = \stackrel{o}{\textnormal{grad}}\left(\left|\dz F\right|^2\right) -  r\left(\left|\dz F\right|^2\right) r.
\end{dmath*}
So, on $\s^m$:
\begin{align*}
2\d \varphi\left(\textnormal{grad}\left(|\d \varphi|^2\right)\right) &= 2\dz F\left(\stackrel{o}{\textnormal{grad}}\left(\left|\dz F\right|^2\right) -  r\left(\left|\dz F\right|^2\right) r\right) \\
                                                                     &= 2\dz F\left(\stackrel{o}{\textnormal{grad}}\left(\left|\dz F\right|^2\right)\right) - 2k\left[ r\left(\left|\dz F\right|^2\right)\right]\varphi.
\end{align*}

Further, we compute the norm $|\tau(\varphi)|^2$. Using Equations \eqref{eq13}, \eqref{eq3} and \eqref{eq15}, we get
\begin{align*}
|\tau(\varphi)|^2 =& \left|\tz(F) - k(m+k-1)\varphi\right|^2 \\
                  =& \left|\tz(F)\right|^2 + k^2(m+k-1)^2 - 2k(m+k-1)\left\langle\tz(F),\varphi\right\rangle\\
                  =& \left|\tz(F)\right|^2 + k^2(m+k-1)^2\\
                   & - 2k(m+k-1)\left\langle\tau(\phi)-|\d \varphi|^2\varphi+k(m+k-1)\varphi,\varphi\right\rangle\\
                  =& \left|\tz(F)\right|^2 + k^2(m+k-1)^2 - 2k(m+k-1)\left(-|\d \varphi|^2+k(m+k-1)\right)\\
                  =& \left|\tz(F)\right|^2 - k^2(m+k-1)^2 - 2k(m+k-1)\left(\left|\dz F\right|^2-k^2\right)\\
                  =& \left|\tz(F)\right|^2 - k^2(m+k-1)^2 - 2k^3(m+k-1) + 2k(m+k-1)\left|\dz F\right|^2\\
                  =& \left|\tz(F)\right|^2 + 2k(m+k-1)\left|\dz F\right|^2 - k(m+k-1)(km+k^2-k+2k^2).
\end{align*}
We conclude that
\begin{equation*}
|\tau(\varphi)|^2 = \left|\tz(F)\right|^2 + 2k(m+k-1)\left|\dz F\right|^2 - k^2(m+k-1)(m+3k-1).
\end{equation*}

From Equations \eqref{eq12} and \eqref{eq14} we get
\begin{align*}
\tau_2(\varphi) =& \Delta\Delta\varphi = \Delta(\Dz F) + k(m+k-1)\Delta\varphi \\
                =& \Dz \Dz F - (1-m) r(\Dz F) + r\left(r(\Dz F)\right)\\
                 & + k(m+k-1)\Dz F + k^2(m+k-1)^2\varphi \\
                =& \Dz \Dz F - (1-m)(k-2)\Dz F + (k-2)^2\Dz F \\
                 & + k(m+k-1)\Dz F + k^2(m+k-1)^2\varphi \\
                =& \Dz \Dz F + 2(mk+k^2-3k-m+3)\Dz F + k^2(m+k-1)^2\varphi.
\end{align*}
From Equation \eqref{eq15} we obtain
\begin{equation}\label{eq16}
\Delta\left(|\d \varphi|^2\right) = \Delta\left(\left|\dz F\right|^2\right) = \Dz\left(\left|\dz F\right|^2\right) - (1-m) r\left(\left|\dz F\right|^2\right) + r\left( r\left(\left|\dz F\right|^2\right)\right).
\end{equation}
Now, at $\overline x$ we have:
\begin{equation*}
\Delta\left(|\d \varphi|^2\right)_{\overline x} = \Dz\left(\left|\dz F\right|^2\right)_{\overline x} + m\frac{\d}{\d t}\Big|_{t=1}\left\{\left|\dz F\right|^2(t\overline x)\right\} + \frac{\d^2}{\d t^2}\Big|_{t=1}\left\{\left|\dz F\right|^2(t\overline x)\right\}.
\end{equation*}
We note that $\left|(\dz F)(t\overline x)\right|^2 = t^{2(k-1)}\left|(\dz F)(\overline x)\right|^2$, for all $\overline x\in\r ^{m+1}$ and for all $t\in\r$, so
$$
r\left(\left|\dz F\right|^2\right) = 2(k-1)\left|\dz F\right|^2
$$
and
$$
r\left( r\left(\left|\dz F\right|^2\right)\right) = 4(k-1)^2\left|\dz F\right|^2.
$$
Thus, Equation \eqref{eq16} becomes
\begin{align*}
\Delta\left(|\d \varphi|^2\right) &= \Dz\left(\left|\dz F\right|^2\right) - 2(1-m)(k-1)\left|\dz F\right|^2 + 4(k-1)^2\left|\dz F\right|^2 \\
                                  &= \Dz\left(\left|\dz F\right|^2\right) + 2(k-1)(m+2k-3)\left|\dz F\right|^2.
\end{align*}

In the following, first we will compute $\theta$, and then we will compute $\textnormal{div}\theta^\sharp$.
\begin{align*}
\theta(X) &= \left\langle\d\phi(X),\tau(\phi)\right\rangle = \left\langle\d\varphi(X),\tau(\varphi)\right\rangle\\
          &= \left\langle\dz F(X_i),\tz(F)-k(m+k-1)\varphi\right\rangle\\
          &= \left\langle\dz F(X_i),\tz(F)\right\rangle, \quad \textnormal{ on } \s^m.
\end{align*}
We know that
$$
\textnormal{div}\theta^\sharp = \sum_{i=1}^{m}\left\langle X_i,\nabla_{X_i}\theta^\sharp\right\rangle.
$$
Then, at $\overline x$ we have
\begin{align*}
\textnormal{div}\theta^\sharp   &= \sum_{i=1}^{m}X_i\left\langle X_i,\theta^\sharp\right\rangle\\
                                &= \sum_{i=1}^{m}X_i\left(\theta(X_i)\right) = \sum_{i=1}^{m}X_i\left\langle\dz F(X_i),\tz(F)\right\rangle\\
                                &= \sum_{i=1}^{m}\left\{\left\langle\nz_{X_i}\dz F(X_i),\tz(F)\right\rangle + \left\langle\dz F(X_i),\nz_{X_i}\tz(F)\right\rangle\right\}\\
                                &= \sum_{i=1}^{m}\left\{\left\langle\left(\nz\d F\right)(X_i,X_i)+\dz F\left(\nz_{X_i}X_i\right),\tz(F)\right\rangle+\left\langle\dz F(X_i),\nz_{X_i}\tz(F)\right\rangle\right\}\\
                                &= \sum_{i=1}^{m}\left\{\left\langle\left(\nz\d F\right)(X_i.X_i)- r F,\tz(F)\right\rangle\right\} +\left\langle\dz F,\dz\left(\tz(F)\right)\right\rangle - \left\langle r(F), r\left(\tz(F)\right)\right\rangle.
\end{align*}
Further
\begin{align*}
\textnormal{div}\theta^\sharp   &= \left\langle\Dz F + r(rF) - rF + mrF,\Dz F\right\rangle + \left\langle\dz F,\dz\left(\tz(F)\right)\right\rangle + k(k-2)\left\langle \varphi,\Dz F\right\rangle\\
                                &= \left\langle\Dz F + k(m+k-1)\varphi,\Dz F\right\rangle - \left\langle\dz F, \dz\left(\Dz F)\right)\right\rangle + k(k-2)\left\langle \varphi,\Dz F\right\rangle\\
                                &= \left|\Dz F\right|^2 + k(m+k-1)\left\langle \varphi,\Dz F\right\rangle - \left\langle \dz F, \dz\left(\Dz F\right)\right\rangle + k(k-2)\left\langle \varphi,\Dz F\right\rangle\\
                                &= \left|\Dz F\right|^2 + k(m+2k-3)\left\langle \varphi,\Dz F\right\rangle - \left\langle \dz F, \dz\left(\Dz F\right)\right\rangle\\
                                &= \left|\Dz F\right|^2 - k(m+2k-3)\left[-\left|\dz F\right|^2+k^2+k(m+k-1)\right] - \left\langle \dz F, \dz\left(\Dz F\right)\right\rangle\\
                                &= \left|\Dz F\right|^2 + k(m+2k-3)\left|\dz F\right|^2-k^2(m+2k-3)(m+2k-1) - \left\langle \dz F, \dz\left(\Dz F\right)\right\rangle.
\end{align*}
Now, we want to find an expression for $\left\langle \dz F, \dz\left(\Dz F\right)\right\rangle$.
We know that $\dz \Dz = \Dz \dz$. Then, on $\r^{m+1}$,
\begin{equation*}
\left\langle\dz F,\dz(\Dz F)\right\rangle = \left\langle\dz F,\Dz(\dz F)\right\rangle.
\end{equation*}
From the Weitzenb\"{o}k formula for $\dz F$ we get
\begin{equation*}
\frac{1}{2}\Dz\left(\left|\dz F\right|^2\right) = \left\langle\dz F,\Dz\dz F\right\rangle - \left|\nz\dz F\right|^2.
\end{equation*}
Then, on $\s^m$ we have
\begin{dmath*}
\textnormal{div}\theta^\sharp = \left|\dz F\right|^2 + k(m+2k-3)\left|\dz F\right|^2 -k^2(m+2k-3)(m+2k-1) - \frac{1}{2}\Dz\left(\left|\dz F\right|^2\right) - \left|\nz\dz F\right|^2.
\end{dmath*}
Recall that
\begin{dmath*}
2\d\varphi\left(\textnormal{grad}\left(|\d\varphi|^2\right)\right) = 2\dz F\left(\stackrel{o}{\textnormal{grad}}\left(\left|\dz F\right|^2\right)\right) - 2k  r\left(\left|\dz F\right|^2\right)\varphi.
\end{dmath*}
The last term can be rewritten and we have, on $\s^m$
\begin{dmath}\label{eq17}
2\d\varphi\left(\textnormal{grad}\left(|\d\varphi|^2\right)\right) = 2\dz F\left(\stackrel{o}{\textnormal{grad}}\left(\left|\dz F\right|^2\right)\right) - 4k(k-1)\left|\dz F\right|^2\varphi.
\end{dmath}

Next, we replace in Equation \eqref{ec3} the terms we have computed:
\begin{align*}
\tau_2(\phi) =& \Dz\Dz F + 2(mk+k^2-3k-m+3)\Dz F + k^2(m+k-1)^2\varphi \\
              & + 2\left(\left|\dz F\right|^2-k^2\right)\left(-\Dz F-k(m+k-1)\varphi\right)\\
              & + \left(-\Dz\left(\left|\dz F\right|^2\right) - 2(k-1)(m+2k-3)\left|\dz F\right|^2 - \Dz\left(\left|\dz F\right|^2\right) - 2\left|\nz\dz F\right|^2 \right. \\
              & \left. + 2\left|\Dz F\right|^2 + 2k(m+2k-3)\left|\dz F\right|^2 - 2k^2(m+2k-3)(m+2k-1)\right. \\
              & \left. - \left|\Dz F\right|^2 - 2k(m+k-1)\left|\dz F\right|^2 + k^2(m+k-1)(m+3k-1) + 2\left|\dz F\right|^4 + 2k^4 \right. \\
              & \left. - 4\left|\dz F\right|^2k^2\right)\varphi + 2\dz F\left(\stackrel{o}{\textnormal{grad}}\left(\left|\dz F\right|^2\right)\right) - 4k(k-1)\left|\dz F\right|^2\varphi.
\end{align*}
From the above relation,
\begin{align*}
\tau_2(\phi) = & \Dz\Dz F + 2\left(mk+2k^2-3k-m+3-\left|\dz F\right|^2\right)\Dz F \\
               & + \left( - 2\Dz\left(\left|\dz F\right|^2\right) - 2\left|\nz\dz F\right|^2 + \left|\Dz F\right|^2 \right.\\
               & \left. - 2\left(2mk+6k^2-6k-m+3\right)\left|\dz F\right|^2 + 2\left|\dz F\right|^4 + 4k^2(m+2k-1)\right) \varphi\\
               & + 2\dz F\left(\stackrel{o}{\textnormal{grad}}\left(\left|\dz F\right|^2\right)\right).
\end{align*}
\end{proof}

\begin{remark}\label{r1}
We note that, using Equations \eqref{eq3}, \eqref{eq13} we obtain that
\begin{align*}
  \tau(\phi) =& -\Dz F - k(m+k-1)\varphi + \left(\left|\dz F\right|^2-k^2\right) \varphi \\
             =& -\Dz F + \left(\left|\dz F\right|^2 - k(m+2k-1)\right) \varphi.
\end{align*}
If $\Dz F = 0$, since $\varphi$ is normal to $\s^m$ and $\tau(\phi)$ is tangent, from the above relation we conclude that
$$
\tau(\phi) = 0
$$
and, on $\s^m$,
$$
\left|\dz F\right|^2 - k(m+2k-1) = 0,
$$
so the energy density $e(\phi)$ is constant. Moreover, when $k = 2$ we can see that, since $\phi$ is not constant, $\Dz F = 0$ if and only if $\tau(\phi) = 0$. We look for proper biharmonic maps, thus we will consider only the case $\Dz F \neq 0$. For more details about harmonic homogeneous polynomial maps of degree $k$, see \cite{BW03} and \cite{ER93}.
\end{remark}

\section{Biharmonic homogeneous polynomials of degree 2}

Let $F:\r ^{m+1}\rightarrow\r ^{n+1}$ be a quadratic form. Then, $F$ can be written in the form
$$
F(\overline{x}) = \left(X^t A_1 X, \ldots ,X^t A_{n+1}\ X\right),
$$
where $\overline x = \left(x^1, x^2, \ldots , x^{m+1}\right)$ coresponds to $X^t = \left[x^1 \ x^2 \ \ldots \ x^{m+1}\right]$, and $A_1$, ..., $A_{n+1}$ are square matrices of order $m+1$, such that if $|\overline x| = 1$, then $|F(\overline x)| = 1$. We note that, by a standard symmetrization process, we can always assume that $A_i$ is symmetric, $i = 1, 2, \ldots, n+1$.

We will always assume that $\phi$ is not a constant map, therefore there exist $i_0\in\{1,2,\ldots,n+1\}$ such that $A_{i_0}$ is not $I_{m+1}$ multiplied by a non-zero real constant.

In this case, we will compute all the terms from the relation \eqref{eq9}. In the following, for the matrices we will use the Frobenius Norm, which is defined as the square root of the sum of the squares of the elements of the matrix.

We immediately observe that since $F$ is a quadratic form, then $\Dz\Dz F = 0$, on $\r^{m+1}$. Then,
\begin{align}
    DF(\overline x) &= \begin{bmatrix}
                        2 X^t A_1 \\
                        2 X^t A_2 \\
                        \vdots \\
                        2 X^t A_{n+1}
                       \end{bmatrix}
\end{align}
As an immediate result, we obtain that on $\r^{m+1}$
\begin{equation}\label{eq18}
\left|\dz F(\overline x)\right|^2 = 4 X^t\left(A_1^2 + A_2^2 + \dots + A_{n+1}^2\right) X = 4 X^t S  X,
\end{equation}
where we denoted $S = A_1^2 + A_2^2 + \dots + A_{n+1}^2$.

Then, we easily obtain that on $\r^{m+1}$
\begin{align}\label{eq19}
  &\Dz F = -\left(2\textnormal{tr}A_1,  2\textnormal{tr}A_2, \ldots , 2\textnormal{tr}A_{n+1}\right), \nonumber\\
  &\Dz\left(\left|\dz F\right|^2\right) = -8\textnormal{tr}\left(A_1^2 + A_2^2 + \dots + A_{n+1}^2\right) = -8 \textnormal{tr}S, \nonumber\\
  &\left|\nz\dz F\right|^2 = 4\left(|A_1|^2 + \dots + |A_{n+1}|^2\right),\\
  &\stackrel{o}{\textnormal{grad}}\left(\left|\dz F\right|^2\right) = 8X^t\left(A_1^2 + A_2^2 + \dots + A_{n+1}^2\right) = 8X^tSX, \nonumber\\
  &\dz F\left(\stackrel{o}{\textnormal{grad}}\left(\left|\dz F\right|^2\right)\right) = 16 \left(X^t A_1S X, \ldots ,X^t A_{n+1}S X\right). \nonumber
\end{align}

We observe that, since the matrices $A_1$, ..., $A_{n+1}$ are symmetric, then
$$
|A_1|^2 + \dots + |A_{n+1}|^2 = \textnormal{tr}S.
$$

Now, we can state the main result of this section.
\begin{theorem}\label{t4}
Let $F:\r ^{m+1}\rightarrow\r ^{n+1}$ be a quadratic form given by
$$
F(\overline{x}) = \left(X^t A_1 X, \ldots ,X^t A_{n+1}\ X\right),
$$
such that if $|\overline x| = 1$ then $|F(\overline x)| = 1$.  We consider $\phi:\s^m\rightarrow\s^n$ defined by $\phi(\overline x) = F(\overline x)$ and $\varphi = \i\circ\phi:\s^m\rightarrow\r^{n+1}$. If we denote $S = A_1^2 + A_2^2 + \dots + A_{n+1}^2$ then, at a point $\overline x\in\s^m$, the bitension field of $\phi$ has the following expression
\begin{align}\label{eq20}
\tau_2(\phi)  =& -4\left(m + 5 - 4X^tSX\right)\left(\textnormal{tr}A_1, \textnormal{tr}A_2, \ldots , \textnormal{tr}A_{n+1}\right) \nonumber\\
               & + \left(8\textnormal{tr}S + 4\left( \left(\textnormal{tr}A_1\right)^2 + \dots + \left(\textnormal{tr}A_{n+1}\right)^2 \right) \right. \\
               & \left. - 8(3m + 15)X^tSX + 32 \left(X^tSX\right)^2 + 16(m + 3)\right) \varphi \nonumber\\
               & + 32 \left(X^t A_1S X, \ldots ,X^t A_{n+1}S X\right).\nonumber
\end{align}
\end{theorem}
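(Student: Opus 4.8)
The plan is to obtain \eqref{eq20} directly from the general bitension field formula \eqref{eq9} of Theorem \ref{t3} by setting $k=2$ and substituting the explicit expressions for a quadratic form recorded in \eqref{eq18} and \eqref{eq19}.

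First I would record the simplifications that occur at $k=2$. Since each component of $F$ is a homogeneous polynomial of degree $2$, the vector $\Dz F=-(2\,\textnormal{tr}A_1,\ldots,2\,\textnormal{tr}A_{n+1})$ is constant on $\r^{m+1}$, so $\Dz\Dz F=0$ and the leading term of \eqref{eq9} disappears. Evaluating the three integer coefficients in \eqref{eq9} at $k=2$ gives
\[
mk+2k^2-3k-m+3=m+5,\qquad 2mk+6k^2-6k-m+3=3m+15,\qquad 4k^2(m+2k-1)=16(m+3),
\]
so that \eqref{eq9} becomes
\[
\tau_2(\phi)=2\bigl(m+5-\left|\dz F\right|^2\bigr)\Dz F+C\,\varphi+2\dz F\bigl(\stackrel{o}{\textnormal{grad}}\left(\left|\dz F\right|^2\right)\bigr),
\]
where $C=-2\Dz\left(\left|\dz F\right|^2\right)-2\left|\nz\dz F\right|^2+\left|\Dz F\right|^2-2(3m+15)\left|\dz F\right|^2+2\left|\dz F\right|^4+16(m+3)$.

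Next I would substitute the quadratic-form data. By \eqref{eq18} we have $\left|\dz F\right|^2=4X^tSX$ with $S=A_1^2+\cdots+A_{n+1}^2$, whence the first term of the reduced formula equals $-4\bigl(m+5-4X^tSX\bigr)(\textnormal{tr}A_1,\ldots,\textnormal{tr}A_{n+1})$, which is the first line of \eqref{eq20}; and by the last identity in \eqref{eq19}, $2\dz F\bigl(\stackrel{o}{\textnormal{grad}}(\left|\dz F\right|^2)\bigr)=32(X^tA_1SX,\ldots,X^tA_{n+1}SX)$, the last line of \eqref{eq20}. For the constant $C$ multiplying $\varphi$ I would use the remaining identities of \eqref{eq19}, namely $\Dz(\left|\dz F\right|^2)=-8\,\textnormal{tr}S$, $\left|\nz\dz F\right|^2=4(\left|A_1\right|^2+\cdots+\left|A_{n+1}\right|^2)$, $\left|\Dz F\right|^2=4((\textnormal{tr}A_1)^2+\cdots+(\textnormal{tr}A_{n+1})^2)$ and $\left|\dz F\right|^4=16(X^tSX)^2$.

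The one step needing care is the cancellation that produces the term $8\,\textnormal{tr}S$: the contributions $-2\Dz(\left|\dz F\right|^2)=16\,\textnormal{tr}S$ and $-2\left|\nz\dz F\right|^2=-8(\left|A_1\right|^2+\cdots+\left|A_{n+1}\right|^2)$ combine to $8\,\textnormal{tr}S$ only after invoking the identity $\left|A_1\right|^2+\cdots+\left|A_{n+1}\right|^2=\textnormal{tr}S$, which holds because each $A_i$ is symmetric (so $\left|A_i\right|^2=\textnormal{tr}(A_i^tA_i)=\textnormal{tr}(A_i^2)$). Collecting this with the remaining pieces $4((\textnormal{tr}A_1)^2+\cdots+(\textnormal{tr}A_{n+1})^2)$, $-2(3m+15)\cdot4X^tSX=-8(3m+15)X^tSX$, $2\cdot16(X^tSX)^2=32(X^tSX)^2$ and $16(m+3)$ yields exactly the bracketed $\varphi$-coefficient of \eqref{eq20}. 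Apart from this sign-and-symmetry bookkeeping, there is no genuine obstacle: the whole argument is a direct specialization of Theorem \ref{t3}.
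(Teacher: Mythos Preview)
Your proposal is correct and follows exactly the same route as the paper: the paper's proof consists of the single line ``follows immediately from Equation \eqref{eq9} for $k=2$ using Equations \eqref{eq18} and \eqref{eq19}'', and you have simply written out the bookkeeping of that substitution, including the symmetry identity $\sum_i|A_i|^2=\textnormal{tr}\,S$ which the paper records just before the statement of Theorem \ref{t4}.
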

\begin{proof}
The proof follows immediately from Equations \eqref{eq9}, for $k = 2$ and using Equations \eqref{eq18} and \eqref{eq19}.
\end{proof}

We note that, using Equations \eqref{eq15} and \eqref{eq18}, the condition $S = \alpha I_{m+1}$, for some real constant $\alpha$, is equivalent to $|\d\phi|^2$ is constant.

From Equation \eqref{eq20}, and the fact that $\Dz F$ is constant, we obtain
\begin{proposition}\label{p2}
If the quadratic form $\phi$ has constant energy density, then $\phi$ is proper biharmonic if and only if we have on $\s^m$
\begin{equation}\label{eq22}
\left|\dz F\right|^2 = m + 5 \quad \textnormal{and} \quad \left|\Dz F\right|^2 = 4\left( \left(\textnormal{tr}A_1\right)^2 + \dots + \left(\textnormal{tr}A_{n+1}\right)^2 \right) = 2(m+1)^2.
\end{equation}
\end{proposition}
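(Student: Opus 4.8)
The plan is to substitute the constant-energy-density hypothesis directly into the bitension-field formula \eqref{eq20} and watch it collapse. As noted just before the statement, constant energy density is equivalent to $S = \alpha I_{m+1}$ for some $\alpha\in\r$; then by \eqref{eq15} and \eqref{eq18} one has $|\dz F|^2 = 4\,X^tSX = 4\alpha$ on $\s^m$, so $\alpha$ is a genuine constant, and $\textnormal{tr}S = \alpha(m+1)$. Writing $v = (\textnormal{tr}A_1,\dots,\textnormal{tr}A_{n+1})\in\r^{n+1}$, which is a \emph{constant} vector with $\Dz F = -2v$ by \eqref{eq19}, the last summand of \eqref{eq20} becomes $32\,\alpha\,(X^tA_1X,\dots,X^tA_{n+1}X) = 32\alpha\,\varphi$, the factor $X^tSX$ in the first summand is the constant $\alpha$, and the whole formula reduces to
\[
\tau_2(\phi) = -4(m+5-4\alpha)\,v + C\,\varphi ,
\]
where $C = 8\alpha(m+1) + 4|v|^2 - 8(3m+15)\alpha + 32\alpha^2 + 16(m+3) + 32\alpha$ is a constant depending only on $m$, $\alpha$ and $|v|^2$.

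Next I would use that $v$ is constant while $\varphi$ is not. Since $\phi$ is proper, it is non-harmonic, so $\Dz F\neq 0$ by Remark \ref{r1} (with $k=2$), i.e. $v\neq 0$, and in particular $\varphi$ is non-constant. If $C\neq 0$, then $\tau_2(\phi)=0$ would give $\varphi = \frac{4(m+5-4\alpha)}{C}\,v$, a constant map, a contradiction; hence $C = 0$. Then $-4(m+5-4\alpha)v = 0$ with $v\neq 0$ forces $\alpha = (m+5)/4$, which is precisely $|\dz F|^2 = m+5$. Feeding $\alpha=(m+5)/4$ into $C=0$ and simplifying (the $\alpha$-dependent part telescopes to $-2(m+5)^2$), one is left with $4|v|^2 = 2(m+5)^2 - 16(m+3) = 2(m+1)^2$, i.e. $|\Dz F|^2 = 4\big((\textnormal{tr}A_1)^2+\dots+(\textnormal{tr}A_{n+1})^2\big) = 2(m+1)^2$. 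Conversely, if both displayed conditions hold, then $\alpha=(m+5)/4$ annihilates the $v$-term and $4|v|^2=2(m+1)^2$ makes $C$ vanish, so $\tau_2(\phi)=0$; and since $|\Dz F|^2 = 2(m+1)^2\neq 0$ we get $\Dz F\neq 0$, hence $\tau(\phi)\neq 0$ by Remark \ref{r1}, so $\phi$ is proper biharmonic.

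The only nontrivial part is the bookkeeping: assembling $C$ correctly after the substitution $S=\alpha I_{m+1}$ — in particular not forgetting the extra $32\alpha$ coming from the reduced last term of \eqref{eq20} — and verifying the small identity $2(m+5)^2 - 16(m+3) = 2(m+1)^2$. Conceptually there is no obstacle: the argument is just the elementary observation that a fixed nonzero vector and a nonconstant sphere-valued map can cancel only if each coefficient is zero.
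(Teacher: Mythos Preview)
Your proof is correct and follows essentially the same approach as the paper: substitute $S=\alpha I_{m+1}$ into \eqref{eq20}, reduce $\tau_2(\phi)$ to a combination of the constant vector $\Dz F$ and $\varphi$, and then use that a nonconstant sphere-valued map cannot equal a constant vector to force both coefficients to vanish. The only cosmetic difference is that the paper isolates a single component $i_0$ with $A_{i_0}$ not a scalar matrix to deduce the $\varphi$-coefficient vanishes, whereas you argue with the whole vector $\varphi$ at once; your bookkeeping of the constant $C$ (including the extra $32\alpha$ from the last term of \eqref{eq20}) and the identity $2(m+5)^2-16(m+3)=2(m+1)^2$ are all correct. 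One minor wording issue: the phrase ``$v\neq 0$, and in particular $\varphi$ is non-constant'' is not a logical implication---the nonconstancy of $\varphi$ follows instead from $\tau(\phi)\neq 0$ (or the standing assumption that $\phi$ is nonconstant), not from $v\neq 0$.
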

\begin{proof}
Since the map $\phi$ has constant energy density, it follows that $S = \alpha I_{m+1}$, $\alpha \neq 0$. Using Equation \eqref{eq20}, we immediately obtain
\begin{dmath}\label{eq23}
\tau_2(\phi) = -4\left(m + 5 -4\alpha\right)\left(\textnormal{tr}A_1, \textnormal{tr}A_2, \ldots , \textnormal{tr}A_{n+1}\right) + \left( 4\left( \left(\textnormal{tr}A_1\right)^2 + \dots + \left(\textnormal{tr}A_{n+1}\right)^2 \right) + 32 \alpha^2 - 8(2m + 10) \alpha + 16(m+3) \right) \varphi = 2\left(m + 5 -4\alpha\right) \Dz F + \left(\left|\Dz F\right|^2  + 32 \alpha^2 - 8(2m + 10) \alpha + 16(m+3) \right) \varphi
\end{dmath}

Recall that there exist $i_0\in\{1,2,\ldots,n+1\}$ such that $A_{i_0}$ is not $I_{m+1}$ multiplied by e real non-zero real constant. If $\tau_2(\phi) = 0$, then looking at the $i_0$-th component we get
$$
\left|\Dz F\right|^2  + 32 \alpha^2 - 8(2m + 10) \alpha + 16(m+3) = 0.
$$
But this implies, since $\Dz F \neq 0$, that
$$
m + 5 - 4\alpha = 0.
$$
Thus
$$
\tau_2(\phi) = 0 \Longleftrightarrow m + 5 - 4\alpha = 0 \textnormal{ and } \left|\Dz F\right|^2 = 2(m + 1)^2.
$$
Moreover, such a map can not be harmonic.
\end{proof}

\begin{remark}
If $T$ is an orthogonal matrix, i.e. $T^t\cdot T = I_{m+1}$, then $F\circ T$ is given by the symmetric matrices
$$
A_1' = T^tA_1T, \ldots,A_{n+1}' = T^tA_{n+1}T.
$$
Thus
$$
\left(A_1'\right)^2 + \dots + \left(A_{n+1}'\right)^2 = T^tST = \alpha I_{m+1}
$$
and, since $\textnormal{tr}(T^t A_i T) = \textnormal{tr}A_i$, we have
$$
\left(\textnormal{tr}A_1'\right)^2 + \dots + \left(\textnormal{tr}A_{n+1}'\right)^2 = \left(\textnormal{tr}A_1\right)^2 + \dots + \left(\textnormal{tr}A_{n+1}\right)^2.
$$
\end{remark}

\begin{example}
We consider $F:\mathbb{C}^4\rightarrow\r ^{10}$ given by
\begin{align*}
F\left(z^1, z^2, z^3, z^4\right) =& \left(\left|z^1\right|^2 + \left|z^2\right|^2, z^1z^3 - z^2z^4, z^1z^3 + z^2z^4,\right.\\
                       & \left. z^2z^3 - z^1z^4, z^2z^3 + z^1z^4, \left|z^3\right|^2 + \left|z^4\right|^2\right)
\end{align*}
We can identify $F$ with the quadratic map $F:\r^8\rightarrow\r^{10}$
\begin{align*}
F&\left(x^1,y^1,x^2,y^2,x^3,y^3,x^4,y^4\right) = \\
                 =& \left(\left(x^1\right)^2 + \left(y^1\right)^2 + \left(x^2\right)^2 + \left(y^2\right)^2, x^1x^3 - x^2x^4 - y^1y^3 + y^2y^4,\right.\\
                  & \left. x^1y^3 + x^3y^1 - x^2y^4 - x^4y^2, x^1x^3 + x^2x^4 - y^1y^3 - y^2y^4, \right.\\
                  & \left. x^1y^3 + x^3y^1 + x^2y^4 + x^4y^2, x^2x^3 - x^1x^4 + y^1y^4 - y^2y^3, \right.\\
                  & \left. x^2y^3 + x^3y^2 - x^1y^4 - x^4y^1, x^2x^3 + x^1x^4 - y^1y^4 - y^2y^3, \right.\\
                  & \left.x^2y^3 + x^3y^2 + x^1y^4 + x^4y^1, \left(x^3\right)^2 + \left(y^3\right)^2 + \left(x^4\right)^2 + \left(y^4\right)^2 \right)
\end{align*}

It is clear that $F$ maps $\s^7$ into $\s^9$. The map F is given by the matrices
\begin{align*}
  A_1 = \begin{bmatrix}
             I_4 & O_4 \\
             O_4 & I_4
           \end{bmatrix},
\quad
  A_2 = \frac{1}{2}\begin{bmatrix}
             O_4 & B_2 \\
             B_2 & O_4
             \end{bmatrix},
\quad
  A_3 = \frac{1}{2}\begin{bmatrix}
            O_4 & B_3 \\
            B_3 & O_4
           \end{bmatrix},
\quad
  A_4 = \frac{1}{2}\begin{bmatrix}
            O_4 & B_4 \\
            B_4 & O_4
           \end{bmatrix},
\end{align*}
\begin{align*}
  A_5 = \frac{1}{2}\begin{bmatrix}
             O_4 & B_5 \\
             B_5 & O_4
           \end{bmatrix},
\quad
  A_6 = \frac{1}{2}\begin{bmatrix}
            O_4 & B_6 \\
            B_6^t & O_4
           \end{bmatrix},
\quad
  A_7 = \frac{1}{2}\begin{bmatrix}
            O_4 & B_7 \\
            B_7^t & O_4
           \end{bmatrix},
\quad
  A_8 = \frac{1}{2}\begin{bmatrix}
            O_4 & B_8 \\
            B_8 & O_4
           \end{bmatrix},
\end{align*}
\begin{align*}
  A_9 = \frac{1}{2}\begin{bmatrix}
             O_4 & B_9\\
             B_9 & O_4
           \end{bmatrix},
\quad
  A_{10} = \begin{bmatrix}
             O_4 & O_4\\
             O_4 & I_4
           \end{bmatrix},
\end{align*}
where
\begin{align*}
B_2 = \begin{bmatrix}
         1 & 0 & 0 & 0 \\
         0 & -1 & 0 & 0 \\
         0 & 0 & -1 & 0 \\
         0 & 0 & 0 & 1
      \end{bmatrix},
\quad
B_3 = \begin{bmatrix}
         0 & 1 & 0 & 0 \\
         1 & 0 & 0 & 0 \\
         0 & 0 & 0 & -1 \\
         0 & 0 & -1 & 0
       \end{bmatrix},
\quad
B_4 = \begin{bmatrix}
         1 & 0 & 0 & 0 \\
         0 & -1 & 0 & 0 \\
         0 & 0 & 1 & 0 \\
         0 & 0 & 0 & -1
      \end{bmatrix},
\end{align*}
\begin{align*}
B_5 = \begin{bmatrix}
         0 & 1 & 0 & 0 \\
         1 & 0 & 0 & 0 \\
         0 & 0 & 0 & 1 \\
         0 & 0 & 1 & 0
      \end{bmatrix},
\quad
B_6 = \begin{bmatrix}
          0 & 0 & -1 & 0 \\
          0 & 0 & 0 & 1 \\
          1 & 0 & 0 & 0 \\
          0 & -1 & 0 & 0
      \end{bmatrix},
\quad
B_7 = \begin{bmatrix}
           0 & 0 & 0 & -1 \\
           0 & 0 & -1 & 0 \\
           0 & 1 & 0 & 0 \\
           1 & 0 & 0 & 0
      \end{bmatrix},
\end{align*}
\begin{align*}
B_8 = \begin{bmatrix}
           0 & 0 & 1 & 0 \\
           0 & 0 & 0 & -1 \\
           1 & 0 & 0 & 0 \\
           0 & -1 & 0 & 0
       \end{bmatrix},
\quad
B_9 = \begin{bmatrix}
           0 & 0 & 0 & 1 \\
           0 & 0 & 1 & 0 \\
           0 & 1 & 0 & 0 \\
           1 & 0 & 0 & 0
      \end{bmatrix}.
\end{align*}
We can easily observe that
$$
A_1^2 + A_2^2 + A_3^2 + A_4^2 + A_5^2 + A_6^2 + A_7^2 + A_8^2 + A_9^2 + A_{10}^2 = 3 I_8,
$$
and thus $F$ satisfies the conditions for Proposition \ref{p2}. In this case, $m = 7$ and $\alpha = 3$. Since $\Dz F \neq 0,$ $m+5 = 4\alpha$ and $\left|\Dz F\right|^2 = 2(m + 1)^2$ we conclude that $F$ is proper biharmonic.

We note that if we act on $F$ with the isometry
\begin{align*}
T = \begin{bmatrix}
    \frac{1}{\sqrt{2}} & 0 & 0 & 0 & 0 & 0 & 0 & 0 & 0 & -\frac{1}{\sqrt{2}} \\
    0 & 1 & 0 & 0 & 0 & 0 & 0 & 0 & 0 & 0 \\
    0 & 0 & 1 & 0 & 0 & 0 & 0 & 0 & 0 & 0 \\
    0 & 0 & 0 & 1 & 0 & 0 & 0 & 0 & 0 & 0 \\
    0 & 0 & 0 & 0 & 1 & 0 & 0 & 0 & 0 & 0 \\
    0 & 0 & 0 & 0 & 0 & 1 & 0 & 0 & 0 & 0 \\
    0 & 0 & 0 & 0 & 0 & 0 & 1 & 0 & 0 & 0 \\
    0 & 0 & 0 & 0 & 0 & 0 & 0 & 1 & 0 & 0 \\
    0 & 0 & 0 & 0 & 0 & 0 & 0 & 0 & 1 & 0 \\
    \frac{1}{\sqrt{2}} & 0 & 0 & 0 & 0 & 0 & 0 & 0 & 0 & \frac{1}{\sqrt{2}} \\
    \end{bmatrix},
\end{align*}
the last component of the $\r^{10}$-valued quadratic form $T\circ F$ is the squared norm of the argument divided by $\sqrt{2}$. Then we remove $\left(T\circ F\right)^{10}$, we multiply the new map with $\sqrt{2}$ and we obtain a new quadratic map $G:\s^7\rightarrow\s^8$ which is harmonic and $|\d G|^2$ is constant. Therefore, this example can be seen as another application to Theorem \ref{t1}.
\end{example}

\begin{proposition}
Let $P:\r^n\times\r^n\rightarrow\r^n$, $n = 2$, $4$, $8$ be the orthogonal multiplication of complex numbers, quaternions or octonions. Then, the quadratic form $\phi_\lambda:\s^{2n-1}\rightarrow\s^{n+1}$, obtained from the restriction of $F_\lambda:\r^{n}\times\r^n\rightarrow\r^{n+2}$ given by
$$
F_\lambda(\overline z,\overline w) = \left(|\overline z|^2 + \lambda |\overline w|^2, \sqrt{2(1-\lambda)}P(\overline z,\overline w), \sqrt{1 - \lambda^2}|\overline w|^2\right),
$$
is a proper biharmonic map if and only if $\lambda=0$. In this case, up to a homothety of the domain and/or target sphere, $\phi_0$ is the composition of the harmonic Hopf fibration
$$
H:\s^{2n-1}\rightarrow\s^n\left(\frac{1}{\sqrt{2}}\right)
$$
followed by the biharmonic inclusion
$$
\i:\s^n\left(\frac{1}{\sqrt{2}}\right)\rightarrow\s^{n+1}.
$$
\end{proposition}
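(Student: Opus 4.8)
The plan is to put $F_\lambda$ into the framework of Section 4 and invoke Proposition \ref{p2}. Write $\overline x=(\overline z,\overline w)\in\r^n\times\r^n=\r^{2n}$, $a=|\overline z|^2$, $b=|\overline w|^2$; note we need $\lambda\in[-1,1)$ for the square roots to be real and for $\phi_\lambda$ to be non-constant. First I would check $\phi_\lambda$ is well defined: since $P$ is an orthogonal multiplication, $|P(\overline z,\overline w)|^2=|\overline z|^2|\overline w|^2=ab$, so on $\s^{2n-1}$ (where $a+b=1$),
\[
|F_\lambda(\overline z,\overline w)|^2=(a+\lambda b)^2+2(1-\lambda)ab+(1-\lambda^2)b^2=(a+b)^2=1 .
\]
Writing $P_j(\overline z,\overline w)=\overline z^{\,t}C_j\overline w$ for the components of $P$, the symmetric matrices representing $F_\lambda$ as a quadratic form on $\r^{2n}$ are, in block form (first block the $\overline z$-variables, second the $\overline w$-variables),
\[
A_1=\begin{bmatrix} I_n & 0\\ 0 & \lambda I_n\end{bmatrix},\qquad A_{j+1}=\tfrac{\sqrt{2(1-\lambda)}}{2}\begin{bmatrix} 0 & C_j\\ C_j^{\,t} & 0\end{bmatrix}\ (1\le j\le n),\qquad A_{n+2}=\sqrt{1-\lambda^2}\begin{bmatrix} 0 & 0\\ 0 & I_n\end{bmatrix}.
\]

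The crucial step is to show $\phi_\lambda$ has constant energy density, equivalently (as noted after Theorem \ref{t4}) that $S:=\sum_{i=1}^{n+2}A_i^2$ is a scalar multiple of $I_{2n}$. Expanding $\sum_j P_j^2=|\overline z|^2|\overline w|^2$ as a quadratic form in $\overline z$ (resp.\ in $\overline w$) and evaluating at standard basis vectors yields the orthogonality relations $\sum_{j=1}^n C_jC_j^{\,t}=\sum_{j=1}^n C_j^{\,t}C_j=n\,I_n$. Substituting $A_1^2=\mathrm{diag}(I_n,\lambda^2 I_n)$, $A_{j+1}^2=\tfrac{1-\lambda}{2}\,\mathrm{diag}(C_jC_j^{\,t},\,C_j^{\,t}C_j)$, $A_{n+2}^2=(1-\lambda^2)\,\mathrm{diag}(0,I_n)$ and summing, both diagonal blocks collapse to the same scalar, giving $S=\alpha I_{2n}$ with $\alpha=1+\tfrac{n(1-\lambda)}{2}$. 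Hence Proposition \ref{p2} applies with $m=2n-1$: $\phi_\lambda$ is proper biharmonic iff on $\s^{2n-1}$ one has $|\dz F_\lambda|^2=m+5$ and $|\Dz F_\lambda|^2=2(m+1)^2$. By \eqref{eq18} and $S=\alpha I_{2n}$, $|\dz F_\lambda|^2=4\alpha=4+2n(1-\lambda)$ on $\s^{2n-1}$, so the first condition reads $4+2n(1-\lambda)=2n+4$, i.e.\ $\lambda=0$; this already shows $\phi_\lambda$ is not proper biharmonic for $\lambda\neq0$. When $\lambda=0$ we have $\Dz F_0=-(2\,\textnormal{tr}A_1,\dots,2\,\textnormal{tr}A_{n+2})=-(2n,0,\dots,0,2n)\neq0$, so $\phi_0$ is non-harmonic by Remark \ref{r1}, and $|\Dz F_0|^2=8n^2=2(2n)^2=2(m+1)^2$; thus both conditions of Proposition \ref{p2} hold and $\phi_0$ is proper biharmonic, completing the equivalence.

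For the last assertion I would exhibit the factorization explicitly. For $\lambda=0$, $\varphi_0=F_0|_{\s^{2n-1}}=(|\overline z|^2,\sqrt2\,P(\overline z,\overline w),|\overline w|^2)$, whose first and last coordinates sum to $1$; hence the image lies in $\s^{n+1}\cap\{y_1+y_{n+2}=1\}$, a small hypersphere isometric to $\s^n(1/\sqrt2)$. After composing with a suitable orthogonal transformation of the target $\r^{n+2}$ (a $\pi/4$ rotation in the $(y_1,y_{n+2})$-plane followed by a reordering of coordinates), $\varphi_0$ becomes $\bigl(\sqrt2\,P(\overline z,\overline w),\tfrac1{\sqrt2}(|\overline z|^2-|\overline w|^2),\tfrac1{\sqrt2}\bigr)$, which is precisely $\i\circ H$ with $H:\s^{2n-1}\to\s^n(1/\sqrt2)$, $H(\overline z,\overline w)=\bigl(\sqrt2\,P(\overline z,\overline w),\tfrac1{\sqrt2}(|\overline z|^2-|\overline w|^2)\bigr)$, the Hopf fibration determined by $P$. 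Since $H$ is harmonic with constant energy density, Theorem \ref{t1} gives an independent confirmation that $\i\circ H$, and hence $\phi_0$, is proper biharmonic.

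I expect the second paragraph to be the heart of the matter: the key point is that orthogonality of $P$ is exactly what forces $S$ to be a scalar matrix, which is what makes Proposition \ref{p2} available at all, and then one must verify that its two numerical conditions are simultaneously satisfiable only at $\lambda=0$ and that they do hold there (together with $\Dz F_0\neq0$, so that biharmonicity is proper). The identification of $\phi_0$ with $\i\circ H$ is then essentially bookkeeping about which ambient isometry to use.
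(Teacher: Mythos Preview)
Your proof is correct and follows the same strategy as the paper: show that $S=\sum_i A_i^2$ is a scalar multiple of the identity, then apply Proposition~\ref{p2} with $m=2n-1$. The paper only writes out the case $n=4$ with explicit $8\times 8$ matrices (and states that $n=2,8$ are analogous), while you handle all three cases uniformly by extracting the identity $\sum_{j=1}^n C_jC_j^{\,t}=\sum_{j=1}^n C_j^{\,t}C_j=nI_n$ directly from the orthogonality of $P$; this is a clean unification, but the underlying method is the same.
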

\begin{proof}
The proofs for each case are similar and we will present here only the case $n = 4$. Moreover, the case $n = 2$ will be studied in a more general context in the next Theorem \ref{t4.6}.

Next, we will prove the result for $n = 4$. Let
$$
F_\lambda:\r^8\rightarrow\r^6,
$$
\begin{align*}
F_\lambda&\left(x^1,x^2,x^3,x^4, y^1,y^2,y^3,y^4\right) = \\
         =& \left(\left(x^1\right)^2 + \left(x^2\right)^2 + \left(x^3\right)^2 + \left(x^4\right)^2 + \lambda\left(\left(y^1\right)^2 + \left(y^2\right)^2 + \left(y^3\right)^2 + \left(y^4\right)^2\right),\right. \\
          & \left.\sqrt{2(1 - \lambda)}\left(x^1y^1 - x^2y^2 - x^3y^3 - x^4y^4\right),  \sqrt{2(1 - \lambda)}\left(x^1y^2 + x_2y_1 +x_3y^4 - x^4y^3\right),\right. \\
          & \left. \sqrt{2(1 - \lambda)}\left(x_1y^3-x^2y^4 + x^3y^1 + x^4y^2\right),  \sqrt{2(1 - \lambda)}\left(x^1y^4 + x^2y^3 - x^3y^2 + x^4y^1\right),\right. \\
          & \left. \sqrt{1-\lambda^2}\left(\left(y^1\right)^2 + \left(y^2\right)^2 + \left(y^3\right)^2 + \left(y^4\right)^2\right)\right)
\end{align*}

It is clear that if $|\overline x| = 1$, then $\left|F_\lambda(\overline x)\right| = 1$. The map $F_\lambda$ is given by the matrices

\begin{align*}
  A_1 = \begin{bmatrix}
             I_4 & O_4\\
             O_4 & \lambda I_4
           \end{bmatrix},
\quad
  A_2 = \frac{\sqrt{2(1 - \lambda)}}{2}\begin{bmatrix}
             O_4 & B_2\\
             B_2 & O_4
           \end{bmatrix},
\end{align*}
\begin{align*}
  A_3 = \frac{\sqrt{2(1 - \lambda)}}{2}\begin{bmatrix}
             O_4 & B_3 \\
             B_3^t & O_4
           \end{bmatrix},
\quad
  A_4 = \frac{\sqrt{2(1 - \lambda)}}{2}\begin{bmatrix}
            O_4 & B_4 \\
            B_4^t & O_4
           \end{bmatrix},
\end{align*}
\begin{align*}
  A_5 = \frac{\sqrt{2(1 - \lambda)}}{2}\begin{bmatrix}
            O_4 & B_5 \\
            B_5^t & O_4
           \end{bmatrix},
\quad
  A_6 = \sqrt{1-\lambda^2}\begin{bmatrix}
             O_4 & O_4\\
             O_4 & I_4
           \end{bmatrix},
\end{align*}
where
\begin{align*}
B_2 = \begin{bmatrix}
             1 & 0 & 0 & 0 \\
             0 & -1 & 0 & 0 \\
             0 & 0 & -1 & 0 \\
             0 & 0 & 0 & -1
      \end{bmatrix},
\quad
B_3 = \begin{bmatrix}
             0 & 1 & 0 & 0 \\
             1 & 0 & 0 & 0 \\
             0 & 0 & 0 & 1 \\
             0 & 0 & -1 & 0
      \end{bmatrix},
\end{align*}
\begin{align*}
B_4 = \begin{bmatrix}
             0 & 0 & 1 & 0 \\
             0 & 0 & 0 & -1 \\
             1 & 0 & 0 & 0 \\
             0 & 1 & 0 & 0
      \end{bmatrix},
\quad
B_5 = \begin{bmatrix}
             0 & 0 & 0 & 1 \\
             0 & 0 & 1 & 0 \\
             0 & -1 & 0 & 0 \\
             1 & 0 & 0 & 0
      \end{bmatrix}.
\end{align*}
We can easily observe that
$$
A_1^2 + A_2^2 + A_3^2 + A_4^2 + A_5^2 + A_6^2 + A_7^2 + A_8^2 = (3 - \lambda) I_8,
$$
and thus $F$ satisfies the conditions for Proposition \ref{p2}. In this case, $m = 7$ and $\alpha = 3$. Since $\Dz F_\lambda \neq 0,$ from $m + 5 = 4\alpha$ and $\left|\Dz F_\lambda\right|^2 = 2(m + 1)^2$ we conclude that $\phi_\lambda$ is proper biharmonic if and only if $\lambda = 0$.

The proof for $n = 8$ is similar.

Now, acting with an appropriate isometry on the target manifold, we can see that, up to a homothety of the domain and/or target sphere, the map $\phi_0$ it is the composition of the harmonic Hopf fibration
$$
H:\s^{2n-1}\rightarrow\s^n\left(\frac{1}{\sqrt{2}}\right), \quad \textnormal{ for } n = 2, 4, 8,
$$
followed by the biharmonic inclusion
$$
\i:\s^n\left(\frac{1}{\sqrt{2}}\right)\rightarrow\s^{n+1}.
$$
\end{proof}

\begin{remark}
We note that, for $n = 2$, the map $\phi_0$ is given in \eqref{ec1}.
\end{remark}

\begin{remark}
More examples of proper biharmonic quadratic forms between spheres can be obtained by applying the Theorem \ref{t1} to the harmonic quadratic forms between spheres obtained in \cite{GT87}.
\end{remark}

As a direct consequence of Proposition \ref{p2} we can give the following result:
\begin{proposition}
If $F_1, F_2:\r^{m+1}\rightarrow\r^{n+1}$ are two quadratic forms with constant energy density, such that $\phi_1$ and $\phi_2$ are proper biharmonic, then restriction $\phi$ of the $\r^{2n+2}$-valued quadratic form $F = (\cos\beta F_1, \sin\beta F_2)$, for an arbitrarily fixed $\beta$, is proper biharmonic.
\end{proposition}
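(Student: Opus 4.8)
The plan is to reduce everything to Proposition \ref{p2}. Write $F_1$ and $F_2$ via symmetric matrices, $F_1(\overline x)=\left(X^tA_1X,\dots,X^tA_{n+1}X\right)$ and $F_2(\overline x)=\left(X^tC_1X,\dots,X^tC_{n+1}X\right)$, so that $F=(\cos\beta\,F_1,\sin\beta\,F_2)$ is the $\r^{2n+2}$-valued quadratic form attached to the symmetric matrices $\cos\beta\,A_1,\dots,\cos\beta\,A_{n+1},\sin\beta\,C_1,\dots,\sin\beta\,C_{n+1}$. First I would check that $\phi$ is a quadratic form between the correct spheres: for $|\overline x|=1$ one has $|F(\overline x)|^2=\cos^2\beta\,|F_1(\overline x)|^2+\sin^2\beta\,|F_2(\overline x)|^2=1$, so $\phi:\s^m\to\s^{2n+1}$.

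Next I would compute the matrix $S$ associated with $F$, which by definition is $S=\cos^2\beta\left(A_1^2+\dots+A_{n+1}^2\right)+\sin^2\beta\left(C_1^2+\dots+C_{n+1}^2\right)=\cos^2\beta\,S_1+\sin^2\beta\,S_2$, where $S_i$ is the corresponding matrix of $F_i$. Since $\phi_1,\phi_2$ have constant energy density, $S_1=\alpha_1 I_{m+1}$ and $S_2=\alpha_2 I_{m+1}$; and since $\phi_1,\phi_2$ are proper biharmonic, applying the first equation in \eqref{eq22} to each of them (with \eqref{eq15} and \eqref{eq18}) forces $4\alpha_1=4\alpha_2=m+5$. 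Hence $S=\left(\cos^2\beta\,\alpha_1+\sin^2\beta\,\alpha_2\right)I_{m+1}=\tfrac{m+5}{4}I_{m+1}$, so $\phi$ also has constant energy density and $\left|\dz F\right|^2=m+5$ on $\s^m$. Likewise, since the trace is additive, $\left|\Dz F\right|^2=4\left(\cos^2\beta\sum_i(\trace A_i)^2+\sin^2\beta\sum_i(\trace C_i)^2\right)$, and the second equation in \eqref{eq22} for $\phi_1$ and $\phi_2$ gives $4\sum_i(\trace A_i)^2=4\sum_i(\trace C_i)^2=2(m+1)^2$, whence $\left|\Dz F\right|^2=2(m+1)^2$.

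Having verified both conditions of \eqref{eq22}, Proposition \ref{p2} yields that $\phi$ is proper biharmonic, provided $\phi$ is admissible in the sense of that proposition, i.e. $\phi$ is non-constant (some defining matrix of $F$ is not a scalar multiple of $I_{m+1}$) and $\Dz F\neq 0$. The only point needing a small argument — and the only mild obstacle — is that these genericity hypotheses survive for every fixed $\beta$, including the degenerate values $\cos\beta=0$ or $\sin\beta=0$. If $\cos\beta\neq 0$, non-constancy of $\phi_1$ provides an $A_{i_0}$ that is not a scalar multiple of $I_{m+1}$, hence neither is $\cos\beta\,A_{i_0}$; and by Remark \ref{r1}, $\Dz F_1\neq 0$, so some $\trace A_{i_0}\neq 0$, giving $\Dz F\neq 0$. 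If $\cos\beta=0$ then $\sin\beta=\pm1$ and the same argument applies with $F_2$. Everything else is simply the observation that $S$, the trace vector, and all the norms entering \eqref{eq22} depend on $F_1,F_2$ only through quantities that combine additively with weights $\cos^2\beta$ and $\sin^2\beta$.
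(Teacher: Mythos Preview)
Your proof is correct and takes exactly the approach the paper intends: the paper states only that the proposition is ``a direct consequence of Proposition~\ref{p2}'' without further detail, and you have supplied precisely the verification that the two conditions of \eqref{eq22} pass to the convex combination $F=(\cos\beta\,F_1,\sin\beta\,F_2)$ via the weighted sums $S=\cos^2\beta\,S_1+\sin^2\beta\,S_2$ and $\left|\Dz F\right|^2=\cos^2\beta\left|\Dz F_1\right|^2+\sin^2\beta\left|\Dz F_2\right|^2$. Your additional care in checking the non-degeneracy hypotheses (non-constancy and $\Dz F\neq 0$) at the endpoint values of $\beta$ is a detail the paper leaves implicit.
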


In general, when the map $\phi:\s^m\rightarrow\s^n$ is the restriction of a quadratic form $F:\r^{m+1}\rightarrow\r^{n+1}$, because of the homogeneity, $F$ has to bear certain conditions on $\r^{m+1}$ (see \cite{W68}). The next three classification results rely on those properties of $F$.

\begin{theorem}\label{t47}
Let $F:\r^2\rightarrow\r^{n+1}$ be a quadratic form, $n \geq 2$, given by
$$
F(x,y) = \left(x^2, c^1y^2 + 2\gamma^1xy,\ldots, c^ny^2 + 2\gamma^nxy\right),
$$
such that
$$
\left(c^1\right)^2 + \dots + \left(c^n\right)^2 = 1, \quad c^1\gamma^1 + \dots + c^n\gamma^n = 0 \quad \textnormal{and} \quad \left(\gamma^1\right)^2 + \dots + \left(\gamma^n\right)^2 = \frac{1}{2}.
$$
Up to orthogonal transformations of the domain and/or the codomain, the map $\phi:\s^1\rightarrow\s^n$, obtained from the restriction of $F$, is the only proper biharmonic quadratic form from $\s^1$ to $\s^n$. Moreover, the image of $\phi$ is the circle of radius $1/\sqrt{2}$ of $\s^m$.
\end{theorem}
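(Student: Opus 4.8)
The plan is to combine R. Wood's description of quadratic forms between spheres with the criterion of Proposition \ref{p2}, exploiting that a quadratic form out of $\s^1$ automatically has constant energy density.

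\textbf{Step 1 (the map in the statement is proper biharmonic).} Writing $\overline x=(x,y)$, $X^t=[x\ y]$, the map $F$ is represented by the symmetric matrices $A_1=\mathrm{diag}(1,0)$ (for $x^2$) and $A_{i+1}=\left(\begin{smallmatrix}0&\gamma^i\\\gamma^i&c^i\end{smallmatrix}\right)$ (for $c^iy^2+2\gamma^ixy$), $i=1,\dots,n$. Using the three scalar constraints one computes $S=A_1^2+\dots+A_{n+1}^2=\tfrac32 I_2$, so $|\dz F|^2=4X^tSX=6$ on $\s^1$, while $\Dz F=-(2,2c^1,\dots,2c^n)\neq 0$ and $|\Dz F|^2=4\bigl(1+\sum_i(c^i)^2\bigr)=8$. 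Since $m=1$ gives $m+5=6$ and $2(m+1)^2=8$, Proposition \ref{p2} shows that $\phi$ is proper biharmonic.

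\textbf{Step 2 (uniqueness).} Take an arbitrary non-constant quadratic form $\psi:\s^1\to\s^n$. By Wood's characterization \cite{W68} — or directly, by expanding $\psi(\theta)=P+Q\cos 2\theta+R\sin 2\theta$ and imposing $|\psi|\equiv1$, which forces $P,Q,R$ mutually orthogonal with $|Q|=|R|=:\rho$ and $|P|^2+\rho^2=1$ — one may assume, after orthogonal transformations of domain and codomain, that $\psi$ is represented by $A_1=\sqrt{1-\rho^2}\,I_2$, $A_2=\rho\,\mathrm{diag}(1,-1)$, $A_3=\rho\left(\begin{smallmatrix}0&1\\1&0\end{smallmatrix}\right)$ and $A_j=0$ for $j\ge4$, with $\rho\in(0,1]$ the (extrinsic) radius of the circular image. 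Then $S=(1+\rho^2)I_2$ (so the energy density is constant), $|\dz F|^2=4(1+\rho^2)$, $\Dz F=-(4\sqrt{1-\rho^2},0,\dots,0)$ and $|\Dz F|^2=16(1-\rho^2)$. Applying Proposition \ref{p2} with $m=1$, the equation $|\dz F|^2=6$ forces $\rho^2=\tfrac12$, and then $|\Dz F|^2=8$ holds automatically and $\Dz F\neq0$, so this value of $\rho$ indeed yields a proper biharmonic map. Hence $\rho=1/\sqrt2$, the image of $\psi$ is a circle of radius $1/\sqrt2$, and a final orthogonal change of the codomain brings $\psi$ to the displayed form of $F$; the constraints $\sum(c^i)^2=1$, $\sum c^i\gamma^i=0$, $\sum(\gamma^i)^2=\tfrac12$ are precisely the conditions under which such a normalized $F$ maps $\s^1$ into $\s^n$, and any two admissible choices of $(c^i)$, $(\gamma^i)$ differ by an orthogonal transformation of $\r^n$, so the resulting maps are all equivalent.

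\textbf{Main obstacle.} The $2\times2$ matrix computations are routine; the one point requiring care is the passage to the normal form, in particular the choice of codomain isometry producing the first component $x^2$ — this is possible exactly when $\rho^2(1-\rho^2)=\tfrac14$, i.e. $\rho^2=\tfrac12$, so the very shape of $F$ in the statement already encodes the biharmonicity constraint. The degenerate case $\rho=1$ (great-circle image) is harmless, since there $\Dz F=0$ and hence, by Remark \ref{r1}, $\psi$ is harmonic, not proper biharmonic.
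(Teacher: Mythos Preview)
Your proof is correct, and your route differs from the paper's in the choice of normal form. The paper invokes Wood's reduction to write $F^1=x^2+\mu y^2$ with $-1\le\mu<1$ and $(F^2,\dots,F^{n+1})=B+2G$, obtains $S=\tfrac{3-\mu}{2}I_2$, and then Proposition~\ref{p2} forces $\mu=0$; the image circle of radius $1/\sqrt2$ is read off afterwards via the parametrization $\theta\mapsto F(\cos\theta,\sin\theta)$. You instead expand any quadratic $\psi:\s^1\to\s^n$ directly as $P+Q\cos2\theta+R\sin2\theta$, which is elementary and does not require citing \cite{W68}, and normalize by the image radius $\rho=|Q|=|R|$; then $S=(1+\rho^2)I_2$ and Proposition~\ref{p2} gives $\rho^2=\tfrac12$ immediately. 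Your parameter is geometrically the radius of the image circle, so the ``moreover'' clause comes for free, and your remark that the first component can be made equal to $x^2$ precisely when $\rho^2(1-\rho^2)=\tfrac14$ nicely explains why the displayed shape of $F$ already encodes biharmonicity. The paper's Wood-based approach, on the other hand, is the template that carries over to the higher-dimensional classifications in Theorems~\ref{t48} and~\ref{t4.6}, where your Fourier argument has no direct analogue.
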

\begin{proof}
From \cite{W68} we know that, by composing with isometries of domain and codomain, any non-constant quadratic form
$$
F = \left(F^1,F^2,\ldots,F^{n+1}\right):\r^2\rightarrow\r^{n+1}
$$
can be transformed such that
$$
F^1(x,y) = x^2 + \mu y^2, \quad -1\leq\mu<1, \ \forall (x,y)\in\r^2
$$
and
$$
\left(F^2,\ldots,F^{n+1}\right) = B + 2G,
$$
where $B$ is an $\r^n$-valued quadratic form in $y$ alone and $G$ is an $\r^n$-valued bilinear form in $x$ and $y$, such that
\begin{equation}\label{w1}
|B|^2 + \mu^2y^4 = y^4, \quad \langle B,G \rangle = 0, \quad \textnormal{and} \quad 2|G|^2 + \mu x^2y^2 = x^2y^2,
\end{equation}
for any $(x,y)\in\r^2$.
Thus, $B$ and $G$ are given by
$$
B = \left(c^1y^2,\ldots,c^ny^2\right) \quad \textnormal{and} \quad G = \left(\gamma^1xy,\ldots,\gamma^nxy\right),
$$
where $c^1, \ldots, c^n$ and $\gamma^1, \ldots, \gamma^n$ are real constants.
With the above notation, from Equation \eqref{w1}, comparing the coefficients of $y^4,$ $xy^3$ and $x^2y^2$, respectively, we obtain
\begin{align}\label{w2}
& \left(c^1\right)^2 + \left(c^2\right)^2 + \dots + \left(c^n\right)^2 = 1 - \mu^2, \nonumber \\
& c^1\gamma^1 + c^2\gamma^2 + \dots + c^n\gamma^n = 0, \\
& \left(\gamma^1\right)^2 + \left(\gamma^2\right)^2 + \dots + \left(\gamma^n\right)^2 = \frac{1-\mu}{2}. \nonumber
\end{align}

The map $F$ is given by the matrices

\begin{align*}
  A_1 = \begin{bmatrix}
             1 & 0 \\
             0 & \mu
         \end{bmatrix},
\quad
  A_{k+1} = \begin{bmatrix}
                 0 & \gamma^k \\
                 \gamma^k & c^k
             \end{bmatrix}, \quad k = 1,2,\ldots,n.
\end{align*}
Thus,
\begin{align}\label{w2.1}
  A_1^2 = \begin{bmatrix}
             1 & 0 \\
             0 & \mu^2
         \end{bmatrix},
\quad
  A_{k+1}^2 = \begin{bmatrix}
                 (\gamma^k)^2 & c^k\gamma^k \\
                 c^k\gamma^k & (c^k)^2 + (\gamma^k)^2
             \end{bmatrix}, \quad k = 1,2,\ldots,n.
\end{align}
Using Equations \eqref{w2} and \eqref{w2.1}, we obtain that
$$
S = \begin{bmatrix}
          \frac{3-\mu}{2} & 0 \\
          0 & \frac{3-\mu}{2}
    \end{bmatrix}.
$$
Since $S = (3-\mu)/2I_4$, from Proposition \ref{p2}, the map $\phi$ is proper biharmonic if and only if
\begin{equation*}
\alpha = \frac{3}{2} \quad \textnormal{and} \quad \left|\Dz F\right|^2 = 4\left( \left(\textnormal{tr}A_1\right)^2 + \dots + \left(\textnormal{tr}A_{n+1}\right)^2 \right) = 8.
\end{equation*}
From the first condition, we immediately see that $\mu$ must be $0$. Then, since
$$
\left(\textnormal{tr}A_1\right)^2 + \dots + \left(\textnormal{tr}A_{n+1}\right)^2 = 2 + 2\mu,
$$
we see that the second condition is automatically satisfied.
Thus, $F$ takes the form
$$
F(x,y) = \left(x^2, c^1y^2 + 2\gamma^1xy, \ldots, c^ny^2 + 2\gamma^nxy\right),
$$
with the conditions \eqref{w2} being satisfied.

If we use the polar coordinates of the unit circle $\s^1$, i.e. $x = \cos\theta$ and $y = \sin\theta$, we obtain the curve
$$
\rho(\theta) = F(\cos\theta, \sin\theta) = \left(\cos^2\theta, c^1\sin^2\theta + \gamma^1\sin(2\theta), \ldots, c^n\sin^2\theta + \gamma^n\sin(2\theta)\right).
$$
By direct computations
$$
\rho'(\theta) = \left(-2\sin(2\theta), c^1\sin(2\theta) + 2\gamma^1\cos(2\theta), \ldots, c^n\sin(2\theta) + 2\gamma^n\cos(2\theta)\right),
$$
and
$$
|\rho'(\theta)| = \sqrt{2}.
$$
If we make the parameter change $s = 2\theta$,  we obtain the curve
\begin{align*}
\tilde\rho(s) =& \left(\frac{1}{2}+\frac{1}{2}\cos s, c^1\left(\frac{1}{2}-\frac{1}{2}\cos s\right) + \gamma^1\sin s, \ldots, c^n\left(\frac{1}{2}-\frac{1}{2}\cos s\right) + \gamma^n\sin s \right) \\
              =& \left(\frac{1}{2}, \frac{c^1}{2}, \dots, \frac{c^n}{2}\right) + \cos s \left(\frac{1}{2}, -\frac{c^1}{2}, \dots, -\frac{c^n}{2}\right) + \sin s\left(0,\gamma^1, \dots, \gamma^n\right) \\
              =& \cos s\cdot \overline f_1 + \sin s\cdot \overline f_2 + \overline f_3,
\end{align*}
where
$$
\overline f_1\perp \overline f_2, \ \overline f_1\perp \overline f_3, \ \overline f_2\perp \overline f_3 \quad \textnormal{and} \quad |\overline f_1|^2 = |\overline f_2|^2 = |\overline f_3|^2 = \frac{1}{2}.
$$
\end{proof}

\begin{theorem}\label{t48}
There are no proper biharmonic quadratic forms from $\s^m$ to $\s^2$, $m\geq2$.
\end{theorem}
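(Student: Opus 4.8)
The plan is to assume $\phi:\s^m\to\s^2$, $m\ge2$, is proper biharmonic and reach a contradiction, working directly from the bitension field formula of Theorem \ref{t4}. Write $F=(X^tA_1X,X^tA_2X,X^tA_3X)$ with the $A_i$ symmetric and set $\mathbf t=(\operatorname{tr}A_1,\operatorname{tr}A_2,\operatorname{tr}A_3)$. Since $k=2$, by Remark \ref{r1} the map $\phi$ fails to be harmonic precisely when $\Dz F\ne0$, i.e.\ when $\mathbf t\ne0$, so the aim is to show $\mathbf t=0$. A key preliminary is a fullness lemma: for $m\ge2$, no nonzero real combination $\sum_i\nu_iA_i$ is a scalar matrix. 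Indeed, otherwise $\langle F,\nu\rangle$ is constant on $\s^m$; rotating the codomain so $\nu$ points along $e_3$ makes $F^3$ constant, hence $(F^1)^2+(F^2)^2=c\,(|X|^2)^2$ on $\r^{m+1}$ for a constant $c\ge0$, and since $|X|^2$ is a nondegenerate quadratic form in $m+1\ge3$ variables it is irreducible over $\mathbb C$, so factoring $(F^1+\i F^2)(F^1-\i F^2)$ forces $F^1$ and $F^2$ to be scalar multiples of $|X|^2$; then $\phi$ is constant, a contradiction. This is the one place where $m\ge2$ is essential.

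Next I would record the algebraic identities forced by $F(\s^m)\subseteq\s^2$. Polarizing $\sum_i(X^tA_iX)^2=|X|^4$ in $X$ gives $\sum_i(X^tA_iX)A_iX=|X|^2X$ and, after one contraction, $2S+T=(m+3)I_{m+1}$, where $S=\sum_iA_i^2$ and $T:=\sum_i(\operatorname{tr}A_i)A_i$; also $\operatorname{tr}T=\|\mathbf t\|^2$. Put $p:=X^tTX=\langle\mathbf t,F(X)\rangle$. Then $X^tSX=\tfrac12((m+3)|X|^2-p)$ and $X^tA_iSX=\tfrac12((m+3)F^i-X^tA_iTX)$, and substituting these into \eqref{eq20} (with $k=n=2$) and clearing, the equation $\tau_2(\phi)=0$ becomes, componentwise and after homogenizing with powers of $|X|^2$, the polynomial identities on $\r^{m+1}$
\begin{equation*}
4(X^tA_iTX)|X|^4=2p^2F^i-(m-3)pF^i|X|^2-2p(\operatorname{tr}A_i)|X|^4+(m+1)(\operatorname{tr}A_i)|X|^6,\qquad i=1,2,3.
\end{equation*}

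Then I would exploit that only three matrices are involved. Contracting these three identities against $\operatorname{tr}A_i$ (using $\sum_i(\operatorname{tr}A_i)(X^tA_iTX)=X^tT^2X$ and $\sum_i(\operatorname{tr}A_i)F^i=p$) yields one identity involving only $T$; diagonalizing $T$ by an orthogonal change of the domain coordinates and evaluating at the coordinate vectors shows every eigenvalue $\lambda$ of $T$ satisfies $(2\lambda-(m+1))(\lambda^2-\|\mathbf t\|^2)=0$, with $\operatorname{tr}T=\|\mathbf t\|^2$. Evaluating the individual identities at $e_j$ gives $(2\lambda_j-(m+1))(\lambda_j(A_i)_{jj}-\operatorname{tr}A_i)=0$ for all $i$, while $2S+T=(m+3)I_{m+1}$ becomes $\sum_{i,l}(A_i)_{jl}^2=\tfrac12((m+3)-\lambda_j)$, bounding the diagonal entries of the $A_i$; evaluating at $e_j+e_k$ produces the remaining relations among the $(A_i)_{jj}$ and $(A_i)_{jk}$. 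The fullness lemma rules out $T=\tfrac{m+1}{2}I_{m+1}$; running through the finitely many ways the three possible eigenvalues $\tfrac{m+1}{2},\ \|\mathbf t\|,\ -\|\mathbf t\|$ can be distributed among the coordinates, each is forced to collapse to $\|\mathbf t\|=0$, i.e.\ $\mathbf t=0$, contradicting properness.

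The hard part is this last step: keeping the eigenstructures of $T$ and of $S=\tfrac12((m+3)I_{m+1}-T)$ simultaneously consistent with $\operatorname{tr}T=\|\mathbf t\|^2$, with the entrywise bounds from $2S+T=(m+3)I_{m+1}$, and with fullness. Alternatively, one can short-circuit the casework by invoking Wood's structure theory of quadratic forms between spheres \cite{W68} (together with \cite{C98,Y86}): by the fullness lemma the image of $\phi$ lies in no circle of $\s^2$, such forms occur only for $m=2$ and $m=3$, and each reduces up to isometries to a short explicit normal form (for $m=3$, the harmonic Hopf fibration); substituting each into \eqref{eq20} then shows at once that none is proper biharmonic.
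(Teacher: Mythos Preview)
Your fallback route at the end—use Wood's bound to force $m\le 3$, invoke the classification of non-constant quadratic forms $\s^3\to\s^2$ and $\s^2\to\s^2$ due to Chang, and then feed the resulting explicit normal forms into \eqref{eq20}—is exactly the paper's argument (the paper also offers, for $m=2$, a self-contained enumeration via Wood's normal form, but the logic is the same). So as a complete proof, your proposal coincides with the paper's.

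Your primary route, by contrast, is genuinely different and more intrinsic. The fullness lemma is correct and the place where $m\ge2$ enters is well identified. The identity $2S+T=(m+3)I_{m+1}$ (which the paper does not isolate) is right, and the reduction of $\tau_2(\phi)=0$ to the homogeneous polynomial identities in $T$, together with the clean eigenvalue constraint $(2\lambda-(m+1))(\lambda^2-\|\mathbf t\|^2)=0$, is a nice piece of algebra. The gap is precisely where you flag it: you do not carry out the casework on the eigenvalue multiplicities of $T$. This is not a formality. For instance, in the case where every eigenvalue is $\pm\|\mathbf t\|$ one gets $(A_i)_{jj}=\pm\operatorname{tr}A_i/\|\mathbf t\|$ and $\|\mathbf t\|=b-c$ from $\operatorname{tr}T=\|\mathbf t\|^2$, but neither this, nor the diagonal bound $\sum_{i,l}(A_i)_{jl}^2=\tfrac12((m+3)-\lambda_j)$, nor fullness alone closes the argument; one must bring in the off-diagonal information from $e_j+e_k$ and likely the full cubic identity $\sum_i(X^tA_iX)A_iX=|X|^2X$ (not just its trace) to force $\|\mathbf t\|=0$. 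If you complete that analysis, your proof would be self-contained and would avoid the external classification theorems of \cite{W68} and \cite{C98} that the paper relies on; as written, only the alternative route is complete, and that one is the paper's.
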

\begin{proof}
We split the proof in two main cases.

\textit{Case I:} $m = 2$.

\noindent We can give a short proof of this case using the classification result of the non-constant quadratic forms from $\s^{2n-2}$ to $\s^n$, $n\geq 2$, obtained in \cite{C98}.

Indeed, acording to \cite{C98}, any non-constant quadratic form from $\s^2$ to $\s^2$ is, up to isometries of the domain and codomain, the restriction of the Hopf fribration $\s^3\rightarrow\s^2$ to a totally geodesic $\s^2\subset\s^3$. For example, if we consider $\s^2$ as being the intersection of $\s^3$ and the hyperplane $x^4 = 0$, then $\phi:\s^2\rightarrow\s^2$ is given by
$$
\phi(x^1,x^2,x^3) = \left((x^1)^2 - (x^2)^2 - (x^3)^2, 2x^1x^2, 2x^1x^3\right).
$$
But, it was proved in \cite{WOY14} that such a map is not biharmonic.

However, for the sake of completeness and as an application of Theorem \ref{t4} we present here a more direct proof of our theorem. Let $F:\r^3\rightarrow\r^3$ be a quadratic form such that its restriction $\phi:\s^2\rightarrow\s^2$ is not constant. Using \cite{W68}, we distinguish two subcases:

\emph{Case I.1}
$$
F(x^1, x^2, y) = \left(\left(x^1\right)^2 + \left(x^2\right)^2 + \mu y^2, c^1 y^2 + 2\gamma^1 x^1y + 2\gamma^2 x^2y, c^2 y^2 + 2\gamma^3 x^1y + 2\gamma^4 x^2y\right),
$$
for any $(x^1,x^2,y)\in\r^3$, with $-1\leq \mu <1$,

\emph{Case I.2}
\begin{align*}
F(x,y^1,y^2) = & \left(x^2 + \mu_1\left(y^1\right)^2  + \mu_2\left(y^2\right)^2,\right. \\
               & \left. c^1 \left(y^1\right)^2 + c^2 \left(y^2\right)^2 + 2a^1 y^1y^2 + 2\gamma^1 xy^1 + 2\gamma^2xy^2,\right.  \\
               & \left. c^3 \left(y^1\right)^2 + c^4 \left(y^2\right)^2 + 2a^2 y^1y^2 + 2\gamma^3 xy^1 + 2\gamma^4xy^2\right),
\end{align*}
for any $(x, y^1, y^3)\in\r^3$, with  $-1\leq \mu_1 <1$ and  $-1\leq \mu_2 <1$.

For each subcase we will find all the corresponding maps $F$ and then we will verify the biharmonicity condition from Theorem \ref{t4} or Proposition \ref{p2}.

For \emph{Case I.1}, we denote
$$
\left(F^2,F^3\right) = B + 2G,
$$
where $B$ is an $\r^2$-valued quadratic form in $y$ alone and $G$ is an $\r^2$-valued bilinear form in $\overline x = \left(x^1, x^2\right)$ and $y$, such that
\begin{equation}\label{w3}
|B|^2 + \mu^2y^4 = y^4, \quad \langle B,G \rangle = 0, \quad \textnormal{and} \quad 2|G|^2 + \mu |x|^2y^2 = |x|^2y^2,
\end{equation}
for any $(\overline x,y)\in\r^3$.
Thus, $B$ and $G$ are given by
$$
B = \left(c^1y^2, c^2y^2\right) \quad \textnormal{and} \quad G = \left(\gamma^1x^1y + \gamma^2x^2y,\gamma^3x^1y + \gamma^4x^2y\right),
$$
where $c^1$, $c^2$ and $\gamma^1$, $\gamma^2$, $\gamma^3$, $\gamma^4$ are real constants.
With the above notation, from Equation \eqref{w3}, comparing the coefficients, we obtain
\begin{align}\label{w4}
& \left(c^1\right)^2 + \left(c^2\right)^2 = 1 - \mu^2, & c^1\gamma^1 + c^2\gamma^3 = 0, \nonumber \\
& c^1\gamma^2 + c^2\gamma^4 = 0, & \gamma^1\gamma^2 + \gamma^3\gamma^4 = 0, \\
& \left(\gamma^2\right)^2 + \left(\gamma^4\right)^2 = \frac{1-\mu}{2}, & \left(\gamma^1\right)^2 + \left(\gamma^3\right)^2 = \frac{1-\mu}{2}. \nonumber
\end{align}

If we denote the vectors $\overline v_1 = (c^1, c^2)$, $\overline v_2 = (\gamma^1, \gamma^3)$ and $\overline v_3 = (\gamma^2, \gamma^4)$, by looking at \eqref{w4} we observe that $\overline v_1\perp \overline v_2$, $\overline v_1\perp \overline v_3$  and $\overline v_2\perp \overline v_3$. Since $-1\leq\mu<1$, from the same equations it follows that $\overline v_2 \neq \overline{0}$ and $\overline v_3 \neq \overline{0}$. These imply that $\overline v_1 = \overline{0}$, and further, because $|\overline v_1|^2 = 1 - \mu^2$, we obtain $\mu = -1$.

If we replace $\mu = -1$ in Equations \eqref{w1}, we get
$$
c^1 = c^2 = 0, \quad \gamma^1 = \cos \theta, \ \gamma^2 = -\sin \theta, \ \gamma^3 = \sin \theta \textnormal{ and } \gamma^4 = \cos \theta.
$$
Therefore,
\begin{align*}
F(x^1,x^2,y) = \left((x^1)^2 + (x^2)^2 - y^2, 2\cos\theta x^1y - 2\sin\theta x^2y, 2\sin\theta x^1y + 2\cos\theta x^2y\right).
\end{align*}
The map $\phi$ is nothing but the restriction to $\s^2$ of the Hopf fibration from $\s^3$ onto $\s^2$.

The map $F$ is given by the matrices
\begin{align*}
  A_1 =& \begin{bmatrix}
             1 & 0 & 0\\
             0 & 1 & 0\\
             0 & 0 &-1
         \end{bmatrix},
\quad
  A_2 = \begin{bmatrix}
                 0 & 0 & \cos\theta \\
                 0 & 0 & -\sin\theta \\
                 \cos\theta & -\sin\theta & 0
             \end{bmatrix},
\quad
  A_3 = \begin{bmatrix}
                 0 & 0 & \sin\theta \\
                 0 & 0 & \cos\theta \\
                 \sin\theta & \cos\theta & 0
             \end{bmatrix}.
\end{align*}
Next, since
\begin{align*}
  A_1^2 = \begin{bmatrix}
             1 & 0 & 0\\
             0 & 1 & 0\\
             0 & 0 &\mu^2
         \end{bmatrix},
\quad
  A_2^2 = \begin{bmatrix}
                 \cos^2\theta & -\cos\theta\sin\theta & 0 \\
                 -\cos\theta\sin\theta & \sin^2\theta & 0 \\
                0 & 0 & 1
             \end{bmatrix},
\end{align*}
and
\begin{align*}
  A_3^2 = \begin{bmatrix}
                 \sin^2\theta & \cos\theta\sin\theta & 0 \\
                \cos\theta\sin\theta & \cos^2\theta & 0 \\
                 0 & 0 & 1
             \end{bmatrix},
\end{align*}
using Equations \eqref{w4}, it follows that
\begin{align*}
  S = \begin{bmatrix}
                 2 & 0 & 0 \\
                 0 & 2 & 0 \\
                 0 & 0 & 3
             \end{bmatrix} \quad \textnormal{and}
\quad
  A_1S = \begin{bmatrix}
                 2 & 0 & 0 \\
                 0 & 2 & 0 \\
                 0 & 0 & -3
             \end{bmatrix}.
\end{align*}
Moreover,
$$
\left(\textnormal{tr}A_1\right)^2 + \left(\textnormal{tr}A_2\right)^2 + \left(\textnormal{tr}A_3\right)^2 = 1 \quad \textnormal{and} \quad \textnormal{tr}S = 7.
$$

Next, in order to use the biharmonicity condition, we replace the above results in expression of the first component of $\tau_2(\phi)$, given by Theorem \ref{t4}. We obtain
\begin{align*}
\left(\tau_2(\phi)\right)^1 =& -4\left(7-4\left(2(x^1)^2 + 2(x^2)^2 + 3y^2 \right)\right)\\
                             & + \left(140 - 168\left(2(x^1)^2 + 2(x^2)^2 + 3y^2 \right) + 32\left(2(x^1)^2 + 2(x^2)^2 + 3y^2 \right)^2\right)\cdot\\
                             & \cdot \left((x^1)^2 + (x^2)^2 - y^2 \right) + 32\left(2(x^1)^2 + 2(x^2)^2 - 3y^2 \right).
\end{align*}
Evaluating $\left(\tau_2(\phi)\right)^1$ at $(\overline x,y) = \left(1/2, 1/2, \sqrt{2}/2\right)$, we get
$$
\left(\tau_2(\phi)\right)^1\left(\frac{1}{2}, \frac{1}{2}, \frac{\sqrt{2}}{2}\right) = -4,
$$
thus we conclude that $\phi$ cannot be biharmonic.

For \emph{Case I.2}, we denote
$$
\left(F^2,F^3\right) = B + 2G,
$$
where $B$ is an $\r^2$-valued quadratic form in $\overline y = \left(y^1, y^2\right)$ alone and $G$ is an $\r^2$-valued bilinear form in $x$ and $\overline y$, such that
\begin{align}\label{w5}
|B|^2 + \left(\mu_1(y^1)^2 + \mu_2(y^2)^2\right)^2  = |y|^4 \nonumber\\
\langle B,G \rangle = 0 \\
2|G|^2 + x^2\left(\mu_1(y^1)^2 + \mu_2(y^2)^2\right) = x^2|y|^2, \nonumber
\end{align}
for any $(x,y)\in\r^3$.
Thus, $B$ and $G$ are given by
$$
B = \left(c^1\left(y^1\right)^2 + c^2\left(y^1\right)^2 + 2a^1y^1y^2, c^3\left(y^1\right)^2 + c^4\left(y^1\right)^2 + 2a^2y^1y^2\right)
$$
and
$$
G = \left(\gamma^1xy^1 + \gamma^2xy^2,\gamma^3xy^1 + \gamma^4xy^2\right),
$$
where $c^1$, $c^2$, $c^3$, $c^4$, $a^1$, $a^2$ and $\gamma^1$, $\gamma^2$, $\gamma^3$, $\gamma^4$ are real constants.
With the above notation, from Equation \eqref{w3}, comparing the coefficients, we obtain
\begin{align}\label{w6}
&\left(c^1\right)^2 + \left(c^3\right)^2 = 1 - \mu_1^2,        & \left(c^2\right)^2 + \left(c^4\right)^2 = 1 - \mu_2^2, \nonumber\\
&2\left(a^1\right)^2 + 2\left(a^2\right)^2 + c^1c^2 + c^3c^4 = 1 - \mu_1\mu_2,       & a^1c^2 + a^2c^4 = 0, \nonumber \\
&c^2\gamma^1 + c^4\gamma^3 + 2a^1\gamma^2 + 2a^2\gamma^4 = 0,  & a^1c^1 + a^2c^3 = 0, \\
&c^1\gamma^2 + c^3\gamma^4 + 2a^1\gamma^1 + 2a^2\gamma^3 = 0,  & c^1\gamma^1 + c^3\gamma^3 = 0, \nonumber\\
&\left(\gamma^1\right)^2 + \left(\gamma^3\right)^2 = \frac{1-\mu_1}{2},              & c^2\gamma^2 + c^4\gamma^4 = 0, \nonumber \\
&\left(\gamma^2\right)^2 + \left(\gamma^4\right)^2 = \frac{1-\mu_2}{2},              & \gamma^1\gamma^2 + \gamma^3\gamma^4 = 0. \nonumber
\end{align}

By direct computations we get that the only solution of \eqref{w6} is $\mu_1 = \mu_2 = -1$ and
\begin{align*}
&c^1 = c^2 = c^3 = c^4 = 0, \\
&a^1 = a^2 = 0, \\
&\gamma^1 = \cos \theta, \ \gamma^2 = \sin \theta, \\
&\gamma^3 = -\sin \theta, \ \gamma^4 = \cos \theta.
\end{align*}
Thus, the map $F$ takes the form
$$
F\left(x,y^1,y^2\right) =  \left(x^2 - \left(y^1\right)^2 - \left(y^2\right)^2, 2\cos\theta xy^1 + 2\sin\theta, -2\sin\theta xy^1 + 2\cos\theta xy^2\right).
$$
As $\phi$ is the restriction of Hopf fibration we conclude, as in the first case, that $\phi$ cannot be biharmonic.

\textit{Case II:} $m > 2$.

\noindent For $m > 2$, we use the result in \cite{W68} that says that any quadratic form from $\s^m$ to $\s^n$, with $m\geq 2n$, is constant. It follows that the only interesting situation is given by $m = 3$. It was proved in \cite{C98} that any non-constant quadratic form from $\s^3$ and $\s^2$, up to isometries of the domain and codomain, is the Hopf fribration $\s^3\rightarrow\s^2$. But it is well known that the Hopf fibration is harmonic.
\end{proof}

\begin{theorem}\label{t4.6}
Up to homothetic transformations of the domain and/or codomain, the only proper biharmonic quadratic form from $\s^m$ to $\s^3$, $m\geq2$, is the Hopf fibration $\psi:\s^3\rightarrow\s^2$ followed by the inclusion, as described in example \eqref{ec1}.
\end{theorem}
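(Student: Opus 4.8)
The plan is to follow the template used in the proof of Theorem \ref{t48}, combining R. Wood's normal form for quadratic forms between spheres \cite{W68} with the bitension field formula of Theorem \ref{t4} (equivalently, with Proposition \ref{p2} in the constant energy density regime). First I would dispose of most dimensions at once: by \cite{W68} every quadratic form from $\s^m$ to $\s^n$ with $m\geq 2n$ is constant, so since $n=3$ and $\phi$ is assumed non-constant only $m\in\{2,3,4,5\}$ need be examined. For each such $m$, Wood's structure theorem lets me assume, after composing with orthogonal transformations of domain and codomain, that $\overline x=(x^1,\dots,x^p,y^1,\dots,y^q)$ with $p+q=m+1$, that $F^1=\sum_{i=1}^{p}(x^i)^2+\sum_{j=1}^{q}\mu_j(y^j)^2$ with $-1\leq\mu_j<1$, and that $(F^2,F^3,F^4)=B+2G$ where $B$ is an $\r^3$-valued quadratic form in $\overline y=(y^1,\dots,y^q)$ alone and $G$ is an $\r^3$-valued bilinear form in $(x^1,\dots,x^p)$ and $\overline y$, subject to the compatibility relations $|B|^2+\big(\sum_j\mu_j(y^j)^2\big)^2=|\overline y|^4$, $\langle B,G\rangle=0$, and $2|G|^2+|\overline x|^2\sum_j\mu_j(y^j)^2=|\overline x|^2|\overline y|^2$ for all $\overline x,\overline y$. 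Comparing coefficients in these identities, exactly as in \eqref{w2}, \eqref{w4} and \eqref{w6}, reduces each subcase (indexed by $p$, $q$ and the multiplicities of the $\mu_j$) to an explicit finite system of quadratic equations whose solutions parametrize all admissible $F$.

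Next, for each admissible $F$ I would write down the symmetric matrices $A_1,\dots,A_4$ with $F(\overline x)=(X^tA_1X,\dots,X^tA_4X)$, form $S=A_1^2+A_2^2+A_3^2+A_4^2$ together with the scalars $\operatorname{tr}A_\ell$, and then split into two regimes. If $S=\alpha I_{m+1}$ — equivalently $|\d\phi|^2$ constant — Proposition \ref{p2} applies and $\phi$ is proper biharmonic precisely when $\left|\dz F\right|^2=4\alpha=m+5$ and $\left|\Dz F\right|^2=4\sum_\ell(\operatorname{tr}A_\ell)^2=2(m+1)^2$; testing these two numerical constraints against the solution of the coefficient system singles out the surviving candidates. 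If $S\neq\alpha I_{m+1}$, biharmonicity forces $\tau_2(\phi)\equiv 0$, so I would substitute the data into the expression \eqref{eq20} for $\tau_2(\phi)$ and evaluate a convenient component at a cleverly chosen unit vector $\overline x_0\in\s^m$ — the analogue of the point $(1/2,1/2,\sqrt2/2)$ used in Case I.1 of Theorem \ref{t48} — obtaining a nonzero value and hence a contradiction. In every branch except $m=3$ with $F$ the Hopf fibration $\s^3\to\s^2\subset\s^3$ this rules the map out; and for that single map Theorem \ref{t1}, applied to the harmonic Hopf fibration $\s^3\to\s^2(1/\sqrt2)$ which has constant energy density, shows $\phi$ is proper biharmonic and coincides with the map in \eqref{ec1}, completing the classification. (As the introduction notes, one could alternatively invoke the explicit lists of quadratic forms $\s^m\to\s^3$ of Chang \cite{C98} and Yiu \cite{Y86} instead of re-deriving them from \cite{W68}; the direct route above is self-contained.)

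I expect the bookkeeping for $m=4$ and $m=5$ to be the main obstacle. For these values Wood's normal form admits several inequivalent splittings $p+q=m+1$ and several patterns of coincident or distinct $\mu_j$, so there are many subcases, each carrying its own quadratic system; keeping the case list exhaustive, and showing in every branch that $F$ is either forced to be constant, or fails the normalization $|\overline x|=1\Rightarrow|F(\overline x)|=1$, or violates the biharmonicity test, is the delicate part. A secondary technical point is to verify in the constant energy density regime that the two conditions of Proposition \ref{p2} are genuinely incompatible with the coefficient relations for all $m\in\{4,5\}$ and for $m=3$ outside the Hopf case, so that no spurious biharmonic example slips through.
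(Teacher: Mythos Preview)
Your plan matches the paper's approach almost exactly for $m=2$ and $m=3$: Wood's normal form, the resulting coefficient systems, then Theorem \ref{t4}/Proposition \ref{p2} to test biharmonicity, with the Hopf-fibration-followed-by-inclusion surviving in Case $m=3$ (your split into $p+q=m+1$ subcases corresponds to the paper's Cases II.1, II.2, II.3). The one substantive divergence is for $m=4,5$. You anticipate this as ``the main obstacle'' with many subcases to grind through; the paper sidesteps it entirely by invoking two non-existence results of Chang \cite{C98}: a non-constant quadratic form $\s^{2n-1}\to\s^n$ forces $n\in\{1,2,4,8\}$, and a non-constant quadratic form $\s^{2n-2}\to\s^n$ ($n\geq 2$) forces $n\in\{2,4,8\}$. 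For $n=3$ both fail, so there are \emph{no} non-constant quadratic forms $\s^5\to\s^3$ or $\s^4\to\s^3$ at all, and the biharmonicity analysis for $m=4,5$ never begins. Your direct route via Wood would work in principle, but the bookkeeping you flag as delicate is simply unnecessary; you even name \cite{C98} as an alternative source, so the fix is just to use it here rather than only for the explicit lists.
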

\begin{proof}

As for the previous theorem we split the proof in three main cases.

\textit{Case I:} $m = 2$.

\noindent Using the same approach as in the second method of \textit{Case II} below and Theorem 4.1, we can prove that there is no biharmonic map from $\s^2$ to $\s^3$.

\textit{Case II:} $m = 3$.

\noindent We can give a short proof of this case using the classification result of the non-constant quadratic forms from $\s^3$ to $\s^3$, obtained in \cite{Y86}.

Indeed, acording to \cite{Y86}, any non-constant quadratic form from $\s^3$ to $\s^3$ is, up to isometries of the domain and codomain, either the Hopf construction $\s^3\rightarrow\s^3$, or the composition between the Hopf fibration $\s^3\rightarrow\s^2$ and, after modifying the radius, the canonical inclusion $\s^2(a)\rightarrow\s^3$, $a\in(0,1]$. Using Theorem \ref{t4}, one can prove that the Hopf construction cannot be biharmonic. For the second map, we can use Theorem \ref{t4} or Theorem 2.1 in \cite{O03} to prove that it is proper biharmonic if and only if $a = 1/\sqrt{2}$.

As in the previous proof, for the sake of completeness, we present here a more direct proof of our theorem. Let $F:\r^4\rightarrow\r^4$ be a quadratic form such that its restriction $\phi:\s^3\rightarrow\s^3$ is not constant. Using \cite{W68}, we distinguish three subcases:

\emph{Case II.1}
\begin{align*}
F\left(x^1, x^2, x^3, y\right) =& \left(\left(x^1\right)^2 + \left(x^2\right)^2 + \left(x^3\right)^2 + \mu y^2,\right. \\
                     & \left. c^1 y^2 + 2\gamma_1^1 x^1y + 2\gamma_2^1 x^2y + 2\gamma_3^1 x^3y,\right.\\
                     & \left. c^2 y^2 + 2\gamma_1^2 x^1y + 2\gamma_2^2 x^2y + 2\gamma_3^2 x^3y,\right.\\
                     & \left. c^3 y^2 + 2\gamma_1^3 x^1y + 2\gamma_2^3 x^2y + 2\gamma_3^3 x^3y\right),
\end{align*}
for any $\left(x^1,x^2,x^3,y\right)\in\r^4$, with $-1\leq \mu <1$,

\emph{Case II.2}
\begin{align*}
F&\left(x^1,x^2,y^1,y^2\right) =\\
            =&  \left(\left(x^1\right)^2 + \left(x^2\right)^2 + \mu_1\left(y^1\right)^2  + \mu_2\left(y^2\right)^2,\right. \\
             & \left. c_1^1 \left(y^1\right)^2 + c_2^1 \left(y^2\right)^2 + 2a^1 y^1y^2 + 2\gamma_1^1 x^1y^1 + 2\gamma_2^1x^1y^2 + 2\gamma_3^1 x^2y^1 + 2\gamma_4^1x^2y^2,\right.\\
             & \left. c_1^2 \left(y^1\right)^2 + c_2^2 \left(y^2\right)^2 + 2a^2 y^1y^2 + 2\gamma_1^2 x^1y^1 + 2\gamma_2^2x^1y^2 + 2\gamma_3^2 x^2y^1 + 2\gamma_4^2x^2y^2,\right.\\
             & \left. c_1^3 \left(y^1\right)^2 + c_2^3 \left(y^2\right)^2 + 2a^3 y^1y^2 + 2\gamma_1^3 x^1y^1 + 2\gamma_2^3x^1y^2 + 2\gamma_3^3 x^2y^1 + 2\gamma_4^3x^2y^2\right),
\end{align*}
for any $\left(x^1, x^2, y^1, y^3\right)\in\r^4$, with  $-1\leq \mu_1 <1$ and  $-1\leq \mu_2 <1$,

\emph{Case II.3}
\begin{align*}
F\left(x,y^1,y^2,y^3\right) = & \left(x^2 + \mu_1\left(y^1\right)^2  + \mu_2\left(y^2\right)^2 + \mu_3\left(y^3\right)^2,\right. \\
             & \left. c_1^1 \left(y^1\right)^2 + c_2^1 \left(y^2\right)^2 + c_3^1 \left(y^3\right)^2 + 2a_1^1 y^1y^2 + 2a_2^1 y^1y^3 + 2a_3^1y^2y^3 \right.\\
             & \left. + 2\gamma_1^1 xy^1 + 2\gamma_2^1xy^2 + 2\gamma_3^1 xy^3,\right.\\
             & \left. c_1^2 \left(y^1\right)^2 + c_2^2 \left(y^2\right)^2 + c_3^2 \left(y^3\right)^2 + 2a_1^2 y^1y^2 + 2a_2^2 y^1y^3 + 2a_3^2y^2y^3 \right. \\
             & \left. + 2\gamma_1^2 xy^1 + 2\gamma_2^2xy^2 + 2\gamma_3^2 xy^3,\right.\\
             & \left. c_1^3 \left(y^1\right)^2 + c_2^3 \left(y^2\right)^2 + c_3^3 \left(y^3\right)^2 + 2a_1^3 y^1y^2 + 2a_2^3 y^1y^3 + 2a_3^3y^2y^3 \right. \\
             & \left. + 2\gamma_1^3 xy^1 + 2\gamma_2^3xy^2 + 2\gamma_3^3 xy^3\right),
\end{align*}
for any $\left(x^1, x^2, y^1, y^3\right)\in\r^4$, with  $-1\leq \mu_1 <1$, $-1\leq \mu_2 <1$  and  $-1\leq \mu_3 <1$.

For each subcase we will first determine all the corresponding maps $F$ and then we will check the biharmonicity condition from Theorem \ref{t4} or Proposition \ref{p2}.

For \emph{Case II.1}, we denote
$$
\left(F^2,F^3,F^4\right) = B + 2G,
$$
where $B$ is an $\r^3$-valued quadratic form in $y$ alone and $G$ is an $\r^3$-valued bilinear form in $\overline x = \left(x^1, x^2, x^3\right)$ and $y$, such that
\begin{equation}\label{w7}
|B|^2 + \mu^2y^4 = y^4, \quad \langle B,G \rangle = 0, \quad \textnormal{and} \quad 2|G|^2 + \mu |x|^2y^2 = |x|^2y^2,
\end{equation}
for any $(\overline x,y)\in\r^4$.
Thus, $B$ and $G$ are given by
$$
B = \left(c^1y^2, c^2y^2, c^3y^3\right)
$$
and
$$
G = \left(\gamma_1^1x^1y + \gamma_2^1x^2y + \gamma_3^1x^3y, \gamma_1^2x^1y + \gamma_2^2x^2y + \gamma_3^2x^3y, \gamma_1^3x^1y + \gamma_2^3x^2y + \gamma_3^3x^3y\right),
$$
where $c^i$ and $\gamma_j^i$, for $i,j\in\{1,2,3\}$, are real constants.
With the above notation, from Equation \eqref{w7}, comparing the coefficients, we obtain
\begin{align}\label{w8}
& \left(c^2\right)^2 + \left(c^2\right)^2 + \left(c^3\right)^2 = 1 - \mu^2, & c^1\gamma_1^1 + c^2\gamma_1^2 + c^3\gamma_1^3 = 0, \nonumber\\
&\left(\gamma_2^1\right)^2 + \left(\gamma_2^2\right)^2 + \left(\gamma_2^3\right)^2 = \frac{1-\mu}{2},  & c^1\gamma_3^1 + c^2\gamma_3^2 + c^3\gamma_3^3 = 0, \nonumber\\
& \left(\gamma_1^1\right)^2 + \left(\gamma_1^2\right)^2 + \left(\gamma_1^3\right)^2 = \frac{1-\mu}{2}, & c^1\gamma_2^1 + c^2\gamma_2^2 + c^3\gamma_2^3 = 0, \\
& \left(\gamma_3^1\right)^2 + \left(\gamma_3^2\right)^2 + \left(\gamma_3^3\right)^2 = \frac{1-\mu}{2}, & \gamma_1^1\gamma_2^1 + \gamma_1^2\gamma_2^2 + \gamma_1^3\gamma_2^3 = 0, \nonumber\\
& \gamma_1^1\gamma_3^1 + \gamma_1^2\gamma_3^2 + \gamma_1^3\gamma_3^3 = 0,   & \gamma_2^1\gamma_3^1 + \gamma_2^2\gamma_3^2 + \gamma_2^3\gamma_3^3 = 0 \nonumber.
\end{align}

If we denote the vectors
$$
\overline c = (c^1, c^2, c^3), \ \overline \gamma_1 = (\gamma_1^1, \gamma_1^2, \gamma_1^3), \ \overline \gamma_2 = (\gamma_2^1, \gamma_2^2, \gamma_2^3), \ \overline \gamma_3 = (\gamma_3^1, \gamma_3^2, \gamma_3^3),
$$
by looking at \eqref{w8} we observe that
$$
\overline c\perp \overline \gamma_1, \ \overline c\perp \overline \gamma_2, \ \overline c\perp \overline \gamma_3
$$
and
$$
\overline \gamma_1\perp \overline \gamma_2, \ \overline \gamma_1\perp \overline \gamma_3, \ \overline \gamma_2\perp \overline \gamma_3.
$$
Since $-1\leq\mu<1$, from the same equations it follows that $\overline \gamma_1 \neq \overline{0}$, $\overline \gamma_2 \neq \overline{0}$ and $\overline \gamma_3 \neq \overline{0}$. These imply that $\overline c = \overline{0}$, and further, because $|\overline c|^2 = 1 - \mu^2$, we obtain $\mu = -1$.

Acting with an isometry of $\r^3\equiv\{0\}\times\r^3$, the map $F$ keeps the same type of expression and we can assume that the vectors $\overline \gamma_i$ have the form
$$
\overline \gamma_1 = (1,0,0), \quad \overline \gamma_2 = (0,1,0), \quad \overline \gamma_3 = (0,0,1),
$$
and thus
$$
F\left(x^1, x^2, x^3, y\right) = \left(\left(x^1\right)^2 + \left(x^2\right)^2 + \left(x^3\right)^2 - y^2, 2x^1y , 2x^2y, 2x^3y\right).
$$
We can see that the map $\phi$ is nothing but the restriction to $\s^3$ of the quaternionic multiplication $q\rightarrow q^2$, i.e.
$\phi$ is the Hopf construction coresponding to the quaternions.

The map $F$ is given by the matrices

\begin{align*}
  A_1 = \begin{bmatrix}
             1 & 0 & 0 & 0\\
             0 & 1 & 0 & 0\\
             0 & 0 & 1 & 0\\
             0 & 0 & 0 & -1
         \end{bmatrix},
\quad
  A_2 = \begin{bmatrix}
             0 & 0 & 0 & 1\\
             0 & 0 & 0 & 0\\
             0 & 0 & 0 & 0\\
             1 & 0 & 0 & 0
         \end{bmatrix},
\end{align*}
\begin{align*}
  A_3 = \begin{bmatrix}
             0 & 0 & 0 & 0\\
             0 & 0 & 0 & 1\\
             0 & 0 & 0 & 0\\
             0 & 1 & 0 & 0
         \end{bmatrix},
\quad
  A_4 = \begin{bmatrix}
             0 & 0 & 0 & 0\\
             0 & 0 & 0 & 0\\
             0 & 0 & 0 & 1\\
             0 & 0 & 1 & 0
         \end{bmatrix},
\end{align*}
Next, since
\begin{align*}
  A_1^2 = \begin{bmatrix}
             1 & 0 & 0 & 0\\
             0 & 1 & 0 & 0\\
             0 & 0 & 1 & 0\\
             0 & 0 & 0 & 1
         \end{bmatrix},
\quad
  A_2^2 = \begin{bmatrix}
             1 & 0 & 0 & 0\\
             0 & 0 & 0 & 0\\
             0 & 0 & 0 & 0\\
             0 & 0 & 0 & 1
         \end{bmatrix},
\end{align*}
\begin{align*}
  A_3^2 = \begin{bmatrix}
             0 & 0 & 0 & 0\\
             0 & 1 & 0 & 0\\
             0 & 0 & 0 & 0\\
             0 & 0 & 0 & 1
         \end{bmatrix},
\quad
  A_4^2 = \begin{bmatrix}
             0 & 0 & 0 & 0\\
             0 & 0 & 0 & 0\\
             0 & 0 & 1 & 0\\
             0 & 0 & 0 & 1
         \end{bmatrix},
\end{align*}
using Equations \eqref{w4}. it follows that
\begin{align*}
  S = \begin{bmatrix}
                 2 & 0 & 0 & 0 \\
                 0 & 2 & 0 & 0 \\
                 0 & 0 & 2 & 0 \\
                 0 & 0 & 0 & 4
        \end{bmatrix}
\quad \textnormal{ and } \quad
A_1S = \begin{bmatrix}
                 2 & 0 & 0 & 0 \\
                 0 & 2 & 0 & 0 \\
                 0 & 0 & 2 & 0 \\
                 0 & 0 & 0 & -4
           \end{bmatrix}.
\end{align*}
Moreover,
$$
\left(\textnormal{tr}A_1\right)^2 + \left(\textnormal{tr}A_2\right)^2 + \left(\textnormal{tr}A_3\right)^2 + \left(\textnormal{tr}A_4\right)^2 = 4 \quad \textnormal{and} \quad \textnormal{tr}S = 10.
$$

Next, we replace the above results in expression of the first component of $\tau_2(\phi)$, given by Proposition \ref{p2}. We obtain
\begin{align*}
\left(\tau_2(\phi)\right)^1 =& -8\left(8 - 4\left(2(x^1)^2 + 2(x^2)^2 + 2(x^3)^2 + 4y^2 \right)\right)\\
                             & + \left(192 - 192\left(2(x^1)^2 + 2(x^2)^2 + 2(x^3)^2 + 4y^2 \right) \right. \\
                             & \left.+ 32\left(2(x^1)^2 + 2(x^2)^2 2(x^3)^2 + 4y^2 \right)^2\right) \left((x^1)^2 + (x^2)^2 + (x^3)^2 - y^2 \right)\\
                             & + 32\left(2(x^1)^2 + 2(x^2)^2 2(x^3)^2 - 4y^2 \right).
\end{align*}
Evaluating $\left(\tau_2(\phi)\right)^1$ at $(\overline x,y) = \left(1/2, 1/2, 1/2, 1/2\right)$, we get
$$
\left(\tau_2(\phi)\right)^1\left(\frac{1}{2}, \frac{1}{2}, \frac{1}{2}, \frac{1}{2}\right) = -12,
$$
thus we conclude that $\phi$ cannot be biharmonic.

For \emph{Case II.2}, we denote
$$
(F^2,F^3,F^4) = B + 2G,
$$
where $B$ is an $\r^3$-valued quadratic form in $\overline y = (y^1,y^2)$ alone and $G$ is an $\r^3$-valued bilinear form in $\overline x = (x^1, x^2)$ and $\overline y$, such that
\begin{align}\label{w9}
&|B|^2 + \left(\mu_1(y^1)^2 + \mu_2(y^2)^2\right)^2 = y^4,\nonumber\\
&\langle B,G \rangle = 0,\\
&2|G|^2 + |x|^2\left(\mu_1(y^1)^2 + \mu_2(y^2)^2\right) = |x|^2|y|^2,\nonumber
\end{align}
for any $(\overline x, \overline y)\in\r^4$.

Thus, $B$ and $G$ are given by
\begin{align*}
B =& \left( c_1^1 \left(y^1\right)^2 + c_2^1 \left(y^2\right)^2 + 2a^1 y^1y^2,\right.\\
   & \left. c_1^2 \left(y^1\right)^2 + c_2^2 \left(y^2\right)^2 + 2a^2 y^1y^2,\right.\\
   & \left. c_1^3 \left(y^1\right)^2 + c_2^3 \left(y^2\right)^2 + 2a^3 y^1y^2 \right)
\end{align*}
and
\begin{align*}
G =& \left( \gamma_1^1 x^1y^1 + \gamma_2^1x^1y^2 + \gamma_3^1 x^2y^1 + \gamma_4^1x^2y^2,\right.\\
   & \left. \gamma_1^2 x^1y^1 + \gamma_2^2x^1y^2 + \gamma_3^2 x^2y^1 + \gamma_4^2x^2y^2,\right.\\
   & \left. \gamma_1^3 x^1y^1 + \gamma_2^3x^1y^2 + \gamma_3^3 x^2y^1 + \gamma_4^3x^2y^2 \right),
\end{align*}
where $c_j^i$ and $a^i$, for $i,j\in\{1,2,3\}$, and $\gamma_j^i$, for $i\in\{1,2,3\}$ and $j\in\{1,2,3,4\}$, are real constants.
With the above notation, from Equation \eqref{w9}, comparing the coefficients, we obtain
\begin{align}\label{w10.1}
& (c_1^1)^2 + (c_1^2)^2 + (c_1^3)^2 = 1 - \mu_1^2,                        & (c_2^1)^2 + (c_2^2)^2 + (c_2^3)^2 = 1 - \mu_2^2, \nonumber\\
& a^1c_1^1 + a^2c_1^2 + a^3c_1^3 = 0,                                     &  a^1c_2^1 + a^2c_2^2 + a^3c_2^3 = 0, \nonumber\\
& c_1^1\gamma_1^1 + c_1^2\gamma_1^2 + c_1^3\gamma_1^3 = 0,                & c_1^1\gamma_3^1 + c_1^2\gamma_3^2 + c_1^3\gamma_3^3 = 0, \nonumber\\
& c_2^1\gamma_2^1 + c_2^2\gamma_2^2 + c_2^3\gamma_2^3 = 0,                & c_2^1\gamma_4^1 + c_2^2\gamma_4^2 + c_2^3\gamma_4^3 = 0, \\
& (\gamma_1^1)^2 + (\gamma_1^2)^2 + (\gamma_1^3)^2 = \frac{1-\mu_1}{2},   & (\gamma_2^1)^2 + (\gamma_2^2)^2 + (\gamma_2^3)^2 = \frac{1-\mu_2}{2}, \nonumber\\
& (\gamma_3^1)^2 + (\gamma_3^2)^2 + (\gamma_3^3)^2 = \frac{1-\mu_1}{2},   & (\gamma_4^1)^2 + (\gamma_4^2)^2 + (\gamma_4^3)^2 = \frac{1-\mu_2}{2}, \nonumber\\
&\gamma_1^1\gamma_2^1 + \gamma_1^2\gamma_2^2 + \gamma_1^3\gamma_2^3 = 0,  & \gamma_3^1\gamma_4^1 + \gamma_3^2\gamma_4^2 + \gamma_3^3\gamma_4^3 = 0, \nonumber \\
&\gamma_1^1\gamma_3^1 + \gamma_1^2\gamma_3^2 + \gamma_1^3\gamma_3^3 = 0,  & \gamma_2^1\gamma_4^1 + \gamma_2^2\gamma_4^2 + \gamma_2^3\gamma_4^3 = 0, \nonumber
\end{align}
and
\begin{align}\label{w10.2}
& 2(a^1)^2 + 2(a^2)^2 + 2(a^3)^2 + c_1^1c_2^1 + c_1^2c_2^2 + c_1^3c_2^3 = 1 - \mu_1\mu_2, \nonumber\\
& 2a^1\gamma_1^1 + 2a^2\gamma_1^2 + 2a^3\gamma_1^3 + c_1^1\gamma_2^1 + c_1^2\gamma_2^1 + c_1^3\gamma_2^3 = 0, \nonumber \\
& 2a^1\gamma_3^1 + 2a^2\gamma_3^2 + 2a^3\gamma_3^3 + c_1^1\gamma_4^1 + c_1^2\gamma_4^2 + c_1^3\gamma_4^3 = 0, \\
& 2a^1\gamma_2^1 + 2a^2\gamma_2^2 + 2a^3\gamma_2^3 + c_2^1\gamma_1^1 + c_2^2\gamma_1^2 + c_2^3\gamma_1^3 = 0, \nonumber\\
& 2a^1\gamma_4^1 + 2a^2\gamma_4^2 + 2a^3\gamma_4^3 + c_2^1\gamma_3^1 + c_2^2\gamma_3^2 + c_2^3\gamma_3^3 = 0, \nonumber \\
&\gamma_1^1\gamma_4^1 + \gamma_1^2\gamma_4^2 + \gamma_1^3\gamma_4^3 + \gamma_2^1\gamma_3^1 + \gamma_2^2\gamma_3^2 + \gamma_2^3\gamma_3^3 = 0. \nonumber
\end{align}

We denote the vectors $\overline c_i = \left(c_i^1, c_i^2, c_i^3\right)$, for $i = 1$, $2$, $\overline a = \left(a^1, a^2, a^3\right)$ and $\overline \gamma_i = \left(\gamma_i^1, \gamma_i^2, \gamma_i^3\right)$, for $i = 1$, $2$, $3$, $4$. Then, the systems of Equations \eqref{w10.1}, and \eqref{w10.2} can be rewritten as:
\begin{align}\label{w10.3}
& |\overline c_1|^2 = 1 - \mu_1^2, & |\overline c_2|^2 = 1 - \mu_2^2, \nonumber \\
& \overline a \perp \overline c_1, & \overline a \perp \overline c_2, \nonumber \\
& \overline c_1 \perp \overline\gamma_1, & \overline c_1 \perp \overline\gamma_3, \nonumber\\
& \overline c_2 \perp \overline\gamma_2, & \overline c_2 \perp \overline\gamma_4,\\
& |\overline \gamma_1|^2 = \frac{1-\mu_1}{2}, & |\overline \gamma_3|^2 = \frac{1-\mu_1}{2}, \nonumber \\
& |\overline \gamma_2|^2 = \frac{1-\mu_2}{2}, & |\overline \gamma_4|^2 = \frac{1-\mu_2}{2}, \nonumber \\
& \overline\gamma_1 \perp \overline\gamma_2, & \overline\gamma_1 \perp \overline\gamma_3, \nonumber \\
& \overline\gamma_4 \perp \overline\gamma_2, & \overline\gamma_4 \perp \overline\gamma_3, \nonumber
\end{align}
and
\begin{align}\label{w10.4}
2|\overline a|^2 + \langle\overline c_1, \overline c_2\rangle = 1 - \mu_1\mu_2, \nonumber \\
2\langle\overline a, \overline\gamma_1\rangle + \langle\overline c_1, \overline\gamma_2\rangle = 0, \nonumber \\
2\langle\overline a, \overline\gamma_3\rangle + \langle\overline c_1, \overline\gamma_4\rangle = 0, \\
2\langle\overline a, \overline\gamma_2\rangle + \langle\overline c_2, \overline\gamma_1\rangle = 0, \nonumber \\
2\langle\overline a, \overline\gamma_4\rangle + \langle\overline c_2, \overline\gamma_3\rangle = 0, \nonumber \\
\langle\overline\gamma_1, \overline\gamma_4\rangle + \langle\overline\gamma_2,\overline\gamma_3\rangle = 0. \nonumber
\end{align}
Next, if we denote $\theta_1 = \sphericalangle(\overline\gamma_2, \overline\gamma_3)$ and $\theta_2 = \sphericalangle(\overline\gamma_1, \overline\gamma_4)$, then from
$$
\langle\overline\gamma_1, \overline\gamma_4\rangle + \langle\overline\gamma_2,\overline\gamma_3\rangle = 0,
$$
we obtain
$$
\cos\theta_1 + \cos\theta_2 = 0.
$$
Taking in consideration the orthogonality relations between the vectors $\overline\gamma_i$, we conclude that we have only two cases: $\theta_1 = 0$ and $\theta_2 = \pi$, or $\theta_1 = \pi$ and $\theta_2 = 0$.
We will discuss only the case $\theta_1 = 0$ and $\theta_2 = \pi$, since the other cases leads to the same result.

Acting with an isometry of $\r^3\equiv\{0\}\times\r^3$, the map $F$ keeps the same type of expression and we can assume that the vectors $\overline \gamma_i$ have the form
\begin{align*}
& \overline \gamma_1 = (1,0,0)\sqrt{\frac{1-\mu_1}{2}}, & \overline \gamma_2 = (0,1,0)\sqrt{\frac{1-\mu_2}{2}},\\
& \overline \gamma_3 = (0,1,0)\sqrt{\frac{1-\mu_1}{2}}, & \quad \overline \gamma_4 = (-1,0,0)\sqrt{\frac{1-\mu_2}{2}}.
\end{align*}
Since, $\overline c_1 \perp \overline\gamma_1$ and $\overline c_1 \perp \overline\gamma_3$, it follows that $\pm\overline c_1 = (0,0,1)\sqrt{1-\mu_1^2}$. Similarly, $\pm\overline c_1 = (0,0,1)\sqrt{1-\mu_2^2}$. From the equation $a\perp\overline c_1$, and from equations
$$
2\langle\overline a, \overline\gamma_1\rangle + \langle\overline c_1, \overline\gamma_2\rangle = 0 \ \textnormal{ and } \
2\langle\overline a, \overline\gamma_3\rangle + \langle\overline c_1, \overline\gamma_4\rangle = 0,
$$
we obtain that $\overline a = \overline 0$. Moreover, using
$$
2|\overline a|^2 + \langle\overline c_1, \overline c_2\rangle = 1 - \mu_1\mu_2
$$
it follows that the third components of the vectors $\overline c_i$ must have the same sign. We will take them to be positive since if we take them to be negative it will lead us to the same result. Also, by direct computations we obtain $\mu_1 = \mu_2 = \mu$.

Thus, the map $F$ takes the form
\begin{align}\label{functie}
F\left(x^1,x^2,y^1,y^2\right) =&  \left(\left(x^1\right)^2 + \left(x^2\right)^2 + \mu\left(y^1\right)^2  + \mu\left(y^2\right)^2, \sqrt{2(1-\mu)}\left(x^1y^1 - x^2y^2\right)\right. \nonumber \\
             &\left. \sqrt{2(1-\mu)}\left(x^1y^2 + x^2y^1\right), \sqrt{1-\mu^2}\left(\left(y^1\right)^2 + \left(y^2\right)^2\right)\right)
\end{align}

It is not difficult to see that by acting on $F$ with the isometry
\begin{align*}
  T = \begin{bmatrix}
             \sqrt{\frac{1+\mu}{2}} & 0 & 0 & \sqrt{\frac{1-\mu}{2}} \\
             0 & 1 & 0 & 0 \\
             0 & 0 & 1 & 0 \\
             -\sqrt{\frac{1-\mu}{2}} & 0 & 0 & \sqrt{\frac{1+\mu}{2}}
           \end{bmatrix},
\end{align*}
the map $\phi$ is nothing but the Hopf fibration $\psi:\s^3\rightarrow\s^2(a)$ followed by the inclusion of $\s^2(a)$, thought of as a small hypersphere of $\s^3$, into the ambient space. Here $a = \sqrt{(1-\mu)/2}$, $a\in(0,1]$.

The map $F$, given in \eqref{functie} is given by the matrices
\begin{align*}
  A_1 = \begin{bmatrix}
             1 & 0 & 0 & 0 \\
             0 & 1 & 0 & 0 \\
             0 & 0 & \mu & 0 \\
             0 & 0 & 0 & \mu
           \end{bmatrix},
\quad
  A_2 = \frac{\sqrt{2(1-\mu)}}{2}\begin{bmatrix}
             0 & 0 & 1 & 0 \\
             0 & 0 & 0 & -1 \\
             1 & 0 & 0 & 0 \\
             0 & -1 & 0 & 0
           \end{bmatrix},
\end{align*}
\begin{align*}
  A_3 = \frac{\sqrt{2(1-\mu)}}{2}\begin{bmatrix}
             0 & 0 & 0 & 1 \\
             0 & 0 & 1 & 0 \\
             0 & 1 & 0 & 0 \\
             1 & 0 & 0 & 0
           \end{bmatrix},
\quad
  A_4 = \pm\sqrt{1-\mu^2}\begin{bmatrix}
             0 & 0 & 0 & 0 \\
             0 & 0 & 0 & 0 \\
             0 & 0 & 1 & 0 \\
             0 & 0 & 0 & 1
           \end{bmatrix}.
\end{align*}
We can easily observe that
$$
A_1^2 + A_2^2 + A_3^2 + A_4^2 = (2 - \mu) I_4,
$$
and thus the map $F$ satisfies the conditions for Proposition \ref{p2}. In this case, $m = 3$ and $\alpha = 2 - \mu$. Since $\Dz F \neq 0,$ from the conditions $m + 5 = 4\alpha$ and $\left|\Dz F\right|^2 = 2(m + 1)^2$, we conclude that the map $\phi$ is proper biharmonic if and only if $\mu = 0$, i.e. $a = 1/\sqrt{2}$. For $\mu = 0$, the map $\phi$ is the one given in example \eqref{ec1}.

For \emph{Case II.3}, we denote
$$
(F^2,F^3,F^4) = B + 2G,
$$
where $B$ is an $\r^3$-valued quadratic form in $\overline y = (y^1,y^2,y^3)$ alone and $G$ is an $\r^3$-valued bilinear form in $x$ and $\overline y$, such that
\begin{align}\label{w11}
&|B|^2 + \left(\mu_1(y^1)^2 + \mu_2(y^2)^2 + \mu_3(y^3)^2\right)^2 = y^4,\nonumber\\
&\langle B,G \rangle = 0,\\
&2|G|^2 + x^2\left(\mu_1(y^1)^2 + \mu_2(y^2)^2 + \mu_3(y^3)^2\right) = x^2|y|^2,\nonumber
\end{align}
for any $(x, \overline y)\in\r^4$.

Thus, $B$ and $G$ are given by
\begin{align*}
B =& \left( c_1^1 \left(y^1\right)^2 + c_2^1 \left(y^2\right)^2 + c_3^1 \left(y^3\right)^2 + 2a_1^1 y^1y^2 + 2a_2^1 y^1y^3 + 2a_3^1 y^2y^3,\right.\\
   & \left. c_1^2 \left(y^1\right)^2 + c_2^2 \left(y^2\right)^2 + c_3^2 \left(y^3\right)^2 + 2a_1^2 y^1y^2 + 2a_2^2 y^1y^3 + 2a_3^2 y^2y^3,\right.\\
   & \left. c_1^3 \left(y^1\right)^2 + c_2^3 \left(y^2\right)^2 + c_3^3 \left(y^3\right)^2 + 2a_1^3 y^1y^2 + 2a_2^3 y^1y^3 + 2a_3^3 y^2y^3 \right)
\end{align*}
and
$$
G = \left( \gamma_1^1 xy^1 + \gamma_2^1xy^2 + \gamma_3^1 xy^3, \gamma_1^2 xy^1 + \gamma_2^2xy^2 + \gamma_3^2 xy^3, \gamma_1^3 xy^1 + \gamma_2^3xy^2 + \gamma_3^3 xy^3 \right)
$$
where $c_j^i$, $a_i^j$ and $\gamma_i^j$, for $i,j\in\{1,2,3\}$, are real constants.
With the above notation, from Equation \eqref{w11}, comparing the coefficients, we obtain
\begin{align}\label{w11.1}
& (c_1^1)^2 + (c_1^2)^2 + (c_1^3)^2 = 1 - \mu_1^2,                        & a_2^1c_1^1 + a_2^2c_1^2 + a_2^3c_1^3 = 0, \nonumber\\
& (c_3^1)^2 + (c_3^2)^2 + (c_3^3)^2 = 1 - \mu_3^2,                        & a_1^1c_1^1 + a_1^2c_1^2 + a_1^3c_1^3 = 0, \nonumber \\
& (c_2^1)^2 + (c_2^2)^2 + (c_2^3)^2 = 1 - \mu_2^2,                        & a_1^1c_2^1 + a_1^2c_2^2 + a_1^3c_2^3 = 0, \\
& a_3^1c_2^1 + a_3^2c_2^2 + a_3^3c_2^3 = 0,                               & a_3^1c_3^1 + a_3^2c_3^2 + a_3^3c_3^3 = 0, \nonumber \\
& a_2^1c_3^1 + a_2^2c_3^2 + a_2^3c_3^3 = 0,                               & \nonumber
\end{align}
\begin{align}\label{w11.2}
& 2(a_1^1)^2 + 2(a_1^2)^2 + 2(a_1^3)^2 + c_1^1c_2^1 + c_1^2c_2^2 + c_1^3c_2^3 = 1 -\mu_1\mu_2, \nonumber \\
& 2(a_2^1)^2 + 2(a_2^2)^2 + 2(a_2^3)^2 + c_1^1c_3^1 + c_1^2c_3^2 + c_1^3c_3^3 = 1 -\mu_1\mu_3, \nonumber \\
& 2(a_3^1)^2 + 2(a_3^2)^2 + 2(a_3^3)^2 + c_2^1c_3^1 + c_2^2c_3^2 + c_2^3c_3^3 = 1 -\mu_2\mu_3, \\
& a_3^1c_1^1 + a_3^2c_1^2 + a_3^3c_1^3 + a_1^1a_2^1 + a_1^2a_2^2 + a_1^3a_2^3 = 0, \nonumber \\
& a_1^1c_3^1 + a_1^2c_3^2 + a_1^3c_3^3 + a_2^1a_3^1 + a_2^2a_3^2 + a_2^3a_3^3 = 0, \nonumber\\
& a_2^1c_2^1 + a_2^2c_2^2 + a_2^3c_2^3 + a_1^1a_3^1 + a_1^2a_3^2 + a_1^3a_3^3 = 0, \nonumber
\end{align}
\begin{align}\label{w11.3}
&(\gamma_1^1)^2 + (\gamma_1^2)^2 + (\gamma_1^3)^2 = \frac{1-\mu_1}{2},   & c_2^1\gamma_2^1 + c_2^2\gamma_2^2 + c_2^3\gamma_2^3 = 0, \nonumber\\
&(\gamma_2^1)^2 + (\gamma_2^2)^2 + (\gamma_2^3)^2 = \frac{1-\mu_2}{2},   & c_1^1\gamma_1^1 + c_1^2\gamma_1^2 + c_1^3\gamma_1^3 = 0, \nonumber\\
&(\gamma_3^1)^2 + (\gamma_3^2)^2 + (\gamma_3^3)^2 = \frac{1-\mu_3}{2},   & c_3^1\gamma_3^1 + c_3^2\gamma_3^2 + c_3^3\gamma_3^3 = 0, \\
&\gamma_1^1\gamma_2^1 + \gamma_1^2\gamma_2^2 + \gamma_1^3\gamma_2^3 = 0, & \gamma_2^1\gamma_3^1 + \gamma_2^2\gamma_3^2 + \gamma_2^3\gamma_3^3 = 0, \nonumber \\
&\gamma_1^1\gamma_3^1 + \gamma_1^2\gamma_3^2 + \gamma_1^3\gamma_3^3 = 0, &  \nonumber
\end{align}
and
\begin{align}\label{w11.4}
& 2a_1^1\gamma_1^1 + 2a_1^2\gamma_1^2 + 2a_1^3\gamma_1^3 + c_1^1\gamma_2^1 + c_1^2\gamma_2^1 + c_1^3\gamma_2^3 = 0 \nonumber \\
& 2a_2^1\gamma_1^1 + 2a_2^2\gamma_1^2 + 2a_2^3\gamma_1^3 + c_1^1\gamma_3^1 + c_1^2\gamma_3^1 + c_1^3\gamma_3^3 = 0 \nonumber \\
& 2a_1^1\gamma_2^1 + 2a_1^2\gamma_2^2 + 2a_1^3\gamma_2^3 + c_2^1\gamma_1^1 + c_2^2\gamma_1^1 + c_2^3\gamma_1^3 = 0 \nonumber \\
& 2a_3^1\gamma_2^1 + 2a_3^2\gamma_2^2 + 2a_3^3\gamma_2^3 + c_2^1\gamma_3^1 + c_2^2\gamma_3^1 + c_2^3\gamma_3^3 = 0 \\
& 2a_2^1\gamma_3^1 + 2a_2^2\gamma_3^2 + 2a_2^3\gamma_3^3 + c_3^1\gamma_1^1 + c_3^2\gamma_1^1 + c_3^3\gamma_1^3 = 0 \nonumber \\
& 2a_3^1\gamma_3^1 + 2a_3^2\gamma_3^2 + 2a_3^3\gamma_3^3 + c_3^1\gamma_2^1 + c_3^2\gamma_2^1 + c_3^3\gamma_2^3 = 0 \nonumber \\
& a_3^1\gamma_1^1 + a_3^2\gamma_1^2 + a_3^3\gamma_1^3 + a_2^1\gamma_2^1 + a_2^2\gamma_2^2 + a_2^3\gamma_2^3 + a_1^1\gamma_3^1 + a_1^2\gamma_3^2 + a_1^3\gamma_3^3 = 0 \nonumber
\end{align}

We denote the vectors $\overline c_i = \left(c_i^1, c_i^2, c_i^3\right)$, $\overline a_i = \left(a_i^1, a_i^2, a_i^3\right)$ and $\overline \gamma_i = \left(\gamma_i^1, \gamma_i^2, \gamma_i^3\right)$, for $i = 1$, $2$, $3$. Then, the systems of Equations \eqref{w11.1}, \eqref{w11.2}, \eqref{w11.3} and \eqref{w11.4} can be rewritten as:
\begin{align}\label{w12.1}
&|\overline c_1|^2 = 1 - \mu_1^2, & |\overline c_2|^2 = 1 - \mu_2^2, \nonumber\\
&|\overline c_3|^2 = 1 - \mu_3^2, & \nonumber\\
&\overline a_1 \perp \overline c_1, & \overline a_1 \perp \overline c_2, \\
&\overline a_2 \perp \overline c_1, & \overline a_2 \perp \overline c_3, \nonumber\\
&\overline a_3 \perp \overline c_2, & \overline a_3 \perp \overline c_3, \nonumber
\end{align}

\begin{align}\label{w12.2}
&2|\overline a_1|^2 + \langle\overline c_1, \overline c_2\rangle  = 1 - \mu_1\mu_2, & \langle\overline a_3, \overline c_1\rangle + \langle\overline a_1, \overline a_2\rangle = 0, \nonumber \\
&2|\overline a_2|^2 + \langle\overline c_1, \overline c_3\rangle  = 1 - \mu_1\mu_3, & \langle\overline a_1, \overline c_3\rangle + \langle\overline a_2, \overline a_3\rangle = 0, \\
&2|\overline a_3|^2 + \langle\overline c_2, \overline c_3\rangle  = 1 - \mu_2\mu_3, & \langle\overline a_2, \overline c_2\rangle + \langle\overline a_1, \overline a_3\rangle = 0, \nonumber
\end{align}

\begin{align}\label{w12.3}
&2\langle\overline a_1, \overline \gamma_1\rangle + \langle\overline c_1, \overline \gamma_2\rangle = 0, &2\langle\overline a_2, \overline \gamma_1\rangle + \langle\overline c_1, \overline \gamma_3\rangle = 0, \nonumber\\
&2\langle\overline a_1, \overline \gamma_2\rangle + \langle\overline c_2, \overline \gamma_1\rangle = 0, &2\langle\overline a_3, \overline \gamma_2\rangle + \langle\overline c_2, \overline \gamma_3\rangle = 0,\\
&2\langle\overline a_2, \overline \gamma_3\rangle + \langle\overline c_3, \overline \gamma_1\rangle = 0, &2\langle\overline a_3, \overline \gamma_3\rangle + \langle\overline c_3, \overline \gamma_2\rangle = 0,\nonumber\\
&\langle\overline a_3, \overline \gamma_1\rangle + \langle\overline a_2, \overline \gamma_2\rangle + \langle\overline a_1, \overline \gamma_3\rangle = 0, \nonumber
\end{align}

\begin{align}\label{w12.4}
&|\overline \gamma_1|^2 = \frac{1-\mu_1}{2}, & |\overline \gamma_2|^2 = \frac{1-\mu_2}{2}, \nonumber\\
&|\overline \gamma_3|^2 = \frac{1-\mu_3}{2}, &  \nonumber\\
&\overline \gamma_1 \perp \overline \gamma_2, & \overline \gamma_2 \perp \overline \gamma_3, \\
&\overline c_1 \perp \overline \gamma_1, & \overline \gamma_1 \perp \overline \gamma_3, \nonumber\\
&\overline c_2 \perp \overline \gamma_2, &\overline c_3 \perp \overline \gamma_3. \nonumber
\end{align}

Acting with an isometry of $\r^3\equiv\{0\}\times\r^3$, the map $F$ keeps the same type of expression and we can assume that the vectors $\overline \gamma_i$ have the form
$$
\overline \gamma_1 = \sqrt{\frac{1-\mu_1}{2}}(1,0,0), \quad \overline \gamma_2 = \sqrt{\frac{1-\mu_2}{2}}(0,2,0), \quad \overline \gamma_3 = \sqrt{\frac{1-\mu_3}{2}}(0,0,1),
$$
and thus
$$
\overline c_1 = \left(0, c_1^2, c_1^3\right), \quad \overline c_2 = \left(c_2^1, 0, c_2^3\right), \quad \overline c_3 = \left(c_3^1, c_3^2, 0\right).
$$
At this point we distinguish the following subcases:
\begin{itemize}
  \item[i)] $\mu_1 \neq -1$, $\mu_2 \neq -1$ and $\mu_3 \neq -1$,
  \item[ii)] $\mu_1 = -1$, $\mu_2 \neq -1$ and $\mu_3 \neq -1$ (and other permutations of this type),
  \item[iii)] $\mu_1 = \mu_2 = -1$ and $\mu_3 \neq -1$ (and other permutations of this type),
  \item[iv)] $\mu_1 = \mu_2 = \mu_3 = -1$.
\end{itemize}

For \emph{Case II.3.i}, it is easy to prove that $\overline a_i \neq 0$, for $i = 1$, $2$, $3$.
Next, since $\mu_i \neq 0$, it follows from system \eqref{w12.1} that we can rewrite $\overline c_k$ as
\begin{align*}
  \overline c_1 =& \sqrt{1-\mu_1^2}(0,\sin\theta_1,\cos\theta_1), \\
  \overline c_2 =& \sqrt{1-\mu_2^2}(\sin\theta_2,0,\cos\theta_2), \\
  \overline c_3 =& \sqrt{1-\mu_3^2}(\sin\theta_3,\cos\theta_3,0).
\end{align*}
Using Equations \eqref{w12.3}, by direct computations, we get:
\begin{align*}
& a_1^1 = -\frac{1}{2}\sin\theta_1\sqrt{1+\mu_1}\sqrt{1-\mu_2}, & a_1^2 = -\frac{1}{2}\sin\theta_2\sqrt{1+\mu_2}\sqrt{1-\mu_1}, \\
& a_2^1 = -\frac{1}{2}\cos\theta_1\sqrt{1+\mu_1}\sqrt{1-\mu_3}, & a_2^3 = -\frac{1}{2}\sin\theta_3\sqrt{1+\mu_3}\sqrt{1-\mu_1}, \\
& a_3^2 = -\frac{1}{2}\cos\theta_2\sqrt{1+\mu_2}\sqrt{1-\mu_3}, & a_3^3 = -\frac{1}{2}\cos\theta_3\sqrt{1+\mu_3}\sqrt{1-\mu_2}.
\end{align*}
If we suppose that $\sin\theta_i \neq 0$ and $\cos\theta_i \neq  0$, for any $i = 1$, $2$, $3$, then from system \eqref{w12.1}, we get
\begin{align}\label{w12.5}
  a_1^3 = &  \frac{\sin\theta_1\sin\theta_2}{2\cos\theta_2}\sqrt{1+\mu_1}\sqrt{1-\mu_2} = \frac{\sin\theta_1\sin\theta_2}{2\cos\theta_1}\sqrt{1+\mu_2}\sqrt{1-\mu_1}, \nonumber\\
  a_2^2 = & \frac{\sin\theta_3\cos\theta_1}{2\sin\theta_1}\sqrt{1+\mu_3}\sqrt{1-\mu_1} = \frac{\sin\theta_3\cos\theta_1}{2\cos\theta_3}\sqrt{1+\mu_1}\sqrt{1-\mu_3},\\
  a_3^1 = & \frac{\cos\theta_2\cos\theta_3}{2\sin\theta_2}\sqrt{1+\mu_3}\sqrt{1-\mu_2} = \frac{\cos\theta_2\cos\theta_3}{2\sin\theta_3}\sqrt{1+\mu_2}\sqrt{1-\mu_3}.\nonumber
\end{align}
From the above equalities, it follows that
\begin{align}\label{w13}
  \frac{\sqrt{1+\mu_1}\sqrt{1-\mu_2}}{\cos\theta_2} = & \frac{\sqrt{1+\mu_2}\sqrt{1-\mu_1}}{\cos\theta_1}, \nonumber\\
  \frac{\sqrt{1+\mu_3}\sqrt{1-\mu_1}}{\sin\theta_1} = & \frac{\sqrt{1+\mu_1}\sqrt{1-\mu_3}}{\cos\theta_3}, \\
  \frac{\sqrt{1+\mu_1}\sqrt{1-\mu_2}}{\cos\theta_2} = & \frac{\sqrt{1+\mu_2}\sqrt{1-\mu_1}}{\cos\theta_1}. \nonumber
\end{align}
Further, using Equations \eqref{w12.2} and \eqref{w13}, we get
\begin{align*}
  |\overline a_1|^2 = & \frac{1}{4}(1+\mu_2)(1-\mu_1)\left(\frac{\sin^2\theta_1}{\cos^2\theta_1} + \sin^2\theta_2\right), \\
  |\overline a_2|^2 = & \frac{1}{4}(1+\mu_1)(1-\mu_2)\left(\frac{\sin^2\theta_3}{\cos^2\theta_3} + \cos^2\theta_1\right),\\
  |\overline a_3|^2 = & \frac{1}{4}(1+\mu_2)(1-\mu_3)\left(\frac{\cos^2\theta_3}{\cos^2\theta_3} + \cos^2\theta_2\right).
\end{align*}

Using Equation \eqref{w12.5}, from
$$
2|\overline a_1|^2 + \langle\overline c_1, \overline c_2\rangle  = 1 - \mu_1\mu_2,
$$
see \eqref{w12.2}, we get
$$
\frac{1}{2}(1 + \mu_2)(1 - \mu_1)\left(\frac{1}{\cos^2\theta_1} + \cos^2\theta_2\right) = 1 - \mu_1\mu_2.
$$
Using again \eqref{w12.5}, we obtain the equation
$$
(1 + \mu_2)(1 - \mu_1)\cos^4\theta_2 - 2(1 - \mu_1\mu_2)\cos^2\theta_2 + (1 + \mu_1)(1 - \mu_2) = 0,
$$
which, for $\mu_2 > \mu_1$, has the solution
$$
\cos^2\theta_2 = \frac{(1 + \mu_1)(1 - \mu_2)}{(1 - \mu_1)(1 + \mu_2)}.
$$
In a similar way we obtain from \eqref{w12.3}
\begin{align*}
\frac{1}{2}(1 + \mu_2)(1 - \mu_3)\left(\frac{1}{\sin^2\theta_3} + \sin^2\theta_2\right) = 1 - \mu_2\mu_3,\\
\frac{1}{2}(1 + \mu_1)(1 - \mu_3)\left(\frac{1}{\cos^2\theta_3} + \sin^2\theta_1\right) = 1 - \mu_1\mu_3.
\end{align*}
and using Equations \eqref{w12.5}, for $\mu_2 > \mu_1 > \mu_3$ it follows that
$$
\sin^2\theta_2 = \frac{(1 - \mu_2)(1 + \mu_3)}{(1 + \mu_2)(1 - \mu_3)} \ \textnormal{ and } \ \sin^2\theta_1 = \frac{(1 - \mu_1)(1 + \mu_3)}{(1 + \mu_1)(1 - \mu_3)}.
$$
Using again Equations \eqref{w12.5}, we get
$$
\sin^2\theta_3 = \cos^2\theta_1 = \cos^2\theta_3 = 1
$$
which is in contradiction to our supposition that $\sin\theta_i \neq 0$ and $\cos\theta_i \neq  0$, for any $i = 1$, $2$, $3$.

If $\sin\theta_1 = 0,$ then
$$
\overline c_1 = \sqrt{1-\mu_1^2}(0,0,1).
$$
As $\overline c_1 \perp \overline a_1$ and $\overline c_1 \perp \overline a_2$ it follows that $a_1^3 = a_2^3 = 0$.
Since
$$
2\langle\overline a_2, \overline\gamma_3\rangle + \langle\overline c_3, \overline\gamma_1\rangle = 0,
$$
and
$$
2\langle\overline a_1, \overline\gamma_1\rangle + \langle\overline c_1, \overline\gamma_2\rangle = 0,
$$
we get that $\sin\theta_3 = 0$ and $a_1^1 = 0$.
As $\overline c_3 \perp \overline a_2$ and $\overline c_3 \perp \overline a_3$ it follows that $a_2^2 = a_3^2 = 0$.
Now, the vectors $\overline a_i$ have the following form
$$
\overline a_1 = \left(0,a_1^2, 0\right), \ \overline a_2 = \left(a_2^1,0,0\right) \ \textnormal{ and } \ \overline a_3 = \left(a_3^1,0,a_3^3\right).
$$
From the equation
$$
\langle\overline a_2, \overline c_2\rangle + \langle \overline a_1, \overline a_3\rangle = 0,
$$
see \eqref{w12.2}, since $\overline a_2 \neq \overline 0$, it follows that $\sin\theta_2 = 0$.
Using
$$
\langle\overline a_3, \overline \gamma_1\rangle + \langle\overline a_2, \overline \gamma_2\rangle + \langle\overline a_1, \overline \gamma_3\rangle = 0,
$$
see \eqref{w12.3}, we obtain $a_3^1 = 0$. Finally, since $\overline a_3 \perp \overline c_2$ it follows that $a_3^3\cos\theta_2 = 0$, but this is a contradiction considering that none of the factors can be zero. Thus our supposition that $\sin\theta_1 = 0$ is false.

The proof for the cases $\sin\theta_2 = 0$, or $\sin\theta_3 = 0$, or $\cos\theta_1 = 0$, or $\cos\theta_2 = 0$, or $\cos\theta_3 = 0$, follows in the same manner.

In conclusion, the Case i) leads us to contradiction.

For \emph{Case II.3.ii}, we immediately see that $\overline c_1 = \overline 0$, and thus, from Equations \eqref{w12.3} we get
$$
\overline a_1 \perp \overline a_2, \quad \overline a_1 \perp \overline\gamma_1, \quad \overline a_2 \perp \overline\gamma_1,
$$
and therefore $a_1^1 = a_2^1 = 0$.
Next, from Equations \eqref{w12.2} we get that $\overline a_1 \neq \overline 0$ and $\overline a_2 \neq \overline 0$.
From Equations \eqref{w12.3}, we immediately obtain that
\begin{align*}
& a_1^2 = -\sqrt{\frac{1+\mu_2}{2}}\sin\theta_2, & a_3^2 = -\frac{1}{2}\cos\theta_2\sqrt{1-\mu_3}\sqrt{1+\mu_2},\\
& a_2^3 = -\sqrt{\frac{1+\mu_3}{2}}\sin\theta_3, & a_3^3 = -\frac{1}{2}\cos\theta_3\sqrt{1-\mu_2}\sqrt{1+\mu_3},
\end{align*}
and, from Equations \eqref{w12.2}, we get
$$
\left(a_2^2\right)^2 = \frac{1+\mu_3}{2}\cos^2\theta_3 \ \textnormal{ and } \ \left(a_1^3\right)^2 = \frac{1+\mu_2}{2}\cos^2\theta_2.
$$
Since $\overline a_1 \perp \overline c_2$ and $\overline a_2 \perp \overline c_3$, it follows that $\cos\theta_2 = \cos\theta_3 = 0$. Using the last equation of system \eqref{w12.3}, we immediately get $a_3^1 = 0$.
From the equation
$$
2|\overline a_3|^2 + \langle\overline c_2, \overline c_3\rangle  = 1 - \mu_2\mu_3,
$$
we obtain that the third components of the vectors $\overline c_2$ and $\overline c_3$ must have the same sign, and, by direct computations we get $\mu_2 = \mu_3 = \mu$. Thus, the map $F$ takes the following form
\begin{align*}
F&\left(x,y^1,y^2,y^3\right) = \\
=& \left( x^2 - \left(y^1\right)^2 + \mu \left(y^2\right)^2 + \mu\left(y^3\right)^2, \sqrt{1-\mu^2}\left(y^2\right)^2 + \sqrt{1-\mu^2}\left(y^3\right)^2 + 2xy^1, \right. \\
& \left.-\sqrt{2(1+\mu)}y^1y^2 + \sqrt{2(1-\mu)}xy^2, -\sqrt{2(1+\mu)}y^1y^3 + \sqrt{2(1-\mu)}xy^3\right).
\end{align*}

As $\left|\dz F\right|^2$ is invariant under orthogonal linear transformations of $\r^4$, and looking at the squared norm of the differential of the Hopf construction $\s^3\rightarrow\s^3$, we can perform the following orthogonal transformation on the domain of $F$
\begin{align*}
  T = \begin{bmatrix}
             \sqrt{\frac{1-\mu}{2}} & -\sqrt{\frac{1+\mu}{2}} & 0 & 0 \\
             \sqrt{\frac{1+\mu}{2}} & \sqrt{\frac{1-\mu}{2}} & 0 & 0 \\
             0 & 0 & 1 & 0 \\
             0 & 0 & 0 & 1
      \end{bmatrix}.
\end{align*}
In this case,
\begin{align*}
\left(F\circ T^{-1}\right)&\left(\tilde x,\tilde y^1,\tilde y^2,\tilde y^3\right) = \\
= & \left( -\mu\tilde x^2 + \mu\left(\tilde y^1\right)^2 + \mu\left(\tilde y^2\right)^2 + \mu\left(\tilde y^3\right)^2 + 2\sqrt{1-\mu^2}\tilde x\tilde y^1,\right. \\
& \left. -\sqrt{1-\mu^2}\tilde x^2 + \sqrt{1-\mu^2}\left(\tilde y^1\right)^2 + \sqrt{1-\mu^2}\left(\tilde y^2\right)^2 + \sqrt{1-\mu^2}\left(\tilde y^3\right)^2 - 2\mu\tilde x\tilde y^1,\right. \\
& \left. 2\tilde x\tilde y^2, 2\tilde x\tilde y^3\right)
\end{align*}
Next we can apply the following linear orthogonal transformation on the components of $F\circ T^{-1}$
\begin{align*}
  T_1 = \begin{bmatrix}
             \mu & \sqrt{1-\mu^2} & 0 & 0 \\
             -\sqrt{1-\mu^2} & \mu & 0 & 0 \\
             0 & 0 & 1 & 0 \\
             0 & 0 & 0 & 1
      \end{bmatrix},
\end{align*}
and we obtain the map
\begin{align*}
\left(T_1\circ F\circ T^{-1}\right)\left(\tilde x,\tilde y^1,\tilde y^2,\tilde y^3\right) = & \left( -\tilde x^2 + \left(\tilde y^1\right)^2 + \left(\tilde y^2\right)^2 + \left(\tilde y^3\right)^2, - 2\tilde x\tilde y^1, 2\tilde x\tilde y^2, 2\tilde x\tilde y^3\right)
\end{align*}
which is the Hopf construction $\s^3\rightarrow\s^3$.
As in the \emph{Case II.1}, we conclude that $F$ cannot be biharmonic.

For \emph{Case II.3.iii}, from Equations \eqref{w12.1} we get
$$
\overline c_1 = \overline c_2 = \overline 0 \ \textnormal{ and } \ \overline c_3 = \sqrt{1-\mu_3^2}(\sin\theta,\cos\theta,0).
$$
From Equations \eqref{w12.2} and \eqref{w12.3} we immediately obtain
$$
\overline a_1 \perp \overline a_2, \ \overline a_1 \perp \overline a_3,
$$
and
$$
\overline a_1 \perp \overline\gamma_1, \ \overline a_1 \perp \overline\gamma_2, \ \overline a_2 \perp \overline\gamma_1, \ \overline a_3 \perp \overline\gamma_2.
$$
Now, the vectors $\overline a_i$ have the following form
$$
\overline a_1 = \left(0,0,a_1^3\right), \ \overline a_2 = \left(0,a_2^2,0\right) \ \textnormal{ and } \ \overline a_3 = \left(a_3^1,0,0\right).
$$
From equations
$$
2\langle\overline a_2, \overline \gamma_3\rangle + \langle\overline c_3, \overline \gamma_1\rangle = 0 \ \textnormal{ and } \ 2\langle\overline a_3, \overline \gamma_3\rangle + \langle\overline c_3, \overline \gamma_2\rangle = 0,
$$
see \eqref{w12.3}, we get that $\sin\theta = \cos\theta = 0$ which is a contradiction, since $\overline c_3 \neq \overline 0.$

For \emph{Case II.3.iv}, from Equations \eqref{w12.1}, \eqref{w12.2}, \eqref{w12.3} and \eqref{w12.4} we have
$$
\overline c_1 = \overline c_2 = \overline c_3 = \overline 0, \ |\overline a_1| = |\overline a_2| = |\overline a_3| = 1,
$$
$$
\overline \gamma_1 = (1,0,0), \ \overline \gamma_2 = (0,1,0), \ \overline \gamma_3 = (0,0,1),
$$
$$
\overline a_1 \perp \overline\gamma_1, \ \overline a_1 \perp \overline\gamma_2, \ \overline a_2 \perp \overline\gamma_1, \ \overline a_2 \perp \overline\gamma_3, \ \overline a_3 \perp \overline\gamma_2, \ \overline a_3 \perp \overline\gamma_3.
$$
These equations give a simple form for the vectors $\overline a_i$, but, further, the vectors $\overline a_i$ do not satisfy the equation
$$
\langle\overline a_3, \overline \gamma_1\rangle + \langle\overline a_2, \overline \gamma_2\rangle + \langle\overline a_1, \overline \gamma_3\rangle = 0,
$$
see \eqref{w12.3}. Thus, this subcase also leads us to a contradiction.

\emph{Case III} $m\geq 4$.

\noindent For $m > 5$, we use the result in \cite{W68} that says that any quadratic form from $\s^m$ to $\s^n$ with $m \geq 2n$ is constant. It follows that the only interesting situations are given by $m = 4$, $5$. But these cases are excluded by two results of \cite{C98}. The first one says that if $\phi:\s^{2n-1}\rightarrow\s^n$ is a non-constant map, then $n\in\{1,2,4,8\}$, and the second one says that if $\phi:\s^{2n-2}\rightarrow\s^n$ $(n \geq 2)$ is a non-constant map, then $n\in\{2,4,8\}$.
\end{proof}

\vspace{0.5cm}

All results obtained up to now suggest the following:

\vspace{0.3cm}
\noindent\textbf{\emph{Open Problem.}} If $\phi:\s^m\rightarrow\s^n$ is a proper biharmonic quadratic form then, up to an isometry of $\s^n$, the first $n$ components of $\phi$ are harmonic polynomials on $\r^{m+1}$ and form a map $\psi:\s^m\rightarrow\s^{n-1}(1/\sqrt{2})$.
\vspace{0.3cm}

\end{document}